\DeclarePairedDelimiter\ceil{\lceil}{\rceil}
\definecolor{tealgreen}{HTML}{1B9E77}
\definecolor{orange}{HTML}{D95F02}
\definecolor{purple}{HTML}{7570B3}
\definecolor{pink}{HTML}{E7298A}
\definecolor{grassgreen}{HTML}{66A61E}
\definecolor{goldyellow}{HTML}{E6AB02}
\definecolor{brown}{HTML}{A6761D}
\definecolor{devilgray}{HTML}{666666}
\theoremstyle{plain}
\newtheorem{thm}{Theorem}[section]
\newtheorem{lemma}[thm]{Lemma}
\newtheorem{prop}[thm]{Proposition}
\newtheorem{conjecture}{Conjecture}  
\newtheorem{conj}[conjecture]{Conjecture}
\theoremstyle{definition}
\newtheorem{defn}[thm]{Definition}
\newtheorem{example}[thm]{Example}
\newtheorem{rmk}[thm]{Remark}
\theoremstyle{remark}
\newcommand{\BA}{{\mathbb{A}}}
\newcommand{\BC}{{\mathbb{C}}}
\newcommand{\BH}{{\mathbb{H}}}
\newcommand{\BP}{{\mathbb{P}}}
\newcommand{\BQ}{{\mathbb{Q}}}
\newcommand{\BR}{{\mathbb{R}}}
\newcommand{\BZ}{{\mathbb{Z}}}
\newcommand{\CE}{{\mathcal E}}
\newcommand{\CF}{{\mathcal F}}
\newcommand{\CN}{{\mathcal N}}
\newcommand{\CO}{{\mathcal O}}
\newcommand{\CP}{{\mathcal P}}
\newcommand{\CX}{{\mathcal X}}
\newcommand{\Fg}{{\mathfrak{g}}}
\newcommand{\Fp}{{\mathfrak{p}}}
\newcommand{\Fs}{{\mathfrak{s}}}
\newcommand{\Fz}{{\mathfrak{z}}}
\newcommand{\pt}{{\mathsf{p}}}
\DeclareMathOperator{\Hilb}{Hilb}
\newcommand{\ZmodN}[1]{\BZ_{#1}}
\newcommand{\vir}{\mathsf{vir}}
\newcommand{\fix}{\mathsf{fix}}
\renewcommand{\vert}{\mathsf{vert}}
\newcommand{\diag}{\mathsf{diag}}
\newcommand{\Vsf}{\mathsf{V}}
\newcommand{\Vtilde}{\widetilde{\mathsf{V}}}
\newcommand{\Quot}{\operatorname{Quot}}
\newcommand{\Elambda}{\mathsf{E}_{\lambda }}
\DeclareFontFamily{OT1}{rsfs}{}
\DeclareFontShape{OT1}{rsfs}{n}{it}{<-> rsfs10}{}
\DeclareMathAlphabet{\curly}{OT1}{rsfs}{n}{it}
\newcommand{\p}{\mathbb{P}}
\newcommand\Coker{\operatorname{Coker}}
\newcommand*\dd{\mathop{}\!\mathrm{d}}
\newcommand{\Mbar}{{\overline M}}
\newcommand{\Coh}{\mathrm{Coh}}
\newcommand{\Pic}{\mathop{\rm Pic}\nolimits}
\newcommand{\PT}{\mathsf{PT}}
\newcommand{\DT}{\mathsf{DT}}
\newcommand{\Sym}{{\mathrm{Sym}}}
\newcommand{\ev}{{\mathrm{ev}}}
\newcommand{\NS}{\mathrm{NS}}
\newcommand{\Null}{\mathrm{Null}}
\newcommand{\id}{{\mathrm{id}}}
\newcommand{\Jac}{\mathrm{Jac}}
\newcommand{\Mod}{\mathrm{Mod}}
\newcommand{\SL}{{\mathrm{SL}}}
\newcommand{\GL}{{\mathrm{GL}}}
\newcommand{\Sp}{{\mathrm{Sp}}}
\newcommand{\Ell}{\mathcal{E}ll}
\begin{document}
\baselineskip=16pt
\title[CHL Calabi--Yau threefolds]{CHL Calabi--Yau threefolds: Curve counting, Mathieu moonshine and Siegel modular forms}
\date{\today}

\author[J.~Bryan]{Jim Bryan}
\address{University of British Columbia, Department of Mathematics}
\email{jbryan@math.ubc.ca}

\author[G.~Oberdieck]{Georg Oberdieck}
\address{Mathematisches Institut, Universit\"at Bonn}
\email{georgo@math.uni-bonn.de}

\begin{abstract}
A CHL model is the quotient of $\mathrm{K3} \times E$ by an order $N$ automorphism
which acts symplectically on the K3 surface and acts by shifting by an $N$-torsion point on the elliptic curve $E$.
We conjecture that the primitive Donaldson--Thomas partition function of elliptic CHL models
is a Siegel modular form, namely the Borcherds lift of the corresponding twisted-twined elliptic genera which appear in Mathieu moonshine.
The conjecture matches predictions of string theory by David, Jatkar and Sen.
We use the topological vertex to prove several base cases of the conjecture.
Via a degeneration to $\mathrm{K3} \times \p^1$
we also express the DT partition functions as a twisted trace of an operator on Fock space.
This yields further computational evidence.
An extension of the conjecture to non-geometric CHL models is discussed.

We consider CHL models of order $N=2$ in detail. We conjecture a formula
for the Donaldson--Thomas invariants of all order two CHL models in all curve classes.
The conjecture is formulated in terms of two Siegel modular forms.
One of them, a Siegel form for the Iwahori subgroup, has to our knowledge not yet appeared in physics.
This discrepancy is discussed in an appendix with Sheldon Katz.
\end{abstract}
\maketitle

\vspace{-20pt}
\setcounter{tocdepth}{1} 
\tableofcontents
\setcounter{section}{-1}

\section{Introduction}
In this paper we conjecture a connection between Mathieu moonshine
and the enumerative geometry of algebraic curves in certain Calabi--Yau threefolds.
The connection is motivated by physics, and part of our conjecture
can be understood as a mathematical formulation of a prediction by heterotic duality.
However, the connection in general is more subtle than what has been suggested
and new input appears on the curve counting side.

\subsection{Mathieu moonshine}
Eguchi, Ooguri, and Tachi\-kawa \cite{EOT} noted
that the coefficients of the Fourier expansion of the elliptic genus of a K3 surface
\[ \Ell(K3)(\tau,z) = 8 \left[ \left( \frac{\theta_2(\tau,z)}{\theta_2(\tau,0)} \right)^2 
+ \left( \frac{\theta_3(\tau,z)}{\theta_3(\tau,0)} \right)^2 
+\left( \frac{\theta_4(\tau,z)}{\theta_4(\tau,0)} \right)^2 
\right]
\]
can be decomposed into dimensions of representations of the Mathieu group $M_{24}$
times characters of the $N{=}4$ super conformal algebra.
This observation, called Mathieu moonshine, was proven recently by Gannon \cite{Gan}.
By Gaberdiel et all \cite{GPRV} the decomposition of the elliptic genus may be used to define for every pair of commuting elements $g, h \in M_{24}$
the $g$-twisted $h$-twined elliptic genus
\[ \Ell_{g,h}(K3)(\tau,z). \]
Just as $\Ell(K3)$ the genera $\Ell_{g,h}(K3)$ are Jacobi forms \cite{EZ}. 
Roughly, twining correspond to replacing dimensions of $M_{24}$ representations by traces over $h$.\footnote{The twining genera
$\Ell_{g=\text{id},h}(K3)$ are the analogs of the McKay--Thompson series which appear in
Monster moonshine.
}
Twisting is a certain orbifolding process.
Here we treat the construction of twisted-twined elliptic genera as a blackbox and instead
will list them explicitly whenever we need them.

\subsection{CHL Calabi--Yau threefolds}
Let $S$ be a non-singular projective K3 surface endowed with a symplectic automorphism
\[ g : S \to S \]
of finite order $N$.
Let $E$ be a non-singular elliptic curve and let $e_0 \in E$ be a $N$-torsion point.
The group 
\[ \BZ_N = \BZ/ N \BZ \]
 acts on the product $S \times E$ by the map
\[ (s,e) \mapsto (gs, e + e_0).
\]
The Chaudhuri--Hockney--Lykken (CHL) model associated to $g$ is the quotient
\[ X = (S \times E)/\BZ_N. \]
Since $\BZ_N$ acts freely and preserves the Calabi--Yau form,
$X$ is a non-singular projective Calabi--Yau threefold.\footnote{
After pullback to $S \times E$,
the holomorphic symplectic form on $S$
as well as the holomorphic $1$-form on $E$ descend to $X$.}
The elliptic curve $E$ acts on the product $S \times E$ by translation in the second factor.
This action descends to an $E$-action on $X$.

\subsection{Donaldson--Thomas theory}
Let $\Hilb^n(X,\beta)$ be the Hilbert scheme of $1$-dimensional subschemes $Z \subset X$
satisfying
\[ [ Z ] = \beta \in H_2(X,\BZ), \quad \chi(\CO_Z) = n \in \BZ. \]
The action of the elliptic curve $E$ on $X$ induces an action on the Hilbert scheme.
Hence (almost) every curve or subscheme on $X$ comes in the $1$-dimensional family of its $E$-translates.
A count of these $E$-orbits is defined by integrating
with respect to the (stacky) topological Euler characteristic $e( \cdot )$
over the quotient stack: 
\[ \DT^X_{n,\beta} = \int_{\Hilb^n(X,\beta) / E} \nu \dd{e} = \sum_{k \in \BZ} k \cdot e\left( \nu^{-1}(k) \right). \]
Here 
$\nu : \Hilb^n(X,\beta) / E \to \BZ$
is the Behrend weight. The numbers
\[ \DT^X_{n,\beta} \in \BQ \]
are called the $E$-reduced Donaldson--Thomas invariants of $X$ in class~$\beta$.\footnote{
In Section~\ref{subsection:GWtheory} we conjecture a correspondence (in the usual way) between the reduced Gromov--Witten and Donaldson--Thomas theories of $X$.
Our conjectures below hence can be understood purely on the Gromov--Witten side.
}

\subsection{Homology}
Consider the averaging operator
\[ P = \frac{1}{N} \sum_{i=0}^{N-1} g^i_{\ast} \colon H^{\ast}(S,\BQ) \to H^{\ast}(S,\BQ) \]
where we let $g_{\ast}$ denote the induced action on cohomology.
By a Mayer-Vietoris argument 
there exist a canonical isomorphism
\begin{equation} H_2(X,\BZ)/\mathrm{Torsion} \, \cong \, \mathrm{Image}(P|_{H_2(S,\BZ)}) \oplus \BZ. \label{H2isomorphism} \end{equation}
The summands on the right record the degree of a class over $S/\BZ_N$ and $E/\BZ_N$ respectively.
The group of algebraic $1$-cycles on $X$ up to numerical equivalence and torsion is similarly described by
\[ N_1(X) \cong P(N_1(S)) \oplus \BZ. \]

\subsection{Elliptic CHL models} \label{sec:intro:elliptic_CHL}
By a theorem of Mukai \cite{Mukai} every symplectic automorphism $g : S \to S$
defines (up to conjugacy) an element in $M_{24}$ which we denote by $g$ as well.
Mukai's argument is lattice theoretic and it can be shown that the conjugacy class of $g \in M_{24}$ only depends on the order $N$ of the symplectic automorphism.
Let
\begin{equation} F^{(r,s)}_N = \Ell_{g^r, g^s}(K3),\ \  r,s \in \{ 0,1, \ldots, N-1 \}. \label{tw_ti} \end{equation}
be the associated $g^r$-twisted $g^s$-twined elliptic genera.
Explicit expressions for these functions can be found in Appendix~\ref{appendix_elliptic_genera}.

Heterotic duality \cite{JS,DS,DJS,DJS2} predicts that the twisted-twined elliptic genera \eqref{tw_ti} encode the Donaldson--Thomas theory of CHL models.
However, unlike the elliptic genera, the Donaldson--Thomas theory of a CHL model does \emph{not} only depend on the order $N$,
but also on more refined data. 
For fixed polarization degree on the K3 and given $N$, there can be several (but at most finitely many) distinct deformation classes of CHL models.
In the following, we connect one of these deformation classes -- the elliptic CHL models --
to the physics formula.

Let $p : S \to \p^1$ be an elliptically fibered K3 surface which admits two sections
\[ \sigma_0, \sigma_1 : \p^1 \to S. \]
We declare $\sigma_0$ to be the zero section and we assume that $\sigma_1$ is of order~$N$ with respect to $\sigma_0$.
The translation by $\sigma_1$ in the elliptic fibers
\[
g: S \to S,\ s \mapsto s + \sigma_1(p(s))
\]
is a symplectic automorphism of order $N$.
We call the CHL model $X$ associated to $g$ an \emph{elliptic CHL model}.

Consider the $N$ sections 
\[ \sigma_0, \sigma_1, \sigma_i := g(\sigma_{i-1}), i=2, \ldots, N-1 \]
and let $F \in \Pic(S)$ be the class of a fiber of $S \to \p^1$.
The classes
\[ \beta_h = \frac{1}{N}( \sigma_0 + \ldots + \sigma_{N-1} + h F ),
\ \ h \geq 0
\]
lie in the image of $P|_{H_2(S,\BZ)}$ and define curve classes on $X$ via the isomorphism \eqref{H2isomorphism}.
%
%
The primitive Donaldson--Thomas partition function of $X$ is defined by
\[ \mathsf{Z}^X(q,t,p) = \sum_{h=0}^{\infty} \sum_{d=0}^{\infty} \sum_{n \in \BZ}
\DT^X_{n, (\beta_h,d)} q^{d-1} t^{\frac{1}{2} \langle \beta_h, \beta_h \rangle} (-p)^{n} \]
where we let $\langle \alpha, \beta \rangle = \int_S\alpha \cup \beta$ denote the intersection pairing on $S$.

We have the following conjecture that relates Mathieu moonshine to Donaldson--Thomas theory.
Let
\[ Z = \begin{pmatrix} \tau & z \\ z & \sigma \end{pmatrix} \]
be the coordinate on the genus $2$ Siegel upper half plane, and write
\begin{equation} q = e^{ 2 \pi i \tau}, \quad t = e^{ 2 \pi i \sigma}, \quad p = e^{2 \pi i z}. \label{varchange} \end{equation}
Consider the Borcherds lift of the twisted-twined elliptic genera \eqref{tw_ti},
\[ \widetilde{\Phi}_N(Z) = \widetilde{\Phi}_N(q,t,p). \]
We refer to Section~\ref{sec:mult_lift} for a precise definition.
We consider here $\tilde{\Phi}_N$ as a formal power series in the variables $q,t,p$ expanded in the region
\[ 0 < |q|, |t| \ll |p| < 1. \]

\begin{conj} \label{Conj_intro_1}  \label{Conj_anyN}  Let $X$ be an elliptic CHL model of order~$N$.
Under the variable change \eqref{varchange}
the primitive Donaldson--Thomas partition function of $X$
is the negative reciprocal of the Borcherds lift of the corresponding twisted--twined elliptic genera:
\[ \mathsf{Z}^X(q,t,p) = -\frac{1}{\widetilde{\Phi}_N(Z) }. \]
\end{conj}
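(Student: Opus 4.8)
The plan is to establish the base cases by degenerating the elliptic factor and reducing to the geometry of $S \times \p^1$. Writing $E = \BC^\ast/q^{\BZ}$ and taking the $N$-torsion point to be the root of unity $\zeta_N = e^{2\pi i/N}$, the shift $e \mapsto e + e_0$ becomes multiplication by $\zeta_N$ on $\BC^\ast \subset \p^1$. Sending $q \to 0$ degenerates $E$ to a nodal rational curve, and hence degenerates $X$ to a $\BZ_N$-quotient of $S \times \p^1$ in which $\BZ_N$ acts by $(g, \zeta_N)$ and the node is identified through the twist $g$. The degeneration formula in reduced Donaldson--Thomas theory then expresses $\mathsf{Z}^X$ through the relative DT theory of $S \times \p^1$ over the two special fibers $S \times \{0\}$ and $S \times \{\infty\}$, with these boundaries glued by $g$. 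The $N=1$ instance of this is exactly the reduced theory of $K3 \times E$, whose partition function is known to equal $-1/\chi_{10}$ for the Igusa cusp form $\chi_{10}$; this fixes the overall normalization and the sign.

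First I would cast the relative theory in the operator formalism on the Fock space $\CF = \bigoplus_{n \geq 0} H^\ast(\Hilb^n(S))$, following the $K3 \times \p^1$ picture: the relative DT partition function over $\p^1$ with two fibers is a matrix element of a single operator $\mathsf{W}$ acting on $\CF$, graded by the fiber class (the variable $t$) and the Euler characteristic (the variable $p$), with $q$ recording the degree over $\p^1$. Because the two boundary fibers are glued through $g$, the quotient partition function becomes the \emph{twisted trace} $\mathrm{Tr}_{\CF}\big(g_\ast \cdot \mathsf{W}\big)$, where $g_\ast$ is the action induced by $g$ on the Hilbert schemes of points. By a Lehn--Sorger / second-quantization computation, such a twisted trace evaluates to an infinite product in $q,t,p$ whose exponents are the $g$-equivariant Euler characteristics of $S$, i.e. the Fourier coefficients of the $g$-twined elliptic genus $\Ell_{\id,g}(K3) = F_N^{(0,1)}$.

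Next I would match this product with the Fourier--Jacobi expansion of the Borcherds lift. By the definition in Section~\ref{sec:mult_lift}, $\widetilde{\Phi}_N$ is the multiplicative lift of the twisted--twined genera $F_N^{(r,s)}$ and is therefore itself an infinite product whose exponents are read off from those genera; the identity $\mathsf{Z}^X = -1/\widetilde{\Phi}_N$ is then a term-by-term comparison of the two products in the region $0 < |q|,|t| \ll |p| < 1$. For the genuine base cases of small $N$ I would instead compute the left side outright with the topological vertex: for an elliptically fibered $S$ admitting a toric local model, the invariants $\DT^X_{n,(\beta_h,d)}$ in low $h$ and $d$ can be assembled from standard vertex contributions, and the resulting series can be checked directly against the expansion of $-1/\widetilde{\Phi}_N$ to the required order in $q$ and $t$.

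The hard part will be controlling the twisting. The twisted trace produces an infinite product governed by the single twined genus $F_N^{(0,1)}$, whereas the Borcherds product for $\widetilde{\Phi}_N$ is built from the entire family $F_N^{(r,s)}$, including the genuinely twisted sectors $r \neq 0$. Reconciling the two requires showing that the $t$-direction Fourier coefficients of the geometric product reorganize -- via the modular transformation properties relating the $F_N^{(r,s)}$, in the manner of a Hecke-type operation inherent to the Borcherds lift -- into precisely the twisted contributions demanded by $\widetilde{\Phi}_N$. Identifying where the $r \neq 0$ sectors arise on the smooth threefold $X$, which has no orbifold twisted sectors of its own, is exactly the subtlety that obstructs a uniform proof for all $N$ and forces the case-by-case vertex verification.
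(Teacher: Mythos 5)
The statement you are trying to prove is a conjecture: the paper does not prove it, and explicitly presents as its main results only the evaluation of individual Fourier--Jacobi coefficients --- the $t^{-1/N}$ and $q^{-1}$ coefficients in Theorem~\ref{thm:intro1}, and the $t^0$ coefficient in Theorem~\ref{thm:intro2} conditional on a Behrend-function conjecture. Your proposal does not close this gap; it is a strategy outline whose two load-bearing steps are not justified. First, the degeneration to $(S\times\p^1)/\!\sim$ and the resulting twisted-trace expression \eqref{deg_formula} are indeed in the paper (Section~\ref{subsec:degeneration}), but the operator $\mathsf{W}$ whose twisted trace you need is \emph{not} amenable to a Lehn--Sorger/second-quantization evaluation: its matrix elements are the reduced rubber Pandharipande--Thomas invariants of $K3\times\p^1$, which are themselves only conjecturally determined (via the PT/Hilb correspondence and the conjectural formula of \cite{HilbK3}). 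The claim that the twisted trace "evaluates to an infinite product whose exponents are the Fourier coefficients of $F_N^{(0,1)}$" is precisely the content of the conjecture, not a computation you have performed; the paper itself lists establishing a direct connection between this Fock-space operator and the moonshine data as an open problem. Second, you correctly identify that the $r\neq 0$ twisted sectors of the Borcherds product (the fractional powers $t^{k}$, $k\in\BZ+\tfrac{r}{N}$) must emerge from the sum over coinvariant classes $\tilde\gamma$ with $P(\tilde\gamma)=\gamma$, but you offer no mechanism for this reorganization; flagging it as "the hard part" does not resolve it.

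Even for the base cases, your route diverges from what would actually work. A K3 surface is not toric, so there is no "toric local model" to feed to the topological vertex; the paper's vertex argument (Section~\ref{sec: vertex}) is a stratification of the Hilbert scheme by cycle support, combined with formal-neighborhood vertex contributions, Mordell--Weil and $\BC^*$ actions to kill non-fixed loci, and delicate Behrend-function lemmas --- and it yields the full $t^{-1/N}$ coefficient (all $d$ and $n$ simultaneously), not a check "to the required order." The $q^{-1}$ coefficient is obtained by an entirely different argument: the degeneration formula reduces it to the Kawai--Yoshioka evaluation plus a theta-function identity for the $A_n$ lattices spanned by the $I_n$-fiber components. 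Neither of these computations appears in your proposal, so even the partial results that the paper does establish are not recovered by it.
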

\vspace{5pt}

The Borcherds lift $\widetilde{\Phi}_N$ is a Siegel modular form (for a congruence subgroup of $\Sp_4(\BZ)$) of weight
\[ \left\lceil \frac{24}{N+1}\right\rceil -2. \]
In case $N=1$ the Borcherds lift is the Igusa cusp form $\chi_{10}$. Conjecture~\ref{Conj_intro_1}
then specializes to the Igusa cusp form conjecture \cite{K3xE}, proven in \cite{ObPix2, FM1}, governing curve counts in $S \times E$,
\[ \mathsf{Z}^{S \times E} = -\frac{1}{\chi_{10}}. \]

The function $\widetilde{\Phi}_N$ satisfies the symmetry:
\begin{equation} \widetilde{\Phi}_N(q,t,p) = \widetilde{\Phi}_N(t^{1/N}, q^N, p). \label{Symmetry} \end{equation}
A consequence of Conjecture~\ref{Conj_anyN} is the following remarkable non-geometric symmetry of Donaldson--Thomas invariants:
\[ \DT^X_{n,(\beta_h,d)} = \DT^X_{n,(\beta_d,h)}. \]
This symmetry should arise from a certain derived auto-equivlance of the threefold $X$,
see \cite{FM1} for a related case.

\subsection{Results}
The main mathematical result of this paper is a proof of Conjecture~\ref{Conj_intro_1} in several base cases.
%
%
Define the series $\Delta_N(q)$ by
\begin{equation} \sum_{n=0}^{\infty} q^{n-1} e\left( \Hilb^n(S/\BZ_N) \right) = \frac{1}{\Delta_N(q)} \label{Delta_N_def} \end{equation}
where $\Hilb^n(S/\BZ_N)$ is the Hilbert scheme of $0$-dimensional substacks of the quotient stack $S/\BZ_N$ of length $n$.
The function $\Delta_N$ is a cusp form for $\Gamma(N)$ of weight $\ceil{\frac{24}{N+1}}$.
Explicit expressions are listed in Table~\ref{Table_Delta_N}.\footnote{See Lemma~\ref{lem: formula for e(Hilb(Y/G))} for more details on this computation.}

\begin{table}
{\renewcommand{\arraystretch}{1.25}\begin{tabular}{| c | l | }
\hline
$N$ & $\Delta_N(\tau)$  \\
\hline
$1$ &  $\eta(\tau)^{24}$  \\
\hline
$2$ & $\eta(\tau)^8 \eta(2 \tau)^8$  \\
\hline
$3$ &  $\eta(\tau)^6 \eta(3 \tau)^6$  \\
\hline
$4$ & $\eta(\tau)^4 \eta(2 \tau)^2 \eta(4 \tau)^4$  \\
\hline
$5$ & $\eta(\tau)^4 \eta(5 \tau)^4$    \\
\hline
$6$ & $ \eta(\tau)^2 \eta(2 \tau)^2 \eta(3 \tau)^2 \eta(6 \tau)^2$   \\
\hline
$7$ & $\eta(\tau)^3 \eta(7 \tau)^3$   \\
\hline
$8$ & $\eta(\tau)^2 \eta(2 \tau) \eta(4 \tau) \eta(8 \tau)^2$  \\
\hline
\end{tabular}}
\caption{The series $\Delta_N(\tau)$ for all possible orders $N$.
Here $\eta(\tau) = q^{1/24} \prod_{n \geq 1} (1-q^n)$, where $q = e^{2 \pi i \tau}$, is the Dedekind function. 
}
\label{Table_Delta_N}
\vspace{-15pt}
\end{table}
Define also the Jacobi theta function
\[ \Theta(q,p) 
= -i(p^{1/2} - p^{-1/2}) \prod_{m \geq 1} \frac{ (1-pq^m) (1-p^{-1}q^m)}{ (1-q^m)^2 }. \]

\begin{thm} \label{thm:intro1} Let $X$ be an elliptic CHL model of order~$N$. Then
\begin{align*}
\left[  \mathsf{Z}^X(q,t,p) \right]_{t^{-1/N}} & = \frac{1}{\Theta(q^N,p)^2 \Delta_N(q)}  \\
\left[  \mathsf{Z}^X(q,t,p) \right]_{q^{-1}} & = \frac{1}{\Theta(t,p)^2 \Delta_N(t^{1/N})}
\end{align*} 
In particular, Conjecture~\ref{Conj_intro_1} holds after taking coefficients $t^{-1/N}$ or $q^{-1}$.
\end{thm}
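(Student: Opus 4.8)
The plan is to identify the two coefficient extractions with explicit curve-counting problems and then evaluate them by degenerating the elliptic curve. First I would record the intersection numbers of the sections: each $\sigma_i$ is a rigid $(-2)$-curve, the $\sigma_i$ are pairwise disjoint, and $\sigma_i \cdot F = 1$, $F^2 = 0$. A direct computation then gives $\frac{1}{2}\langle \beta_h,\beta_h\rangle = \frac{h-1}{N}$, so the power of $t$ attached to $\DT^X_{n,(\beta_h,d)}$ is $t^{(h-1)/N}$. Consequently $\left[\mathsf{Z}^X\right]_{t^{-1/N}}$ is the generating series of the invariants in the minimal K3-class $\beta_0 = P(\sigma_0)$ (the case $h=0$), summed over the degree $d$ over $E/\BZ_N$, while $\left[\mathsf{Z}^X\right]_{q^{-1}}$ is the series of the invariants with $d = 0$, summed over $h$. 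The two target formulas are exchanged by $q \mapsto t^{1/N}$, $t \mapsto q^N$, which is exactly the symmetry \eqref{Symmetry}; I would nonetheless prove each identity directly, since that symmetry is only conjectural for $\mathsf{Z}^X$ itself.

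For the identity in the minimal class I would pull back along the \'etale quotient $S \times E \to X$. An invariant subscheme in class $(\beta_0,d)$ has K3-projection in the class $\sigma_0 + \cdots + \sigma_{N-1}$; since this is a disjoint union of rigid $(-2)$-curves, the K3-support is rigid, and the Hilbert scheme is supported on configurations built from these sections together with points floating on $S \times E$. I would then degenerate $E$ to a nodal rational curve, so that translation by the $N$-torsion point $e_0$ degenerates to the order $N$ rotation $z \mapsto \zeta_N z$ of $\p^1$ fixing $0$ and $\infty$. By the degeneration formula this rewrites the $E$-reduced theory of $X$ as a $\BZ_N$-twisted count on $S \times \p^1$ relative to the fibres over the node, which in the Nakajima formalism becomes a twisted trace of a vertex operator on the Fock space $\bigoplus_n H^{\ast}(\Hilb^n S)$.

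In this trace the floating points contribute the twisted partition function $\sum_n q^{n-1} e(\Hilb^n(S/\BZ_N)) = 1/\Delta_N(q)$ of \eqref{Delta_N_def}, while the rigid sections contribute $\Theta(q^N,p)^2$: the square records the two fixed fibres $0,\infty$ of the rotated $\p^1$, the variable $p$ tracks $\chi(\CO_Z)$, and the appearance of $q^N$ rather than $q$ records that a single section wraps the quotient $E/\BZ_N$, whose minimal cover has degree $N$. This yields $\left[\mathsf{Z}^X\right]_{t^{-1/N}} = 1/(\Theta(q^N,p)^2\Delta_N(q))$. For the second identity the class $(\beta_h,0)$ forces the subscheme into a single K3 fibre $S_e \subset X$; after quotienting by the $E$-action one is left with the reduced theory of curves in class $\beta_h$ on the K3, and the same degeneration and trace computation, now with $t$ as the K3-class variable, produces $1/(\Theta(t,p)^2\Delta_N(t^{1/N}))$.

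The main obstacle is the exact evaluation of the twisted vertex-operator trace: one must show it telescopes to precisely $\Theta^2/\Delta_N$, with the correct power $q^N$ and the correct square, rather than merely up to an undetermined normalization. Making the rigidity argument compatible with Behrend-weighted Euler characteristics -- controlling embedded points and thickenings along the sections, and checking that the Behrend function is constant along the relevant $E$-orbits so that the integral $\int_{\Hilb/E}\nu\,\dd{e}$ behaves as expected -- is the technical heart; by comparison the reduction steps and the intersection bookkeeping are routine.
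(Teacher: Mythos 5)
Your identification of the two coefficients is correct ($\tfrac12\langle\beta_h,\beta_h\rangle=(h-1)/N$, so $t^{-1/N}$ picks out the class $\sigma$ and $q^{-1}$ picks out $d=0$), and your split of the $t^{-1/N}$ computation into floating points and rigid sections matches the paper's stratification in Section~\ref{sec: vertex} into $\Hilb(U\times E)^{\ZmodN{N}}$ and the formal neighborhood of $\Hilb_{\fix}(\widehat\sigma_0\times E)$. But two of your mechanisms fail or are missing. First, degenerating $E$ to a nodal rational curve so that translation by $e_0$ becomes the rotation $z\mapsto\zeta_N z$ does not give a usable degeneration of $X$: the induced $\ZmodN{N}$-action on the central fiber is $(s,z)\mapsto(gs,\zeta_N z)$, which is not free over $z=0,\infty$ because $g$ has fixed points on $S$ (Table~\ref{Table1}), so the quotient total space is singular there. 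The paper instead degenerates $E$ to an $I_N$ cycle of rational curves on which the shift permutes components freely, which is what yields the twisted-trace formula \eqref{deg_formula}. Second, and more seriously, you correctly flag the exact evaluation of the section contribution (``the twisted vertex-operator trace'') as the main obstacle but offer no way past it; this is precisely the step the paper does \emph{not} attempt. Its actual argument is that the factor $e_{\vir}\bigl(\widehat{\Hilb}_{\fix}^{\sigma_{0}+\bullet E,\bullet}(\widehat{\sigma}_{0}\times E)\bigr)$ depends on $N$ only through the substitution $q\mapsto q^N$, because it lives on the formal neighborhood of a single $(-2)$-section; it is therefore determined by the $N=1$ case, which is known from the proof of the Igusa cusp form conjecture \cite{ObPix2}. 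Without this $N$-independence trick (or an unconditional trace evaluation, which is not currently available), your first identity does not close.

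For the $q^{-1}$ coefficient your sketch skips the entire content of the proof. Setting $d=0$ does confine curves to K3 fibers, but the class $\beta_h\in H_2(X,\BZ)$ is represented on $S$ by \emph{every} effective $\tilde\beta$ with $P(\tilde\beta)=\beta_h$, and the degeneration formula \eqref{deg_formula} produces the sum $\tfrac1N\sum_{\tilde\beta}\PT^{S\times\p^1}_{n,(\tilde\beta,0)}(1\otimes1)$ over all such lifts. Each summand is evaluated via Kawai--Yoshioka \cite{KY} as $\bigl[\Delta(t)^{-1}\Theta(t,p)^{-2}\bigr]_{t^{\langle\tilde\beta,\tilde\beta\rangle/2}}$, but it is weighted by $t^{\langle P\tilde\beta,P\tilde\beta\rangle/2}$, and these two squares differ. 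Summing over $\tilde\beta=\sigma_0+aF+\sum_{i,j}d_{ij}C_i^{(j)}$, with $C_i^{(j)}$ the components of the $I_{n_j+1}$ fibers, turns the answer into $\Delta(t)^{-1}\Theta(t,p)^{-2}$ times a product of shifted theta functions of $A_{n_j}$ root lattices, and only the identity $\vartheta_{A_n,v_n}(t)=\eta(t)^{n+1}/\eta(t^{1/(n+1)})$ of \cite{GNS} converts this product into $\Delta(t)/\Delta_N(t^{1/N})$. The fractional powers of $t$ in $\Delta_N(t^{1/N})$ come exactly from this lattice sum; ``the same trace computation with $t$ as the K3 variable'' does not exist (there is no Fock-space trace in the $d=0$ direction), so this half of the theorem is unproved in your proposal.
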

\vspace{5pt}

The theorem determines the first coefficient in both the $t$ and $q$ direction of $Z^X$.
The coefficient of $t^{-1/N}$ correspond to curve classes which are of genus $0$ (in a certain sense) in the K3 direction.
The coefficient $q^{-1}$ correspond to curves of degree $0$ over the elliptic curve.
The symmetry between the first $q$ and $t$ coefficient in Theorem~\ref{thm:intro1}
is a special case of the $t \leftrightarrow q^N$ symmetry \eqref{Symmetry}.

The proof of Theorem~\ref{thm:intro1} relies on two approaches. For the $t^{-1/N}$ term we use the
topological vertex method of \cite{Bryan-Kool, Bryan-Kool-Young} to stratify the moduli space and calculate directly.
For the $q^{-1}$ term we use a degeneration to $K3 \times \p^1$
and results of Garbagnati, van--Geemen and Sarti \cite{GS, GarS, GarS2} on elliptic K3 surfaces with $N$-torsion section.
Here the appearence of $\Delta_N$ 
may be viewed as a consequence of the McKay correspondence.

The vertex method also yields the second coefficient in the $t$-expansion of Conjecture~\ref{Conj_intro_1}.
The result requires a technical assumption concerning the Behrend function.
\begin{thm} \label{thm:intro2} Assume Conjecture~21 from \cite{Bryan-Kool} on the Behrend function. Then $\left[  \mathsf{Z}^X(q,t,p) \right]_{t^{0}}
$ is
\begin{multline*} 
\frac{2\phi_1(N)}{\Delta_N (q) \phi_2(N) } 
\left( -12\wp(q^N,p) +  \widetilde{\mathcal{E}}_N(q) - \frac{1}{\phi_1(N)} \sum_{m|N} \widetilde{\mathcal{E}}_m(q) \mu(m)
\right)
\end{multline*}
where $\wp(q,p)$ is the Weierstra{\ss} elliptic function, 
\[ 
\widetilde{\mathcal{E}}_m(q) = E_2(q^m)-\frac{1}{m}E_2(q) 
\]
is a holomorphic weight 2 modular form for $\Gamma(m)$ (see section~\ref{subsec:modular_forms}),
$\mu(m)$ is the M\"obius function, and $\phi_d(N) $ is the number of $N$-torsion points in $\ZmodN{N}^d$ so that $\phi_1 = \phi $ is the usual Euler phi function and $\phi_2(N)$ is the number of $N$-torsion points on an elliptic curve. Explicitly,  
\[
\phi_d(N) = N^d\prod_{p|N} \left( 1-\frac{1}{p^d}\right). 
\]
\end{thm}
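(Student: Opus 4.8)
The plan is to push the topological vertex computation of \cite{Bryan-Kool, Bryan-Kool-Young} that underlies Theorem~\ref{thm:intro1} one step further in the $t$-direction. First I would identify the relevant curve class. A short intersection computation — using $\sigma_i^2=-2$, $\sigma_i\cdot F=1$, $F^2=0$ and the intersection behaviour of the torsion sections (cf.\ \cite{GS,GarS,GarS2}), consistently with the $t^{-1/N}$ normalization of Theorem~\ref{thm:intro1} — gives
\[ \tfrac{1}{2}\langle \beta_h,\beta_h\rangle = \frac{h-1}{N}. \]
Hence $[\mathsf{Z}^X]_{t^0}$ records the class $\beta_1$, and the task is to evaluate
\[ \sum_{d\geq 0}\sum_{n\in\BZ}\DT^X_{n,(\beta_1,d)}\,q^{d-1}(-p)^n. \]
Geometrically $\beta_1$ is the $\BZ_N$-orbit of the sections $\sigma_0,\dots,\sigma_{N-1}$ together with a single additional fiber $F$, so relative to the $\beta_0$ computation of Theorem~\ref{thm:intro1} the one new feature is the insertion of exactly one fiber.

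Next I would stratify $\Hilb^n(X,(\beta_1,d))/E$ by the geometry of the one-dimensional support, following Bryan--Kool. By the multiplicativity of the vertex the Behrend-weighted generating series splits into a contribution of roaming embedded points and a contribution of the one-dimensional locus. The point contribution is $1/\Delta_N(q)$ by the definition~\eqref{Delta_N_def}, which accounts for the factor $\Delta_N(q)$ in the denominator. The one-dimensional contribution is then computed for the configuration given by the sections together with the single fiber $F$, where $F$ is allowed to sit over any point of the base $\p^1$ and to meet the sections with the prescribed multiplicities.

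The heart of the argument is this one-fiber sum. A generic fiber is a smooth elliptic curve, and its local vertex contribution — weighted by the $E$-reduced Behrend measure and resummed over the thickenings recorded by $p$ — assembles into the Weierstra{\ss} function $\wp(q^N,p)$, the $q^N$ reflecting the $N$-fold scaling of the elliptic direction under the quotient, exactly as in the $\Theta(q^N,p)$ of Theorem~\ref{thm:intro1}. The sum over the base position of $F$, together with the bookkeeping of how the fiber is distributed among the $N$ sections of the orbit, produces the holomorphic quasimodular pieces $\widetilde{\mathcal{E}}_m(q)$; the Möbius-weighted combination $\sum_{m\mid N}\widetilde{\mathcal{E}}_m(q)\mu(m)$ arises by inclusion--exclusion over the exact order of the torsion involved, while the constants $\phi_1(N)$ and $\phi_2(N)$ enter as the counts of, respectively, the primitive torsion sections and the $N$-torsion points of the fiber that weight these configurations. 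The numerical factor $-12$ should come out of the standard relation between $\wp$ and $E_2$ obtained by differentiating $\log\Theta$ twice. Assembling these pieces reduces the identity to a finite check among weight-$2$ quasimodular forms for $\Gamma(N)$.

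I expect the main obstacle to be the Behrend function. The strata carrying a thickened fiber alongside the sections are singular and non-reduced, so their $\nu$-weighted Euler characteristics cannot be read off from a dimension count; this is precisely the input encoded by Conjecture~21 of \cite{Bryan-Kool}, and is the reason the statement is conditional. The remaining difficulty is purely organizational: separating the genuinely elliptic $\wp$-contribution from the base-direction Eisenstein contributions, and tracking the normalizations carefully enough to produce the exact constants $-12$, $\phi_d(N)$ and the Möbius weights, rather than any further geometric input.
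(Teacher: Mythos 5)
Your overall framework (vertex stratification of the Hilbert scheme by curve support, the factor $1/\Delta_N(q)$ coming from the floating points via \eqref{Delta_N_def}, and the role of Conjecture~21 of \cite{Bryan-Kool} in converting Euler characteristics to Behrend-weighted ones) matches the paper, and your computation $\tfrac12\langle\beta_h,\beta_h\rangle=(h-1)/N$ correctly identifies the $t^0$ coefficient with $h=1$. But the heart of your argument --- where $\wp(q^N,p)$ and the M\"obius-weighted Eisenstein sum actually come from --- does not match the geometry and would fail as written. The paper first applies the monodromy automorphism $\phi=s_{\sigma_0}\circ\cdots\circ s_{\sigma_{N-1}}$ (a composition of reflections in the section classes, commuting with the $\BZ_N$-action) to replace the class $\sigma+\tfrac1N F$ by the pure fiber class $\tfrac1N F$, so no ``sections plus one fiber'' configurations need to be analyzed at all. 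After this reduction, the key structural lemma of Section~\ref{sec:z0} shows that for $N>1$ the one-dimensional support is, besides horizontal components, either \emph{vertical} --- the $\BZ_N$-orbit of $C_0\times\{x_0\}$ with $C_0$ a component of one of the $24\phi_1(N)/\phi_2(N)$ fibers of type $I_N$ --- or \emph{diagonal} --- the graph of a $\BZ_N$-equivariant homomorphism or anti-homomorphism $f:F_t\to E$ from a \emph{smooth} fiber to $E$. Your proposal contains neither class of configurations: for $N>1$ a smooth fiber times a point of $E$ is not $\BZ_N$-invariant (its orbit lies in class $NF$), so the ``generic fiber sitting over any point of the base'' that you invoke does not occur in class $\tfrac1N F+dE'$, and you never consider graphs of isogenies.

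Consequently your attributions are essentially reversed. In the paper the $\wp(q^N,p)$ term (together with a $-\tfrac1{12}E_2(q^N)$ correction) is produced by the \emph{vertical} curves on the singular $I_N$ fibers, via the topological vertex and the Bloch--Okounkov one-point function $F(p,p^{-1};q^N)=-\wp(p,q^N)+\tfrac1{12}E_2(q^N)$, not by smooth fibers. The M\"obius-weighted sum $\sum_{m|N}\widetilde{\CE}_m(q)\mu(m)$ and the factor $\phi_2(N)$ come from the \emph{diagonal} curves: one counts degree-$d$, $\BZ_N$-equivariant homomorphisms $F_t\to E$ by M\"obius inversion on the exact order of $f(\sigma_1(t))$, and a Mordell--Weil translation argument shows that each such curve contributes exactly $1$, so the diagonal contribution is independent of $p$. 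Without the reflection trick, the $I_N$-fiber/isogeny dichotomy, and the equivariant isogeny count, the pieces of the claimed formula cannot be assembled; these are genuinely missing geometric ideas, not the ``purely organizational'' bookkeeping your proposal anticipates.
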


\subsection{Order two CHL models} \label{subsec:order_two_CHL}
We consider the Donaldon--Thomas theory of CHL models 
which come from a symplectic involution on the K3 in general.
The reduced Donaldson--Thomas invariants $\DT^X_{n,(\gamma,d)}$
are invariant
under deformations which preserve the Hodge type of the curve class $(\gamma,d) \in H_2(X,\BZ)$.
In case $N=2$ such deformation correspond to deformation of triples
\[ (S, L, \iota : S \to S) \]
where $S$ is a K3 surface, $L \in \Pic(S)$ is an invariant primitive ample class and $\iota$ is the involution.
By the Torelli theorem a K3 surface admits an involution if and only if $E_8(-2) \subset \Pic(S)$.
Hence the moduli space of such triples for fixed degree of $L$ can be described as follows \cite{GS}.
If $L^2 \equiv 2$ mod $4$ there is one connected component corresponding to
K3 surfaces polarized by the lattice
\begin{equation} E_8(-2) \oplus \BZ L . \label{dfsdf} \end{equation}
If $L^2 \equiv 0$ mod $4$ there are \emph{two} connected components: Either the K3 surface is
polarized by the lattice \eqref{dfsdf} or by the degree $2$ overlattice
obtained by adjoining a vector $(L/2, v/2)$ for some $v \in E_8(-2)$,
\begin{equation} \mathrm{Span}_{\BZ}\Big( \BZ L \oplus E_8(-2), \, (L/2, v/2)  \Big). \label{dfsdf2} \end{equation}
In particular, the Donaldson--Thomas invariant does not only depend on the degree of a primitive $\gamma$,
but also on lattice data.

Concretely, define
the \emph{divisibility} of a class $\gamma \in \mathrm{Image}(P|_{N_1(S)})$ to be the maximal integer $m \geq 1$ such that
\[ \frac{\gamma}{m} \in \mathrm{Image}(P|_{N_1(S)}) \subset \frac{1}{2} H_2(S,\BZ). \]
The class $\gamma$ is \emph{primitive} if it is of divisibility $1$.
A primitive class $\gamma$ is
\begin{itemize}
\item \emph{untwisted} if $\gamma \in H_2(S,\BZ)$,
\item \emph{twisted} if $\gamma \in \frac{1}{2} H_2(S,\BZ) \setminus H_2(S,\BZ)$.
\end{itemize}
The untwisted and twisted cases correspond to lattice polarizations by \eqref{dfsdf} and \eqref{dfsdf2} respectively
(in the twisted case, we take $\gamma = L/2$).

Let now $X$ be a $N=2$ CHL model and consider a curve class
\[ \beta = (\gamma, d) \in N_1(X) \subset H_2(X,\BZ), \]
such that $\gamma$ is non-zero and primitive with self-intersection
\[ \langle \gamma, \gamma \rangle 
= 2 s,
\quad
s \in 
\begin{cases}
\BZ & \text{ if } \gamma \text{ untwisted} \\
\frac{1}{2} \BZ & \text{ if } \gamma \text{ twisted}.
\end{cases}
\]
By deformation invariance
the Donaldson--Thomas invariant $\DT^X_{n, (\gamma,d)}$ only depends on $n,s,d$, and whether $\gamma$ is untwisted or twisted.
We write
\[ \DT^X_{n, (\gamma,d)} = 
\begin{cases}
\DT^{\text{untw}}_{n, s, d} & \text{ if } \gamma \text{ is untwisted},\\
\DT^{\text{tw}}_{n, s, d}  & \text{ if } \gamma \text{ is twisted}.
\end{cases}
\]
Form the partition functions of twisted and untwisted primitive invariants:
\begin{equation} \label{defn_part_fn}
\begin{aligned}
\mathsf{Z}^{\text{untw}}(q,t,p) & = \sum_{\substack{s \in \BZ \\ s \geq -1}} \sum_{d \geq 0} \sum_{n \in \BZ} \DT^{\text{untw}}_{n, s, d} q^{d-1} t^{s} (-p)^{n} \\
\mathsf{Z}^{\text{tw}}(q,t,p) & = \sum_{\substack{s \in \frac{1}{2} \BZ \\ s \geq -1/2}} \sum_{n \in \BZ} \sum_{d \geq 0} \DT^{\text{tw}}_{n, s, d} q^{d-1} t^{s}  (-p)^{n}.
\end{aligned}
\end{equation}

The twisted series $\mathsf{Z}^{\text{tw}}$ is precisely the primtive DT partition function of the $N=2$ elliptic CHL.
Hence by Conjecture~\ref{Conj_intro_1} the twisted series is conjecturally determined by
\[
\mathsf{Z}^{\textup{tw}}(q,t,p) = -\frac{1}{\widetilde{\Phi_2}(Z)}.
\]

The untwisted series is new and more interesting.
The following conjecture gives a precise formula.
We refer to Section~\ref{subsec:Examples_siegel_forms} for a precise definition of the modular forms.
\begin{conj} \label{Conj_intro_2} The untwisted series for order two CHL models is determined by
\[
\mathsf{Z}^{\textup{untw}}(q, t,p) =
\frac{-8 F_4(Z) + 8 G_4(Z) - \frac{7}{30} E_{4}^{(2)}(2Z)}{\chi_{10}(Z)}.
\]
\end{conj}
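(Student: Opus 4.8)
The plan is to compute the untwisted invariants $\DT^{\text{untw}}_{n,s,d}$ as matrix elements of a \emph{twisted} trace on Fock space and then to identify the resulting three-variable generating series with the conjectural ratio of Siegel forms. First I would fix a deformation representative in the untwisted component, i.e.\ a K3 surface $S$ polarized by $E_8(-2)\oplus \BZ L$ together with its symplectic involution $\iota$, and use the $E$-action to organize $\Hilb^n(X,(\gamma,d))/E$ by the degree over $E/\BZ_2$. As in the proof of the $q^{-1}$ coefficient of Theorem~\ref{thm:intro1}, I would degenerate the elliptic curve $E$ to a nodal rational curve, so that the CHL model $X$ degenerates along the lines of the $\mathrm{K3}\times\p^1$ geometry used there. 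The normalization formula for DT invariants under this degeneration expresses $\mathsf{Z}^{\text{untw}}$ as the trace of a product of vertex and edge operators acting on $\bigoplus_n H^\ast(\Hilb^n(S),\BQ)$; the new feature, compared with the $S\times E$ case governing the Igusa form, is that gluing across the node inserts the pushforward $\iota_\ast$, so that the trace is \emph{twisted} by the symplectic involution, exactly the twisted trace advertised in the introduction.

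The second step is to evaluate this twisted trace. The underlying operator is the same Nakajima-type operator that produces $\chi_{10}$ in the $S\times E$ case \cite{K3xE,ObPix2,FM1}; only the insertion of $\iota_\ast$ is new. I would diagonalize the $\iota$-action on $H^\ast(S,\BQ)$, split the Fock space into its invariant and anti-invariant parts, and compute the trace factor by factor. The untwisted classes $\gamma\in H_2(S,\BZ)^{\iota}$ couple to the invariant part, while the anti-invariant lattice $E_8(-2)$, on which $\iota$ acts by $-1$, contributes a twisted character; this anti-invariant contribution is what is responsible for the level-two form $E_4^{(2)}(2Z)$ and for the Iwahori form $F_4$ that has no counterpart in the Borcherds lift. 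This is precisely the point at which the untwisted series departs from the twisted series of Conjecture~\ref{Conj_intro_1}, so the evaluation cannot simply be imported from the moonshine side.

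Finally I would match the trace against the proposed formula. Both sides are meromorphic functions on the genus-two Siegel upper half plane, and the strategy is to show that $\mathsf{Z}^{\text{untw}}$ is a quotient of a weight-four Siegel form for the relevant Iwahori subgroup by $\chi_{10}$, and then to determine the finitely many coefficients of the weight-four numerator. The denominator $\chi_{10}$ should be forced by the polar structure in the variable $p$: the leading Behrend-weighted count along $p$ reproduces the double pole of $\Theta^{-2}$ together with the $\mathrm{K3}$ reduced series, just as in the leading $t$-coefficient of Theorem~\ref{thm:intro1}. With the pole fixed, the numerator lies in the finite-dimensional space spanned by $F_4$, $G_4$ and $E_4^{(2)}(2Z)$, and the coefficients $-8$, $8$, $-\tfrac{7}{30}$ would be pinned down by computing the first few $t$-coefficients of $\mathsf{Z}^{\text{untw}}$ by the topological vertex, using the untwisted analogues of Theorems~\ref{thm:intro1} and~\ref{thm:intro2}.

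The main obstacle is the step that establishes \emph{full} modularity. Matching finitely many coefficients only verifies the formula to bounded order, whereas a genuine proof needs a structural reason that the twisted Fock-space trace is a Siegel modular form for the Iwahori subgroup. Unlike the twisted series, $\mathsf{Z}^{\text{untw}}$ is not a Borcherds lift of a Mathieu-moonshine genus, so its modularity cannot be read off from a lift; one would instead have to control the trace uniformly in all three variables, presumably by realizing the Iwahori-level symmetry geometrically -- for instance through a derived autoequivalence of $X$ in the spirit of the $t\leftrightarrow q^N$ symmetry \eqref{Symmetry} -- or through a theta-decomposition argument attached to the anti-invariant lattice $E_8(-2)$. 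I expect this to be the genuinely hard and, as the statement is a conjecture rather than a theorem, still-open part of the problem.
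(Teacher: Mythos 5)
The statement you are addressing is Conjecture~\ref{Conj_intro_2}; the paper does not prove it, and neither do you, so the honest comparison is between your proposed strategy and the evidence the paper actually assembles. On that score your outline tracks the paper closely. The paper's support for the conjecture (Section~\ref{dfgadgs}) consists of exactly the two boundary computations you describe, carried out for a concrete untwisted model (a K3 double cover of a rational elliptic surface with $\Lambda^g = \mathrm{Span}(\sigma_0,F)$ and $\Lambda_g = E_8(-2)$): a topological-vertex computation of the first $t$-coefficient, giving $\tfrac12 \Theta(q,p)^{-2}\Delta_2(q)^{-1}$, and a degeneration to $\mathrm{K3}\times\p^1$ for the first $q$-coefficient, where the sum over lifts $\tilde\gamma$ with $P(\tilde\gamma)=\gamma$ produces precisely the theta function of the coinvariant lattice and yields $\tfrac12\, E_4(t^2)\,\Theta(t,p)^{-2}\Delta(t)^{-1}$; further coefficients are checked against the (conjectural) computation scheme of Section~\ref{subsec:computation_scheme}. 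Your identification of the $E_8(-2)$ contribution as the source of the level-two structure is exactly what that second computation exhibits, though note that in the degeneration formula \eqref{deg_formula} the lattice enters through the sum over lifts of the curve class rather than through an alternating character on the anti-invariant Fock oscillators; the two "twists" (the $\Gamma_g$ insertion and the sum over $\tilde\gamma$) play distinct roles.

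You correctly locate the genuine gap: there is no structural argument that the twisted trace is a Siegel modular form for $B(2)$, and without that, matching finitely many Fourier--Jacobi coefficients against the span of $F_4$, $G_4$, $E_4^{(2)}(2Z)$ proves nothing (note also that the weight-$4$ forms for $B(2)$ are spanned by $X^2, Y, Z, T$, a four-dimensional space, so even granting modularity you would need to pin down one more coefficient than your three-element spanning set suggests). The paper is explicit that this cannot be imported from the moonshine/heterotic side: Appendix~\ref{appendix2} shows that the duality group does not act transitively on charges with fixed $(Q^2, Q\cdot P, P^2)$, the residue in $\Lambda_m^\ast/\Lambda_m$ being a further invariant separating the twisted and untwisted sectors, which is precisely why the untwisted series is not the Borcherds lift $\widetilde\Phi_2$ and why $F_4$ has no counterpart in the physics literature. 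So your proposal is a reasonable reconstruction of the paper's evidence together with an accurate diagnosis of what remains open, but it is not a proof, and you should not present the final modularity step as a routine finite verification.
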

\vspace{5pt}

The function in the denominator is the Igusa cusp form which appears in curve counting on $S \times E$.
The numerator is a sum of two different kinds of modular forms.
The series $G_4(Z)$ and $E_4^{(2)}(2Z)$ are Siegel modular forms of weight $4$
for the level two subgroup $\Gamma^{(2)}_0(2) \subset \Sp_4(\BZ)$.
The function $F_4(Z)$ is a Siegel paramodular form of degree $2$
(these correspond to sections of a line bundle on the moduli space of $(1,2)$ polarized abelian surfaces).
Hence the conjecture implies that $\mathsf{Z}^{\textup{untw}}$ is a Siegel modular form (of weight $-6$) for the level $2$ Iwahori subgroup
\[ B(2) = \Sp_4(\BZ) \cap 
\begin{pmatrix}
\BZ & \BZ & \BZ& \BZ \\ 
2 \BZ & \BZ & \BZ& \BZ \\ 
2 \BZ & 2 \BZ & \BZ& 2 \BZ \\ 
2 \BZ & 2 \BZ & \BZ& \BZ
\end{pmatrix}.
\]

Conjectures~\ref{Conj_intro_1} and~\ref{Conj_intro_2} describe the primitive Donaldson--Thomas invariants of all order $2$ CHL models.
The invariants for imprimtive classes are determined from the primitive ones by a multiple cover formula,
see Conjecture~\ref{Conj_Multiple_Cover} in Section~\ref{dfgadgs} for a precise statement.

\subsection{Open questions and further directions}
(1) The multiplicative lift of the twisted--twined elliptic genera only matches
the Donald\-son--Thomas theory of one of the deformation classes of CHL models.
It would be interesting to connect (as we have done in case $N=2$) the other 
deformation classes to Siegel modular forms as well.
It is natural to expect paramodular or Iwahori Siegel forms here as well.
However, for higher $N$ the number of deformations families is not always known, see for example \cite[6.1]{GarS}.

\vspace{2pt}
\noindent (2) 
Let $G$ be a finite (not necessarily cyclic) group of symplectic automorphisms of a K3 surface $S$,
and assume $G$ embeds into the group of torsion points of an elliptic curve $E$.
The quotient
\[ X = (S \times E)/G \]
is called a \emph{generalized CHL model}.
Our conjectures should have analogs also for these models.
Since $G$ embedds into the torsion points, it is abelian and generated by two elements $g, h \in G$.
A connection between the Donaldson--Thomas partition function and the $g$-twisted $h$-twined elliptic genus $\Ell_{g,h}(K3)$
can be expected \cite[1.2]{PV}.

\vspace{2pt}
\noindent (3) Let $g : D^b(S) \to D^b(S)$ be a derived auto-equivalence that is symplectic and preserves a Bridgeland stability conditions
(in physics, $g$ is called a automorphism of a K3 non-linear sigma model).
Then Gaberdiel, Hohenegger, Volpato \cite{GHV2}, and Huybrechts \cite{Huy2} prove that $g$ yields an element in the Conway group unique up to conjugation.
Moreover a Conway moonshine has been proposed in \cite{DM2}.
It would be interesting to find a $g$-equivariant counting theory that correspond to this moonshine phenomenon.
Following a suggestion by Shamit Kachru a slightly adhoc definition of $g$-equivariant invariants is proposed and discussed in Section~\ref{subsection:non-geometric-CHL}.

\vspace{2pt}
\noindent (4) The Pandharipande--Thomas theory of the relative geometry
\[ S \times \p^1 / \{ S_0 , S_{\infty} \} \]
defines a matrix \cite{K3xE, HilbK3} acting on the Fock space
\[ \bigoplus_{n = 0}^{\infty} H^{\ast}( \Hilb^n(S) ). \]
By the degeneration formula the Donaldson--Thomas partition function of a CHL model $X$ 
can be written as the $g$-twined $g$-twisted trace of this matrix (the formula involves a sum over
coinvariant classes on $S$ which may be interpreted as twisting, see Section~\ref{subsec:degeneration} for details).
Hence the Fock space matrix controls the Donaldson--Thomas theory of all CHL models.
It would be interesting to establish a more direct connection between the matrix and Mathieu moonshine.
We will come back to this question in the future.

\subsection{Plan of the paper}
In Section~\ref{Section:CHL_models} we recall some background on symplectic automorphisms, 
CHL models and their curve counting theories. 
We also discuss the degeneration formula to $K3 \times \p^1$
and define invariants for non-geometric CHL models.
In Section~\ref{section:modular_forms} we discuss Jacobi and modular forms,
and define the modular forms which are relevant to Conjecture~\ref{Conj_intro_2}.
Section~\ref{sec: vertex} contains the proof of the $t^{-1/N}$ coefficient of Theorem~\ref{thm:intro1}, and the proof of Theorem~\ref{thm:intro2}.
Section~\ref{first_coefficient_in_q} contains the proof of the $q^{-1}$ coefficient of Theorem~\ref{thm:intro1}.
In Section~\ref{dfgadgs} we generalize the conjectures on order two CHL models to imprimitive curve classes and provide some evidence.
In the Appendix~\ref{appendix_elliptic_genera} we list explicitly the twisted-twined elliptic genera we use in this paper
and define their multiplicative lift.
In Appendix~\ref{appendix2} (by Sheldon Katz and the second author) we discuss the discrepency between our results for order two CHL models and the string theory predictions.

\subsection{Acknowledgements}
The project originated from discussions at the conference ``Number Theory, Geometry, Moonshine \& Strings II'' organized by
Jeffrey Harvey at the Simons foundation in NYC in March 2018.
Conversations with all participants, but especially John Duncan, Shamit Kachru, Sheldon Katz, and Albrecht Klemm played an important role in our understanding.
We thank them, the organizers and the Simons foundation for support.
We would also like to thank Matthew Dawes, Gebhard~Martin, Greg Martin, Arnav Tripathy, and Max Zimet for useful discussions.

G.~O.~was supported by the National Science Foundation Grant DMS-1440140 while at MSRI, Berkeley in the Spring of 2018.


\section{CHL Models} \label{Section:CHL_models}
In Section~\ref{subsection_symplectic_auto} we review
basic facts on symplectic automorphisms.
An introduction to the subject is Chapter~15 of \cite{Huy}.
We then discuss several topics related to CHL models: their homology and curve classes,
the equality of Donaldson--Thomas and Pandharipande--Thomas invariants,
the degeneration formula to $\mathrm{K3} \times \p^1$,
a computation scheme for the curve counting invariants,
and a conjectural Gromov--Witten/Donald\-son--Thomas correspondence.
A definition of counting invariants for non-geometric CHL models is proposed in Section~\ref{subsection:non-geometric-CHL}.

\subsection{Symplectic automorphisms} \label{subsection_symplectic_auto}
Let $S$ be a complex projective K3 surface
with holomorphic-symplectic form $\sigma \in H^0(S, \Omega_S^2)$.
Let
\[ g : S \to S \]
be an automorphism 
which is \emph{symplectic}, i.e. that satisfies $g^{\ast} \sigma = \sigma$.
We assume that $g$ has finite order $N$.\footnote{ 
See \cite[15.2.5(i)]{Huy} for a projective K3 surface with a symplectic automorphism of infinite order.} 

By the global Torelli theorem
the symplectic automorphism $g$ is uniquely determined by
its induced action on $H^2(S,\BZ)$.
Moreover, by 
\cite[Thm.15.3.13]{Huy}
the action of $g$ on the abstract lattice $H^2(S,\BZ)$ depends
up to an orthogonal transformation of the lattice 
only on the order $N$.
By \cite[15.1]{Huy} the order of $g$ can take every value in the range
\[ 1 \leq N \leq 8. \]

Let $U = \binom{0\ 1}{1\ 0}$ by the hyperbolic lattice. Recall that
\[  \Lambda = H^2(S,\BZ) \cong U^3 \oplus E_8(-1)^2. \]
The \emph{invariant lattice} with respect to $g$ is
\[ \Lambda^g = \{ v \in \Lambda\, | \, g v = v \}. \]
The \emph{coinvariant lattice} of $g$ is the orthogonal complement of the invariant lattice:
\[ \Lambda_g = \left( \Lambda^g \right)^{\perp} \subset H^2(S,\BZ). \]
In particular, $\Lambda_g \otimes \BC$ is the sum of all eigenspaces of the $\BC$-linear extension of $g$ corresponding to eigenvalues different from $1$.
Let $v \in \Lambda_g \otimes \BC$ be an eigenvector to eigenvalue $\lambda \neq 1$. Then
\[ \langle v , \sigma \rangle = \langle g_{\ast} v, g_{\ast} \sigma \rangle = \lambda \langle v, \sigma \rangle \]
and therefore $\langle v , \sigma \rangle = 0$. We conclude $\Lambda_g \subset \mathrm{NS}(S)$.

\begin{figure}
\begin{longtable}{| c | c | c | c | c | c | c | c |}
\hline
$N$ & $2$ & $3$ & $4$ & $5$ & $6$ & $7$ & $8$ \\
\hline
$|\mathrm{Fix}(g)|$ & $8$ & $6$ & $4$ & $4$ & $2$ & $3$ & $2$ \\
\hline
$|\Lambda_g|$ & $8$ & $12$ & $14$ & $16$ & $16$ & $18$ & $18$ \\
\hline
$\rho(S) \geq$ & $9$ & $13$ & $15$ & $17$ & $17$ & $19$ & $19$ \\
\hline
\caption{The number of fixed points,
the rank of the coinvariant lattice and the Picard rank
of a non-trivial symplectic automorphism $g$ of finite order $N$ on a complex projective K3 surface (taken from \cite[15.1]{Huy}).}
\label{Table1}
\end{longtable}
\end{figure}

Consider the projection operator onto the invariant part,
\[ P = \frac{1}{N} \sum_{i=0}^{N-1} g^i \colon H^2(S,\BZ) \to \frac{1}{N} H^2(S,\BZ). \]
Since the image under $P$ of an ample class is ample, there exist an ample invariant class $L \in \mathrm{NS}(S)$.
By the Hodge index theorem $\Lambda_g$ is therefore negative-definite.
Moreover, since $L$ is ample and orthogonal to $\Lambda_g$, the lattice $\Lambda_g$ contains no $(-2)$-classes.

The number of fixpoints, the rank of the co-invariant lattice and a bound for the Picard rank for non-trivial $g$ are listed in Table~\ref{Table1}.

For $N \in \{ 2, \ldots, 8 \}$ the action of the automorphism $g$ and the co-invariant lattices
were explicitly determined in the series of papers \cite{GS, GarS, GarS2}. 
If the K3 surface $S$ is of minimal Picard rank, then its Neron--Severi group is of one of the types listed in \cite[Prop.6.2]{GarS2}.
In the following example we recall the case $N=2$.

\begin{example} 
If $N=2$ then $g : S \to S$ is called a \emph{Nikulin involution}.
Its action on $\Lambda$ is trivial on $U^3$ and interchanges the two copies of $E_8(-1)$.
The invariant and co-invariant lattices are
\[ \Lambda^g = U^3 \oplus E_8(-2), \quad \Lambda_g = E_8(-2), \]
where we have written $E_8(-2)$ for the diagonal and the anti-diagonal in $E_8(-1)^2$ respectively.

Suppose now that $S$ is of minimal Picard rank~$9$ with invariant ample class $L$.
Then by \cite[Prop.2.2]{GS} its Neron-Severi group can be described by one of the following two cases.
In the first case we have
\[ \NS(S) = \BZ L \oplus E_8(-2). \]
We call this case the \emph{untwisted} case.

In the second case, $\NS(S)$ is a finite overlattice of $\BZ L\oplus E_8(-2)$ of degree $2$ obtained
by adjoining a vector $(L/2, v/2)$ for some $v \in E_8(-2)$:
\[ \NS(S) 
= \mathrm{Span}_{\BZ}\Big( \BZ L \oplus E_8(-2), \, (L/2, v/2)  \Big). \]
In particular, since $\NS(S)$ is even, we have $L^2 \equiv 0$ modulo $4$. We call the second case the \emph{twisted} case.

By the Torelli theorem for K3, the moduli space of triples $(S, L, \iota)$ can be described as follows (see also \cite{GS} for details).
If $L^2 \neq 0$ mod $4$ then the moduli space has a single connected component with an open subset parametrizing the untwisted case.
If $L^2 = 0$ mod $4$, then the moduli space has two connected components, corresponding to the untwisted and twisted case respectively.
The moduli space is of dimension $11$.
\end{example}

\begin{rmk} \label{gsdgsd}
The finite groups which act symplectically and faithfully on a given K3 surface $S$ were classified
by Mukai in terms of the Mathieu group, see \cite[Thm.15.3.1]{Huy}.
Aside from the cyclic groups which were discussed above, the following Abelian groups can appear:
\begin{gather*} 
\ZmodN{2}^2,\quad \ZmodN{2}^3, \quad \ZmodN{2}^4, \quad \ZmodN{3}^2, \quad  \ZmodN{4}^2, \\
\ZmodN{2} \times \ZmodN{4}, \quad \ZmodN{2} \times \ZmodN{6}.
\end{gather*}
As before the action of the finite Abelian groups on $H^2(S,\BZ)$ is
unique up to an orthogonal transformation of the lattice; the corresponding (co)invariant lattices have been determined in \cite{GarS2}
and the Neron--Severi groups for minimal Picard rank are listed in \cite[Prop.6.2]{GarS2}.
\end{rmk}

\subsection{Definition} \label{subsec:chl}
Let $g : S \to S$ be a symplectic automorphism of finite order $N$,
let $E$ be a non-singular elliptic curve and let $t \in E$ be a torsion point of order $N$.
Let
\[ X = (S \times E)/ \BZ_N. \]
be the associated CHL Calabi--Yau threefold. We let
\[ \pi : S \times E \to X \]
denote the degree $N$ quotient map, and let
\[ p_1 : X \to S' := S/\BZ_N, \quad p_2 : X \to E' := E / \BZ_N \]
be the maps induced from the projection.
Here $\BZ_N$ acts on $E$ by translation by $t$.

\subsection{Cohomology and $1$-cycles} \label{Subsection_cohomology_and_Neron_Severi}
We describe the cohomology and Neron--Severi group of a CHL model $X$. 
Let $s \in S$ be a fixpoint of $g$ (which exists by Table~\ref{Table1})
and consider the subscheme
\[ (E \times s)/\ZmodN{N} = E' \times s = E'_s \subset X. \]
We often drop the subscript $s$ in $E'_s$.
For any $e \in E$ let also
\[ D_e = \pi(S \times e). \]
We often drop the subscript $e$. By a Mayer-Vietoris argument we have
\begin{align*}
H^2(X,\BZ) & = \mathrm{Ker}(1-g : H^2(S,\BZ) \to H^2(S,\BZ)) \oplus \BZ [D] \\
H^4(X,\BZ) & = \mathrm{Coker}(1-g : H^2(S,\BZ) \to H^2(S,\BZ)) \oplus \BZ[E']
\end{align*}
and
\[ H^4(X,\BZ) = H_2(X,\BZ). \]
In particular, $H_2(X,\BZ)$ might contain torsion.

Consider the
projection operator
\[ P = \frac{1}{N} \sum_{i=0}^{N-1} g^i \colon H^2(S,\BZ) \to \frac{1}{N} H^2(S,\BZ)^g. \]
\begin{lemma} \label{LemmaProj} We have
\[ 
 \mathrm{Coker}\left(1-g : H^2(S,\BZ) \to H^2(S,\BZ)\right)
/ \textup{Torsion} \, \cong \, \mathrm{Im}(P).
 \]
\end{lemma}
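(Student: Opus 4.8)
The plan is to construct a surjection from $\mathrm{Coker}(1-g)$ onto $\mathrm{Im}(P)$ induced by $P$ itself, and then to identify its kernel with the torsion subgroup. Write $\Lambda = H^2(S,\BZ)$ for brevity. First I would record the identity $Pg = P$, which holds because $g$ cyclically permutes the summands in $P = \frac{1}{N}\sum_{i=0}^{N-1} g^i$ (using $g^N = \mathrm{id}$). Consequently $P(1-g) = 0$, so $P$ annihilates $\mathrm{Im}(1-g)$ and therefore descends to a homomorphism
\[ \bar{P} : \mathrm{Coker}(1-g) = \Lambda/\mathrm{Im}(1-g) \longrightarrow \mathrm{Im}(P), \]
which is surjective by the very definition of $\mathrm{Im}(P)$.

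Since $\mathrm{Im}(P)$ is a subgroup of the free group $\frac{1}{N}\Lambda$, it is torsion-free, so $\bar{P}$ kills all torsion and hence factors through a surjection
\[ \tilde{P} : \mathrm{Coker}(1-g)/\mathrm{Torsion} \longrightarrow \mathrm{Im}(P). \]
The essential remaining point is that $\tilde{P}$ is injective. Here I would pass to $\BQ$-coefficients. Because $g$ has finite order $N$, it is semisimple over $\BQ$, so $\Lambda \otimes \BQ = \ker(1-g) \oplus \big(\mathrm{Im}(1-g)\otimes\BQ\big)$ with $1-g$ invertible on the second summand. Moreover $P\otimes\BQ$ is exactly the projection onto $\ker(1-g)$ along $\mathrm{Im}(1-g)\otimes\BQ$, so that $\ker(P\otimes\BQ) = \mathrm{Im}(1-g)\otimes\BQ$. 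Now if $v \in \Lambda$ satisfies $P(v) = 0$, then $v \in \mathrm{Im}(1-g)\otimes\BQ$, whence $mv \in \mathrm{Im}(1-g)$ for some integer $m \geq 1$; thus the class of $v$ is torsion in $\mathrm{Coker}(1-g)$. This shows $\ker \tilde{P} = 0$, and hence $\tilde{P}$ is the desired isomorphism.

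The only genuine subtlety — and the step I would be most careful about — is precisely this last one: a surjection of free abelian groups of equal rank need not be an isomorphism (multiplication by $2$ on $\BZ$ is surjective only rationally), so the equality of ranks coming from the rational decomposition is not by itself enough. What rescues the argument is the concrete computation $\ker(P\otimes\BQ) = \mathrm{Im}(1-g)\otimes\BQ$, which forces every element of $\ker \bar{P}$ to be torsion rather than merely of infinite index. I would also make a point of reading ``$\mathrm{Im}(P)$'' as a subgroup of $\frac{1}{N}\Lambda$ throughout, so that its torsion-freeness — needed for $\bar{P}$ to descend modulo torsion — is manifest.
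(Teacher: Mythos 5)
Your proof is correct and follows essentially the same route as the paper: both use $P\circ(1-g)=0$ to descend $P$ to the cokernel, torsion-freeness of $\mathrm{Im}(P)$ to pass to the torsion-free quotient, and the rational splitting $\Lambda\otimes\BQ = \ker(1-g)\oplus\big(\mathrm{Im}(1-g)\otimes\BQ\big)$ with $\ker(P\otimes\BQ)=\mathrm{Im}(1-g)\otimes\BQ$ to get injectivity (the paper phrases this step contrapositively, showing a nonzero class lifts to $\alpha$ with $P(\alpha)\neq 0$). Your explicit remark about why equality of ranks alone would not suffice is a helpful clarification but does not change the substance.
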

\begin{proof}
Since $g$ is of order $N$ we have $P \circ (1-g) = 0$. Hence $P$ factors through the cokernel of $1-g$.
Since the image has no torsion, we get a natural map
\[ \Coker(1-g) / \textup{Torsion} \to \mathrm{Im}(P). \]
A non-zero element in the left hand side lifts to an element $\alpha \in H^2(S,\BZ)$ which does not lie in $\mathrm{Im}(1-g) \otimes \BQ$.
Since
\[ H^2(S,\BZ) \otimes \BQ = \Null(1-g) \oplus \Null(P) = \mathrm{Im}(P) \oplus \mathrm{Im}(1-g) \]
we have $\mathrm{Im}(1-g) \otimes \BQ = \Null(P) \otimes \BQ$, so $P(\alpha) \neq 0$.
\end{proof}

From now on we will work only with integral (co)homology modulo torsion and will write
$H_k(X,\BZ)$ for $H_k(X,\BZ)/\text{Torsion}$, etc.
With this convention by Lemma~\ref{LemmaProj} we therefore have
\begin{equation} H_2(X,\BZ) \cong \mathrm{Im}(P) \oplus \BZ[E']. \label{34sdfsd} \end{equation}
Explicitly, the isomorphisms sends $\beta \in H_2(X,\BZ)$ to
$(\frac{1}{n} \alpha, d)$ where
\[ \pi^{\ast} \beta = \alpha + d [E] \]
where we have used the K\"unneth theorem to identify
\[ H_2(S \times E, \BZ) = H_2(S,\BZ) \oplus H_2(E,\BZ). \]
%
The inverse of \eqref{34sdfsd} is
\[
\mathrm{Im}(P) \oplus \BZ \mapsto H_2(X,\BZ),\ (\gamma,d) \mapsto \pi_{\ast} \iota_{S \ast} \gamma + d [E']
\]
where $\iota_S : S \to S \times E$ is the inclusion of a fiber of $p_2$.
We often identify elements in $H_2(X,\BZ)$ with their image under \eqref{34sdfsd}.

The group of $1$-cycles $N_1(X)$ on $X$ up to numerical equivalence and torsion described as follows.
\begin{lemma} Under the identification \eqref{34sdfsd} we have
\[ N_1(X) = P(N_1(S)) \oplus \BZ [E']. \]
\end{lemma}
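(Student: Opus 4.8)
The plan is to prove the equality $N_1(X) = P(N_1(S)) \oplus \BZ[E']$ by combining the already-established homological decomposition \eqref{34sdfsd} with the observation that numerical equivalence of curves on $X$ is controlled by the intersection pairing against divisors, which in turn can be pulled back to the $\BZ_N$-cover $S \times E$. First I would note that under the isomorphism \eqref{34sdfsd} we have $H_2(X,\BZ) \cong \mathrm{Im}(P) \oplus \BZ[E']$, and that $N_1(X)$ is by definition the image of algebraic cycles inside $H_2(X,\BZ) \otimes \BQ$ modulo numerical equivalence and torsion. So the content of the statement is that an element $(\gamma,d) \in \mathrm{Im}(P) \oplus \BZ$ is realized by an algebraic $1$-cycle on $X$ if and only if $\gamma \in P(N_1(S))$ (the $\BZ[E']$ factor being automatically algebraic, as $E'$ is itself a curve on $X$).

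The key steps are as follows. For the inclusion $N_1(X) \supseteq P(N_1(S)) \oplus \BZ[E']$, I would observe that $[E'] = \pi_\ast[E \times s]$ is the class of an honest curve, and that for any algebraic curve class $\gamma \in N_1(S)$ the pushforward $\pi_\ast \iota_{S\ast}\gamma$ (using the inverse map to \eqref{34sdfsd}) is an algebraic $1$-cycle on $X$ whose image under \eqref{34sdfsd} is $(P(\gamma), 0)$; here one uses that the $\BZ_N$-averaging appearing in $P$ corresponds geometrically to summing over the $N$ translates of the fiber $\iota_S(S)$ under the deck transformations, so the image of the projector $P$ lands precisely in the numerical span of algebraic classes. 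For the reverse inclusion $N_1(X) \subseteq P(N_1(S)) \oplus \BZ[E']$, I would take an algebraic $1$-cycle $C$ on $X$, pull it back to $\widetilde{C} = \pi^\ast C$ on $S \times E$ (or equivalently pull back via the étale cover and use $\pi^\ast \pi_\ast = \sum_i g^i_\ast$), and use the Künneth decomposition $N_1(S \times E) = N_1(S) \oplus \BZ[E]$ together with the fact that $\widetilde{C}$ is $\BZ_N$-invariant. Invariance forces the $N_1(S)$-component of $\widetilde C$ to lie in the invariant part $N_1(S)^g$, hence equal to its own average, so that after dividing by $N$ the resulting class in $\mathrm{Im}(P)$ lies in $P(N_1(S))$.

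The main obstacle I expect is the careful bookkeeping of the rational normalization factors $\frac{1}{N}$ and the torsion, in particular verifying that the algebraicity condition really is $\gamma \in P(N_1(S))$ rather than the a priori larger group $\mathrm{Im}(P) \cap \big(P(N_1(S)) \otimes \BQ\big)$. The subtlety is that $\mathrm{Im}(P)$ is defined as a lattice inside $\frac{1}{N}H^2(S,\BZ)^g$, and one must check that an integral element of $\mathrm{Im}(P)$ is algebraic on $X$ exactly when it is the $P$-image of an integral algebraic class on $S$; this requires knowing that $P(N_1(S))$ is saturated appropriately, or else arguing directly that any algebraic cycle on $X$ pulls back to an \emph{integral} $\BZ_N$-invariant algebraic cycle on the cover whose $S$-component is genuinely in $N_1(S)$ (not merely in $N_1(S)\otimes\BQ$). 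Since $\pi$ is étale of degree $N$ and $\pi_\ast\pi^\ast = N$, the passage between integral cycles upstairs and downstairs is clean up to the factor $N$, and I expect this to resolve the saturation issue, but it is the point that demands genuine care rather than formal manipulation.
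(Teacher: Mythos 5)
Your overall route is the same as the paper's: the inclusion $\supset$ comes from $\pi_{*}N_1(S\times E)\subset N_1(X)$ together with the existence of the curve $E'$, and the inclusion $\subset$ comes from pulling back to $S\times E$, applying the K\"unneth decomposition, and using $\BZ_N$-invariance of $\pi^{*}\alpha$. You have also correctly isolated the one nontrivial point: for $\alpha\in N_1(X)$ with $\pi^{*}\alpha=(\alpha_1,\alpha_2)$ one knows $\alpha_1\in N_1(S)^g$ and, from the homological isomorphism \eqref{34sdfsd}, that $\frac{1}{N}\alpha_1\in\mathrm{Im}(P)$; what must be shown is that $\frac{1}{N}\alpha_1$ lies in $P(N_1(S))$ and not merely in the a priori larger group $\mathrm{Im}(P)\cap\big(N_1(S)\otimes\BQ\big)$, i.e.\ that $P(N_1(S))$ is saturated in $\mathrm{Im}(P)$.

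However, the mechanism you propose for closing this --- that $\pi$ is \'etale of degree $N$ with $\pi_{*}\pi^{*}=N$, so the passage between integral cycles upstairs and downstairs is ``clean up to the factor $N$'' --- does not do the job: that factor of $N$ is exactly what creates the discrepancy between $\alpha_1\in P(N_1(S))$ (immediate from invariance, since $P(\alpha_1)=\alpha_1$) and $\frac{1}{N}\alpha_1\in P(N_1(S))$ (what is actually needed). The missing ingredient is the fact, established in Section~\ref{subsection_symplectic_auto} from the symplectic condition $g^{*}\sigma=\sigma$, that the coinvariant lattice satisfies $\Lambda_g\subset\NS(S)$. Granting this, saturation follows: write $\frac{1}{N}\alpha_1=P(w)$ with $w\in H^2(S,\BZ)$; then $w-\frac{1}{N}\alpha_1=(1-P)(w)\in\Lambda_g\otimes\BQ\subset\NS(S)\otimes\BQ$, hence $w\in\big(\NS(S)\otimes\BQ\big)\cap H^2(S,\BZ)=\NS(S)=N_1(S)$ and therefore $\frac{1}{N}\alpha_1=P(w)\in P(N_1(S))$. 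This step genuinely uses that $g$ is symplectic --- for a non-symplectic automorphism the coinvariant lattice need not be algebraic and the claimed equality can fail --- so it cannot be recovered from \'etale cycle-theoretic bookkeeping alone.
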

\begin{proof} The inclusion $\supset$ follows from $\pi_{\ast} N_1(S \times E) \subset N_1(X)$ and the existence of $E'$.
For the other direction, if $\alpha \in N_1(X)$ then $\pi^{\ast} \alpha = (\alpha_1, \alpha_2)$ with $\alpha_1 \in  N_1(S)^g \subset \Lambda^g$, so $\frac{1}{n} \alpha_1 \in \Lambda^g$.
\end{proof}

\subsection{Pandharipande--Thomas theory} \label{subsection_PT_theory}
A stable pair $(\mathcal{F},s)$ on $X$
is a coherent sheaf $\mathcal{F}$ supported in dimension $1$ together with a section
$s \in H^0(X, \mathcal{F})$ satisfying the following stability conditions:
\begin{enumerate}
\item[(i)] the sheaf $\mathcal{F}$ is pure
\item[(ii)] the cokernel of $s$ is $0$-dimensional.
\end{enumerate}
Let $P_n(X, \beta)$ be the moduli space of stable pairs
with Euler characteristic and
the class of the support $C$ of $\mathcal{F}$ satisfying
\[
\chi(\mathcal{F})=n\in \mathbb{Z}, \quad \ [C]= \beta \in H_2(X,\mathbb{Z})\,.
\]

Consider a curve class
\[ \beta = (\gamma,d) \in H_2(X,\BZ). \]
The elliptic curve $E$ acts on the moduli space $P_n(X,\beta)$ by translation.
If $\gamma > 0$ or $n \neq 0$ this action has finite stabilizers and we define reduced Pandharipande--Thomas invariants by
\[ \PT^X_{n,\beta} = \int_{ P_n(X,\beta) / E} \nu \dd{e} \]
where $\nu : P_n(X,\beta) / E \to \BZ$ is the Behrend function.

\begin{prop}
If $\gamma > 0$, then $\PT^X_{n,\beta} = \DT^X_{n,\beta}$.
\end{prop}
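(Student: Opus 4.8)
The plan is to recognize the proposition as the Donaldson--Thomas/Pandharipande--Thomas correspondence for the $E$-reduced, Behrend-weighted invariants, and to prove it by transporting the known correspondence for a Calabi--Yau threefold through the quotient by the translation action. The essential simplification, which forces the two theories to agree \emph{termwise} rather than only up to a universal factor, is that the topological Euler characteristic of $X$ vanishes.

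First I would recall the Behrend-weighted DT/PT correspondence for a projective Calabi--Yau threefold, as established by Bridgeland and Toda via Hall-algebra wall-crossing. Writing $e(\,\cdot\,,\nu) = \int \nu\,\dd e$ for the Behrend-weighted Euler characteristic, it asserts for a fixed effective class $\beta=(\gamma,d)$ a factorization of generating series
\[ \sum_n e\big(\Hilb^n(X,\beta),\nu\big)\,q^n \;=\; \Big(\sum_n e\big(\Hilb^n(X),\nu\big)\,q^n\Big)\cdot\Big(\sum_n e\big(P_n(X,\beta),\nu\big)\,q^n\Big), \]
whose first factor on the right is the degree-zero series $M(-q)^{e(X)}$, with $M$ the MacMahon function. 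Its presence reflects that an ideal sheaf, unlike a stable pair, may carry zero-dimensional components floating freely on all of $X$. Since $\BZ_N$ acts freely on $S\times E$ (translation by a nonzero torsion point has no fixed points), we compute $e(X)=e(S\times E)/N=e(S)\,e(E)/N=0$ because $e(E)=0$. Hence $M(-q)^{e(X)}=1$, the correction factor is trivial, and the two series coincide coefficient by coefficient.

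It then remains to descend this identity to the $E$-reduced invariants. The wall-crossing relating ideal sheaves and stable pairs is natural in $X$, hence commutes with the translation action of $E$, and the Behrend function is $E$-invariant. For $\gamma>0$ the translation action has finite stabilizers on both $\Hilb^n(X,\beta)$ and $P_n(X,\beta)$, so the reduced invariants $\DT^X_{n,\beta}$ and $\PT^X_{n,\beta}$ are defined and the position of the (now rigidified) support curve absorbs the $E$-freedom. Consequently the floating zero-dimensional components still range over all of $X$, the reduced correction factor is again $M(-q)^{e(X)}=1$, and termwise equality yields $\DT^X_{n,\beta}=\PT^X_{n,\beta}$.

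The main obstacle is precisely this last descent: one must verify that the Hall-algebra identities, equivalently the Behrend-function wall-crossing identities, remain valid after quotienting by the free translation action, and in particular that the reduced correction factor is genuinely governed by $e(X)$ and not by $e(X/E)=e(S/\BZ_N)$. The latter does not vanish --- for $N=2$ it equals $16$ --- so the argument is not a formality: the hypothesis $\gamma>0$ is exactly what rigidifies the $E$-action through the support curve and ensures that the degree-zero contribution is controlled by $X$ itself.
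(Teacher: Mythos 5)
There is a genuine gap, and it sits exactly where you flag it. The global Bridgeland--Toda wall-crossing identity you invoke is an identity between the \emph{unreduced} Behrend-weighted invariants of $X$, and for $\gamma>0$ every coefficient on both sides vanishes: the translation action of $E$ on $\Hilb^n(X,\beta)$ and on $P_n(X,\beta)$ has finite stabilizers, so every orbit has zero Euler characteristic and the weighted Euler characteristics are zero. The identity therefore reads $0 = M(-q)^{0}\cdot 0$ and carries no information about the reduced invariants, which are by definition integrals over the quotient stacks $\Hilb^n(X,\beta)/E$ and $P_n(X,\beta)/E$. Your ``descent'' step is thus not the descent of an existing identity but the entire content of the proposition, and the naturality of the wall-crossing under the $E$-action does not by itself produce an identity on the quotients.

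What does work --- and is what the paper does, citing the argument of \cite[4.11]{Red} --- is to use the $C$-local DT/PT correspondence: for each point of the Chow variety of curves, the Behrend-weighted series of the fibre of the Hilbert scheme over that cycle equals $M(-q)^{e(X)}$ times the corresponding local PT series. Because this identity is local over $\mathrm{Chow}(X,\beta)$, it can be integrated over the quotient $\mathrm{Chow}(X,\beta)/E$ (here $\gamma>0$ guarantees finite stabilizers on the Chow variety), and the correction factor is $M(-q)^{e(X)}=1$ since $e(X)=0$. Your closing heuristic --- that the floating points range over all of $X$ while only the position of the support curve is quotiented, so the correction is governed by $e(X)$ rather than by $e(S/\BZ_N)$ --- is precisely the content of this local statement; it needs to be cited or proved, not asserted. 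As written, the proposal identifies the right picture and the right danger, but does not close the argument.
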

\begin{proof}
This follows by the argument of \cite[4.11]{Red} from the $C$-local DT/PT correspondence
by integrating over the quotient of the Chow variety of curves by $E$.\footnote{
If $\gamma = 0$ then the proposition is false, and
$\PT_{n,\beta}^X$ and $\DT_{n,\beta}^X$ differ by a non-zero wall-crossing contribution,
see \cite{OS} for the case $S \times E$.}
\end{proof}

If $\gamma > 0$ then the moduli space $P_n(X,\beta)$ also carries a reduced virtual fundamental class
\[ \big[ P_n(X,\beta) \big]^{\text{red}} \in H_{\ast}( P_n(X,\beta) ) \]
obtained from reducing the perfect obstruction theory of the moduli space
by the holomorphic $2$-form pulled back from $S'$.\footnote{
The holomorphic $2$-form produces a reduced virtual class by the cosection localization method of Kiem--Li \cite{KL}.
But the argument of \cite[Prop.1]{Red} and using that the automorphism $g : S \to S$
extends to the twister family \cite[15.1.2, Footnote 2]{Huy}
even yields a reduced perfect obstruction theory (a strictly stronger statement).
}
We will relate the invariants defined by cutting down the reduced virtual class by an insertion
with the Pandharipande--Thomas invariants $\PT^X_{n,\beta}$.
Let 
\[ q : S \to S' = S / \BZ_n \]
be the projection and let $\gamma^{\vee} \in H^2(S', \BQ)$ be any class such that
\[ \int_S \gamma \cup q^{\ast}(\gamma^{\vee}) = 1. \]
Recall also the divisor $D = D_e = \pi(S \times e)$. We have
\[ p_2^{\ast} [\pt] = n [D]. \]
Define the reduced incidence Pandharipande--Thomas invariant
\[ \widetilde{\PT}^X_{n,\beta} = \int_{ [ P_n(X,\beta) ]^{\text{red}} } \tau_0( [D] \cup p_1^{\ast}(\gamma^{\vee}) ), \]
where the insertion operator $\tau_0( \cdot )$ is defined in \cite{PT1}.

By arguments parallel to \cite{Red} we have the following comparision. 

\begin{prop} \label{Prop:quotient=incidence}
If $\gamma>0$, then $\PT^X_{n,\beta} = \widetilde{\PT}^X_{n,\beta}$.
\end{prop}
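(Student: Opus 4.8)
The plan is to closely follow the strategy of \cite[\S4.11]{Red} and its surrounding discussion \cite[Prop.~1]{Red}, transferring it from the $S \times E$ geometry to the CHL quotient $X$. Both sides count the same objects---$E$-orbits of stable pairs on $X$---and the asserted equality is at heart a matching of two bookkeeping devices for the one-parameter family of $E$-translates: the quotient construction defining $\PT^X_{n,\beta}$ on the one hand, and the incidence insertion against the reduced virtual class defining $\widetilde{\PT}^X_{n,\beta}$ on the other. The key structural input is that the reduced virtual class has virtual dimension $1$, exactly one more than the ordinary (vanishing) virtual dimension, and that this extra dimension is precisely the direction of the $E$-translation. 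The insertion $\tau_0([D] \cup p_1^{\ast}(\gamma^{\vee}))$ is engineered to cut down exactly this direction.

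First I would analyze the two factors of the insertion separately. The divisor $D = D_e = \pi(S \times e)$ is a fiber of $p_2 : X \to E'$, so intersecting the one-dimensional $E$-orbit of a stable pair with $D$ produces finitely many incidence points and slices each orbit transversally. The second factor $p_1^{\ast}(\gamma^{\vee})$, with $\gamma^{\vee} \in H^2(S',\BQ)$ normalized by $\int_S \gamma \cup q^{\ast}(\gamma^{\vee}) = 1$, records the degree of the support over $S'$ and pins the multiplicity with which each orbit is counted down to exactly $1$. Using the definition of $\tau_0$ via the universal sheaf, the projection formula, and the identity $p_2^{\ast}[\pt] = n[D]$, I would rewrite $\int_{[P_n(X,\beta)]^{\mathrm{red}}} \tau_0([D] \cup p_1^{\ast}(\gamma^{\vee}))$ as the integral of the ordinary Behrend function over the quotient, i.e. as $\int_{P_n(X,\beta)/E} \nu \dd{e}$, which is by definition $\PT^X_{n,\beta}$.

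The technical crux---and the step I expect to be the main obstacle---is justifying the replacement of the reduced virtual class by a Behrend-weighted stacky Euler characteristic of the quotient. This rests on two compatibilities: first, that the cosection reducing the obstruction theory arises from the $E$-translation (equivalently, from the holomorphic $2$-form pulled back from $S'$), so that capping the reduced class with $\tau_0([D])$ genuinely integrates over an $E$-slice; and second, that the Behrend function on $P_n(X,\beta)$ descends to the quotient and matches the local virtual multiplicities, as in \cite[Prop.~1]{Red}. The hypothesis $\gamma > 0$ is essential here: it guarantees that the $E$-action has finite stabilizers (so the quotient is a Deligne--Mumford stack with well-defined stacky Euler characteristic) and that the support of every stable pair meets $D$. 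Care is needed because the stabilizers may be nontrivial and $\gamma^{\vee}$ is only a rational class, so the normalization and the stacky counting must be tracked together; this is precisely the bookkeeping carried out in \cite{Red}, which transfers to the present setting without essential change.
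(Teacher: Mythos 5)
Your proposal is correct and follows the same route the paper takes: the paper gives no independent argument for Proposition~\ref{Prop:quotient=incidence} but simply invokes "arguments parallel to \cite{Red}," and your sketch is a faithful elaboration of exactly those arguments — the incidence insertion $\tau_0([D]\cup p_1^{\ast}(\gamma^{\vee}))$ cutting the one-dimensional $E$-translation direction of the reduced class, the normalization pinning the multiplicity to $1$, and the descent of the Behrend function to the finite-stabilizer quotient guaranteed by $\gamma>0$. Nothing in your outline deviates from or falls short of what the authors intend.
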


By deformation invariance of the reduced virtual class the $\widetilde{\PT}^X_{n,\beta}$ are invariant under deformations of $(X,\beta)$ which
keep the class $\beta$ algebraic.
Hence Proposition~\ref{Prop:quotient=incidence} implies the deformation invariance of $\PT^X_{n,\beta}$.

\subsection{Rubber invariants}
We relate the Pandhari\-pande--Thomas invariants of $X$ to rubber invariants on $\mathrm{K3} \times \p^1$.
These are defined as follows.
Consider the relative geometry
\begin{equation} S \times \p^1 / \{ S_0, S_{\infty} \} \label{relative} \end{equation}
where $S_0, S_{\infty}$ are the fiber over $0, \infty \in \p^1$ respectively. 
Let 
\[ P_{n}^{\sim}(S \times \p^1 / \{ S_0, S_{\infty} \}, (\gamma,d)) \]
be the moduli space of stable pairs on the relative geometry \eqref{relative}
modulo the $\BC^{\ast}$-scalling on $\p^1 / \{ 0, \infty \}$ (this is
also called the moduli space of stable pairs on the rubber of \eqref{relative}, see \cite{K3xE}).
The moduli space is of reduced virtual dimension $2d$ (assuming $\gamma > 0$)
and admits evaluation maps
\[ \ev_0, \ev_{\infty} : P_{n}^{\sim}(S \times \p^1 / \{ S_0, S_{\infty} \}, (\gamma,d)) \to \Hilb^d(S) \]
over the points $0, \infty \in \p^1$ respectively.

Consider cohomology classes
\[ \mu, \nu \in H^{\ast}(\Hilb^d(S), \BQ). \]
We define the rubber Pandharipande--Thomas invariants by
\[
\PT^{S \times \p^1}_{n, (\gamma,d)}( \mu, \nu )
=
\int_{ 
\left[ P_{n}^{\sim}(S \times \p^1 / \{ S_0, S_{\infty} \}, (\gamma,d)) \right]^{\text{red}}
} 
\ev_0^{\ast}(\mu) \cup \ev_{\infty}^{\ast}(\nu).
\]

\subsection{Degeneration to $\mathrm{K3} \times \p^1$} \label{subsec:degeneration}
Let 
\[ \CE \to \Delta \]
be a non-singular elliptically fibered surface over a disk $\Delta \subset \BC$ such that the following conditions hold:
\begin{enumerate}
\item The fiber over $1 \in \Delta$ is isomorphic to $E$.
\item The fiber over $0 \in \Delta$ is isomorphic to a cycle of $N$ copies of $\p^1$ (a $I_n$ fiber in Kodaira's classification)
\item There exist two sections $s_0, s_1$. We take $s_0$ to be the zero section,
and we require the section $s_1$ to be of order $N$ with respect to the group law defined by $s_0$.
\item The induced action of $s_1$ on the fiber over $0$ sends the $i$-th copy to the $(i+1)$-th copy (modulo $N$).
\end{enumerate}
Consider the order $N$ automorphism on the product $S \times \CE$
which acts by $g$ on the first, and by addition by $s_1$ on the second factor.
The quotient by this free action is a non-singular $4$-fold
\[ \CX = (S \times \CE)/ \ZmodN{N}. \]
Let
$f : \CX\to \p^1$
be the fibration induced by $\CE \to \p^1$.
We have
\[ f^{-1}(1) = X, \quad f^{-1}(0) = (S \times \p^1)/\sim \]
where the cylinder $S \times \p^1$ is glued to itself via the monodromy relation
\[ (s,0) \sim (g(s), \infty) \text{ for all } s \in S. \]
Hence $\CX$ is the total space of a degeneration
\begin{equation} X \, \rightsquigarrow \, (S \times \p^1)/\sim. \label{degeneration} \end{equation}

By Proposition~\ref{Prop:quotient=incidence} the reduced Pandharipande--Thomas invariant
is expressed in terms of an integral over the reduced class.
Hence we may apply the degeneration formula to the degeneration \eqref{degeneration}.
The result, after a de-rigidification argument, is as follows.
Let
\[ g : \Hilb^d(S) \to \Hilb^d(S) \]
be the automorphism induced by $g$. Consider its graph
\[ \Gamma_g = \{\, (z, gz) \, | \, z \in \Hilb^d(S) \, \} \subset \Hilb^d(S) \times \Hilb^d(S). \]
Let $H_2(S,\BZ)_{>0}$ be the set of effective curve classes on $S$.
Then
\begin{equation} \label{deg_formula}
\PT^X_{n, (\gamma,d)}
=
\frac{1}{N} \sum_{ \substack{ \tilde{\gamma} \in H_2(S,\BZ)_{>0} \\ P(\tilde{\gamma}) = \gamma }}
\PT^{S \times \p^1}_{n + d, (\tilde{\gamma},d)}( \Gamma_g ).
\end{equation}

%

\subsection{Computation scheme} \label{subsec:computation_scheme}
Modulo known conjectures, the degeneration formula \eqref{deg_formula} yields a computation scheme for the invariants $\PT^X_{n,(\gamma,d)}$ of any CHL model $X$ as follows.
By \eqref{deg_formula} to determine $\PT_{n,(\gamma,d)}$
it is enough to know the rubber invariants $\PT_{n,(\gamma,d)}(\mu,\nu)$.
These are known conjecturally known as follows.

First, the rubber Pandharipande--Thomas invariants
are related by a (conjectural) GW/PT correspondence to rubber Gromov--Witten invariants of $\mathrm{K3} \times \p^1$ \cite{K3xE, K3xP1}.
After applying the product formula on the Gromov--Witten side,
Conjecture~C2 of \cite{K3xE} then expresses invariants for imprimitive classes $\gamma$
in terms of invariants where $\gamma$ is primitive.
Hence we are reduced to the case where $\gamma$ is primitive.

Second, by the conjectural PT/Hilb correspondence of \cite[Sec.5]{K3xE} the rubber invariants of $\mathrm{K3} \times \p^1$
for primitive $\gamma$
are determined by two-point genus $0$ Gromov--Witten invariants of the Hilbert scheme of points $\Hilb^d \mathrm{K3}$.
An effective conjectural formula for these invariants was presented in \cite{HilbK3} (see also \cite{ObPix} for a more explicit presentation).
This completes the scheme.

The scheme we described is effective, i.e. for any given $n$ and $(\gamma,d)$ the invariant $\PT_{n, (\gamma,d)}$
can be computed in finite time.
For us this was one important source of computational evidence for the conjectures in the paper.
However, at present it appears difficult to prove any implications or explicit formulas from this algorithm.\footnote{
This is not unlike the case of the quintic threefold which has been 'algorithmically' solved a long time ago \cite{MP}.
However, explicit formulas for the quintic are known only in low genus.}

\subsection{Gromov--Witten theory}
\label{subsection:GWtheory}
Let $\Mbar^{\bullet}_{h,n}(X,\beta)$ be the moduli space of stable maps $f : C \to X$ from possibly disconnected
$n$-marked curves $C$ of genus $h$ representing the curve class
\[ f_{\ast} [C] = \beta = (\gamma,d) \in H_2(X,\BZ). \]
If $\gamma > 0$ the moduli space
carries a reduced virtual fundamental class
\[ \big[ \Mbar^{\bullet}_{h,n}(X,\beta) \big]^{\text{red}} \in H_{2(n+1)}(\Mbar^{\bullet}_{h,n}(X,\beta)). \]
Reduced Gromov--Witten invariants of $X$ are defined by
\[ \mathsf{N}_{h,\beta} = \int_{ [ \Mbar^{\bullet}_{g,1}(X,\beta) ]^{\text{red}} } \ev_1^{\ast}( [D] \cup p_1^{\ast}( \gamma^{\vee} )) \]
where $\ev_1 :  \Mbar^{\bullet}_{g,1}(X,\beta) \to X$ is the evaluation map at the first marking.

By arguments parallel to \cite[Sec.4]{Red} the formal Laurent series
\[ \sum_{n \in \BZ} \PT_{n,\beta} y^n \]
is the expansion of a rational function in $y$. Hence the variable change
$y = e^{iu}$ is well-defined.

\begin{conj} If $\gamma > 0$, then the GW/PT correspondence holds:
\[ \sum_{h \in \BZ} \mathsf{N}_{h,\beta} u^{2h-2} = \sum_{n \in \BZ} \PT_{n,\beta} y^n \]
under the variable change $y = - e^{iu}$.
\end{conj}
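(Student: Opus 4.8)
The plan is to deduce the correspondence from its analogue for the relative geometry $S \times \p^1 / \{ S_0, S_\infty \}$ by means of the degeneration \eqref{degeneration}. Both sides of the claimed identity are governed by degeneration formulas: the Pandharipande--Thomas side by \eqref{deg_formula}, and the Gromov--Witten side by the parallel reduced degeneration formula for $[ \Mbar^{\bullet}_{h,n}(X,\beta) ]^{\mathrm{red}}$. The strategy is first to establish a \emph{rubber} GW/PT correspondence on $\mathrm{K3} \times \p^1$, matching the series $\sum_n \PT^{S \times \p^1}_{n,(\gamma,d)}(\mu,\nu)\, y^n$ with the corresponding rubber Gromov--Witten series under $y = -e^{iu}$, and then to assemble the two degeneration formulas into the correspondence for $X$.

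The main point in the assembly is to check compatibility with the variable change. The PT degeneration formula \eqref{deg_formula} shifts the Euler characteristic by $n \mapsto n+d$ and sums over effective splittings $\tilde\gamma$ with $P(\tilde\gamma) = \gamma$; the reduced Gromov--Witten degeneration formula produces the identical summation over $\tilde\gamma$ together with the graph $\Gamma_g$. Provided the rubber correspondence holds with the substitution $y = -e^{iu}$, these contributions match term by term, with the genus exponent $2h-2$ on the Gromov--Witten side accounted for by the Euler-characteristic shift and the reduced virtual dimension $2d$ of the rubber moduli space. Deformation invariance of both reduced theories, established for PT in Proposition~\ref{Prop:quotient=incidence}, then permits one to specialize to any convenient K3 surface in the given polarization class during the computation.

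It remains to prove the rubber GW/PT correspondence on $\mathrm{K3} \times \p^1$, and here I would route both sides through the Hilbert scheme of points. As recalled in Section~\ref{subsec:computation_scheme}, the PT/Hilb correspondence of \cite{K3xE} identifies the rubber Pandharipande--Thomas invariants with two-point genus $0$ Gromov--Witten invariants of $\Hilb^d(\mathrm{K3})$; a parallel GW/Hilb correspondence identifies the rubber Gromov--Witten invariants of $\mathrm{K3} \times \p^1$ with the same two-point invariants of $\Hilb^d(\mathrm{K3})$. Since both the PT and GW series are thereby computed by the quantum multiplication on $H^{\ast}(\Hilb^d(\mathrm{K3}))$, the rubber correspondence follows from the agreement of these two descriptions, the substitution $y = -e^{iu}$ being exactly the one relating the PT and GW vertices of the triangle of correspondences.

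The main obstacle is that this rubber correspondence for $\mathrm{K3} \times \p^1$ is not presently available in closed form: it rests on the PT/Hilb and GW/Hilb correspondences of \cite{K3xE, HilbK3}, which remain conjectural for general curve classes. A complete proof would therefore require establishing those deeper correspondences through the operator formalism on the Fock space $\bigoplus_d H^{\ast}(\Hilb^d(S))$, controlling the quantum product on $\Hilb^d(\mathrm{K3})$ for all $d$ simultaneously. Short of this, the conjecture can be verified unconditionally in the base cases computed by the topological vertex in Theorems~\ref{thm:intro1} and~\ref{thm:intro2}, where both the $\PT$ and Gromov--Witten series are explicit rational functions and the substitution $y = -e^{iu}$ can be checked directly.
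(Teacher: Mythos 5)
The statement you are trying to prove is stated in the paper as a \emph{conjecture}: the authors offer no proof, only the remark (via arguments parallel to \cite[Sec.4]{Red}) that the series $\sum_n \PT_{n,\beta}\, y^n$ is a rational function of $y$, so that the substitution $y=-e^{iu}$ is meaningful. Your proposal is therefore not being measured against an argument in the paper, and, as you yourself concede in the last paragraph, it is not a proof either: it reduces the conjecture to the rubber GW/PT correspondence on $\mathrm{K3}\times\p^1$, and that in turn to the PT/Hilb and GW/Hilb correspondences of \cite{K3xE, HilbK3}, all of which are open. Reducing one conjecture to a chain of deeper conjectures is a reasonable research strategy (and is essentially the ``computation scheme'' of Section~\ref{subsec:computation_scheme}), but it does not establish the statement.

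Two further points deserve care even at the level of the reduction. First, the compatibility of the two degeneration formulas under $y=-e^{iu}$ is not a term-by-term triviality: the Gromov--Witten degeneration formula involves sums over cohomology-weighted relative partitions with combinatorial factors, and matching it against \eqref{deg_formula} requires the relative/rubber GW/PT correspondence \emph{with relative insertions}, including the normalizations $(-iu)^{\ell(\mu)+\ell(\nu)}$ and the shift between $y^{n-d}$ and $u^{2h-2}$ that repackage the weighted partitions into the diagonal class $\Gamma_g$ on $\Hilb^d(S)\times\Hilb^d(S)$. This bookkeeping is exactly where the substitution $y=-e^{iu}$ enters and cannot be absorbed into ``these contributions match term by term.'' Second, your closing claim that the conjecture ``can be verified unconditionally'' in the base cases of Theorems~\ref{thm:intro1} and~\ref{thm:intro2} is not correct: those theorems compute only the Donaldson--Thomas/Pandharipande--Thomas side. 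No reduced Gromov--Witten invariants of $X$ are computed anywhere in the paper, so there is no independent Gromov--Witten series against which to check the correspondence in those cases.
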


\subsection{Non-geometric CHL models}
\label{subsection:non-geometric-CHL}
The Mukai lattice is the group $H^{\ast}(S,\BZ)$ 
together with the Mukai pairing defined by
\[ \big( (r_1, D_1, n_1) , (r_2, D_2, n_2) \big) = r_1 n_1 + n_1 r_2 - \int_{S} D_1 \cup D_2 \]
for all 
\[ (r_i, D_i, n_i) \in H^{\ast}(S,\BZ) = H^0(S,\BZ) \oplus H^2(S,\BZ) \oplus H^4(S,\BZ) \]
where we have identified $H^0(S,\BZ) = \BZ$ and $H^4(S,\BZ) = \BZ$.

A derived auto-equivalence
\[ g : D^b(S) \to D^b(S) \]
induces an isometry of the Mukai lattice:
\[ g_{\ast} : H^{\ast}(S,\BZ) \to H^{\ast}(S,\BZ). \]
We say $g$ is \emph{symplectic} if
\[ g_{\ast}|_{H^{2,0}(S)} = \id. \]

Let $g : D^b(S) \to D^b(S)$ be a symplectic auto-equivalence which is of finite order $N$ and preserves a Bridgeland stability condition.
Let $e_0 \in E$ be a $N$-torsion point on an elliptic curve $E$,
let $t_{e_0} : E \to E$ be the translation by $e_0$,
and let $t_{e_0 \ast} : D^b(E) \to D^b(E)$ be the induced action on the derived category.
Tensoring the kernel of $g$ with the kernel of $t_{e_0 \ast}$ induces a order $N$ derived auto-equivalence
\[ \tilde{g} = g \boxtimes t_{e_0 \ast} : D^b(S \times E) \to D^b(S \times E) \]
which defines an action of $\BZ_N$ on $D^b(S \times E)$.
We call the pair
\begin{equation} \left(D^b(S\times E), \tilde{g} \right) \label{non_geom_chl} \end{equation}
a \emph{non-geometric} or \emph{non-commutative CHL model}.

We would like to define invariants which count stable sheaves on the non-commutative CHL model \eqref{non_geom_chl},
i.e. some form of $\BZ_N$-equivariant stable complexes in $D^b(S \times E)$.
These invariants should correspond, via an analog of Conjecture~\ref{Conj_intro_1}, to the
$g$-twined elliptic genera which appear in Mathieu or Conway moonshine \cite{DM2}
(by \cite{GHV2,Huy2} $g$ induces an element in the Conway group).
We do not address this task here directly.
Instead following a proposal of Shamit Kachru we define invariants
which should be equivalent to such a count.
The idea is to start with the degeneration formula \eqref{deg_formula}.
The right hand side in \eqref{deg_formula} only depends on
the action on cohomology which a symplectic automorphism induces and not on the symplectic automorphism itself.
We will define Donaldson--Thomas invariants of a non-commutative CHL by
the right hand side of \eqref{deg_formula} but using the induced action $g_{\ast}$
of a derived auto-equivalence $g$.
Intuitively this corresponds to ''gluing'' the cylinder $S \times \p^1$ with respect to the auto-equivalence $g$.
A careful definition (to make sure everything is well-defined) proceeds as follows.

The definition requires a conjectural invariance property of the rubber invariant.
Let $\gamma \in H_2(S,\BZ)$ be an non-zero curve class.
Let 
\[ \varphi : H^{\ast}(S,\BR) \to H^{\ast}(S, \BR) \]
be any orthogonal map (defined over $\BR$, orthogonal with respect to the Mukai lattice) 
which satisfies $\varphi(\gamma) = \gamma$. We let
\[ \varphi : H^{\ast}( \Hilb^d S, \BR) \to H^{\ast}( \Hilb^d S, \BR) \]
be the induced map\footnote{
Concretely, for $\alpha \in H^{\ast}(S)$ and $i \geq 0$
let 
\[ \Fp_{-m}(\alpha) : H^{\ast}(\Hilb^d S) \to H^{\ast}( \Hilb^{d+m} S) \]
 be the Nakajima creation operator
that geometrically adds the cycle of $m$-fat subschemes located on the locus Poincar\'e dual to $\alpha$.
Define the modified creation operator
\[ \tilde{\Fp}_m(\alpha) =
\begin{cases}
(-m)^{-1} \Fp_m(\alpha) & \text{ if } \alpha \in H^2(S) \\
\Fp_m(\alpha) & \text{ if } \alpha \in H^2(S) \\
(-m) \Fp_m(\alpha) & \text{ if } \alpha \in H^4(S).
\end{cases}
\]
Then the induced map $\varphi$ acts by
$\phi\left( \prod_{i} \Fp_{-m_i}(\alpha_i) v_{\varnothing} \right)
=
\prod_{i} \Fp_{-m_i}( \phi(\alpha_i) ) v_{\varnothing}$,
where $v_{\varnothing}$ is the vacuum vector.
}.
We require the following conjecture.
\begin{conj} \label{Conj_invariance} For any $\mu, \nu \in H^{\ast}(\Hilb^d(S))$ we have
\[
\PT^{S \times \p^1}_{n, (\gamma,d)}( \mu, \nu )
=
\PT^{S \times \p^1}_{n, (\gamma,d)}( \varphi(\mu), \varphi(\nu) ).
\]
\end{conj}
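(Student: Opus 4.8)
The plan is to deduce the invariance from the deformation invariance of the reduced rubber theory together with the density of the K3 monodromy group inside the relevant orthogonal group, and then to treat the remaining directions by derived auto-equivalences. Fix the insertions $\mu,\nu$ and regard
\[ f(\varphi) = \PT^{S \times \p^1}_{n,(\gamma,d)}(\varphi(\mu), \varphi(\nu)) \]
as a function of $\varphi \in \mathrm{O}(H^*(S,\BR))_\gamma$, the real orthogonal group of the Mukai lattice fixing $\gamma$. Because $\varphi$ acts on $H^*(\Hilb^d S,\BR)$ linearly through the Nakajima basis and the rubber invariant is multilinear in the two insertions, $f$ is a regular (polynomial) function of the matrix entries of $\varphi$. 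Hence it suffices to prove that $f$ is constant on a Zariski dense subset of each connected component of $\mathrm{O}(H^*(S,\BR))_\gamma$.

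The first source of such transformations is geometric deformation. Consider a family $\CS \to B$ of projective K3 surfaces over a connected base $B$ along which $\gamma$ stays of Hodge type $(1,1)$, i.e. a $\gamma$-polarized family. The reduced virtual class on the rubber moduli space is built from the $2$-form pulled back from the K3 factor, which persists in such families, so the correlator $\PT^{S\times\p^1}_{n,(\gamma,d)}(\mu,\nu)$ is constant along $B$. Parallel transport around loops in $B$ produces the geometric monodromy group $\Gamma_\gamma \subset \mathrm{O}(H^2(S,\BZ))_\gamma$, which fixes $\gamma$, acts trivially on $H^0\oplus H^4$, and satisfies $f|_{\Gamma_\gamma}\equiv f(\id)$ by deformation invariance. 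By the arithmeticity of the monodromy of lattice-polarized K3 surfaces, $\Gamma_\gamma$ is a finite-index subgroup of $\mathrm{O}(\gamma^\perp \cap H^2(S,\BZ))$, and by the Borel density theorem it is Zariski dense in the real group $\mathrm{O}(\gamma^\perp \cap H^2(S,\BR))$, which is semisimple of non-compact type (signature $(2,19)$ when $\gamma^2>0$, and of the analogous indefinite signature otherwise). Picard--Lefschetz reflections in $(-2)$-classes orthogonal to $\gamma$ supply representatives in the non-identity components. This already establishes the invariance for every $\varphi$ that acts only on $H^2$ and fixes $\gamma$.

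The remaining, and genuinely hardest, directions are the orthogonal transformations that mix the summands $H^0$, $H^2$ and $H^4$ of the Mukai lattice --- for instance a hyperbolic rotation in the plane $H^0\oplus H^4$ fixing all of $H^2$. Such transformations are never realized by geometric monodromy, which fixes the fundamental class in $H^0$ and the point class in $H^4$. They are instead realized by symplectic derived auto-equivalences: tensoring by line bundles, and the spherical twist $T_{\CO_S}$, whose cohomological action reflects in the class $v(\CO_S)=(1,0,1)$ and interchanges $H^0$ and $H^4$ while fixing $H^2$. Together with the $H^2$-transformations above, these generate a Zariski dense subgroup of the full stabilizer $\mathrm{O}(H^*(S,\BR))_\gamma$, so it would suffice to show that each Fourier--Mukai transform $\Phi$ induces a compatible isomorphism of the rubber moduli spaces intertwining the evaluation maps $\ev_0,\ev_\infty$ and matching the reduced virtual classes, whence $f(\Phi_*)=f(\id)$. \emph{This is the main obstacle:} a derived-categorical invariance of the reduced rubber invariants under an arbitrary auto-equivalence of $D^b(S)$ is not presently available, since the stable-pairs moduli problem is not manifestly preserved under Fourier--Mukai, and this is precisely why the statement is stated as a conjecture. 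A natural route around it would be to pass first, via the (conjectural) PT/Hilb correspondence of \cite{K3xE, HilbK3}, to the two-point genus $0$ Gromov--Witten theory of $\Hilb^d(S)$ and to prove the orthogonal invariance at the level of the quantum multiplication operator, where the monodromy action is transparent; but even there the mixing of $H^0$ and $H^4$ would still have to be controlled by the induced action of derived equivalences.
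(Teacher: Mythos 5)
You have correctly diagnosed that this statement is a conjecture and that no complete proof is currently available; your honesty about the obstruction is the most valuable part of the attempt. The paper itself offers no proof: its entire justification is the one-line remark that the statement follows from the conjectural closed formula for the rubber invariants proposed in \cite{HilbK3}, whose right-hand side is manifestly invariant under isometries of the Mukai lattice fixing $\gamma$. That is a genuinely different route from yours. Your strategy (geometric monodromy plus Borel density for the $H^2$-directions, then symplectic derived auto-equivalences such as $T_{\CO_S}$ and tensoring by line bundles for the directions mixing $H^0$ and $H^4$) is the structural explanation for \emph{why} one believes the conjecture, and your identification of the Fourier--Mukai compatibility of the reduced rubber moduli problem as the missing ingredient is exactly right. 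The paper's route trades that geometric input for an explicit (but equally unproven) formula; yours would, if completed, be more conceptual and would not require knowing the answer in closed form.

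One correction: you should not present the purely-$H^2$ case as ``already established.'' As the paper notes, the restriction of the conjecture to isometries acting trivially on $H^0\oplus H^4$ is precisely \cite[Conj.~C1]{K3xE}, which remains open. Your monodromy argument is the standard heuristic for it, but two steps are not in the literature in the form you need: (i) deformation invariance of the \emph{reduced rubber} Pandharipande--Thomas invariants of $S\times\p^1/\{S_0,S_\infty\}$ along $\gamma$-polarized families (the reduced class here is built by cosection localization, and carrying it through a family where the moduli spaces jump requires an argument that has only been written down in related settings); and (ii) the polynomiality of $\varphi\mapsto \PT^{S\times\p^1}_{n,(\gamma,d)}(\varphi(\mu),\varphi(\nu))$ in the entries of $\varphi$, which depends on fixing the normalization of the induced action on $H^{\ast}(\Hilb^d(S),\BR)$ through the Nakajima basis --- note the paper uses the \emph{modified} creation operators $\tilde{\Fp}_m(\alpha)$ with weights depending on the degree of $\alpha$, so the induced action is not the naive one. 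Neither gap is fatal to the heuristic, but both should be flagged as assumptions rather than facts.
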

The conjecture is a consequence of the conjectural formula for the rubber invariants proposed in \cite{HilbK3}.
If $\varphi$ acts by the identity on $H^0(S)$ and $H^4(S)$, the conjecture specializes to \cite[Conj.C1]{K3xE}.
Let now
\[ \gamma \in H^{\ast}(S,\BZ) \]
be any non-zero Hodge class. Let
\[ \varphi : H^{\ast}(S,\BR) \to H^{\ast}(S, \BR) \]
be an isometry such that $\varphi(\gamma)$ is a Hodge class, lies in $H^2(S,\BZ)$ and is positive with respect to an ample class.
We define the extended rubbber invariants by
\[
\widetilde{\PT}^{S \times \p^1}_{n, (\gamma,d)}( \mu, \nu ) :=
\PT^{S \times \p^1}_{n, (\varphi(\gamma),d)}( \varphi(\mu), \varphi(\nu) ).
\]
By Conjecture~\ref{Conj_invariance} the definition is independent of the choice of $\varphi$.

We are now ready to define the invariants of the non-commutative CHL \eqref{non_geom_chl}. Let 
\[ \widetilde{\Lambda}^g = H^{\ast}(S,\BZ)^g, \quad \widetilde{\Lambda}_g = \big( \widetilde{\Lambda}^g \big)^{\perp} \subset H^{\ast}(S,\BZ) \]
be the invariant and coinvariant lattice, and let
\[ P = \frac{1}{N} \sum_{i=0}^{N-1} g_{\ast}^i \]
be the projection operator.
Let $\gamma \in P(H^{\ast}(S,\BZ))$
be a Hodge class. 
Let
\[ g_{\ast} : H^{\ast}( \Hilb^d S, \BZ) \to H^{\ast}( \Hilb^d S, \BZ). \]
be the action induced by $g_{\ast} : H^{\ast}(S,\BZ) \to H^{\ast}(S,\BZ)$.
Let
\[ \Gamma_{g_{\ast}} \in H^{\ast}( \Hilb^d S, \BZ)^{\otimes 2} \]
be its graph (where the Poincare duality is taken with respect to the Mukai pairing).
We define the Donaldson--Thomas invariant of the non-commutative CHL model \eqref{non_geom_chl} to be
\[ \DT^{\tilde{g}}_{n, (\gamma,d)}
:=
\frac{1}{N} \sum_{ \substack{ \tilde{\gamma} \in H^{\ast}(S,\BZ) \\ P(\tilde{\gamma}) = \gamma }}
\widetilde{\PT}^{S \times \p^1}_{n + d, (\tilde{\gamma},d)}( \Gamma_{g_{\ast}} ).
\]
If $g$ arises from a symplectic automorphism, then this definition specializes
by \eqref{deg_formula} to the Donaldson--Thomas invariant of the CHL model in class $(\gamma,d)$.
The relationship between this set of invariants and the Mathieu moonshine
conjecture will be pursued in future work.

\section{Modular forms} \label{section:modular_forms}
\subsection{Variables}
Let $\BH = \{ x + iy \in \BC \, | \, x,y \in \BR, y > 0\}$ be the upper half plane.
Consider variables $\tau \in \BH$ and $z \in \BC$, and let
\[ q = e^{2 \pi i \tau}, \quad p = e^{2 \pi i z}. \]
We make the following convention:
If a function $f(\tau, z)$ is invariant under $z \mapsto z+1$ and $\tau \mapsto \tau + 1$
we often write $f(q,p)$ instead of $f(\tau,z)$. Sometimes we will omit the argument $z$ or $p$.
Sometimes we will also omit $\tau$ or $q$. 
If an argument is modified it is always written out. For example, for the functions $f(\tau,z)$, $f(2 \tau, z)$ and $f(2 \tau, 2 z)$
we may write
\[ f = f(q) = f(q,p), \quad f(q^2) = f(q^2,p), \quad f(q^2, p^2) \]
respectively.

\subsection{Modular forms} \label{subsec:modular_forms}
A subgroup $\Gamma \subset \SL_2(\BZ)$ is a \emph{congruence subgroup}
if $\Gamma(N) \subset \Gamma$ for some $N \geq 1$,
where
\[ \Gamma(N) = \left\{ g \in \SL_2(\BZ)\, \middle|\, g \equiv \begin{pmatrix} 1&0\\0&1 \end{pmatrix} \text{ mod } N \right\}. \]
Let $\Mod_k(\Gamma)$ be the space of modular forms of weight $k$ for a congruence subgroup 
$\Gamma \subset \SL_2(\BZ)$ (more generally
$\Gamma \subset \GL_2^+(\BQ)$ is conjugate to a congruence subgroup) \cite{Koblitz}.
The algebra of modular forms is defined by
\[ \Mod(\Gamma) = \bigoplus_{k} \Mod_k(\Gamma). \]
If $\Gamma = \SL_2(\BZ)$ we often omit $\Gamma$ from the notation. Let also
\[ \Gamma_0(N) =
\left\{
\begin{pmatrix} a&b\\c&d \end{pmatrix} \, \middle| \, c \equiv 0 \text{ mod } N.
\right\}
\]

Define the weight $2k$ Eisenstein series
\[ E_{k}(\tau) = 1 - \frac{2k}{B_{k}} \sum_{m \geq 1} \sum_{d |m} d^{k-1} q^m, \quad k=2,4,6,\ldots \]
where $B_k$ are the Bernoulli numbers. If $k \geq 4$ the $E_k$ are modular forms for $\SL_2(\BZ)$.
For all $N \geq 2$ the functions defined by
\[
\CE_N(\tau) = \frac{1}{N-1} \left[NE_2(N\tau) - E_2(\tau)\right] = 1+O(q)
\]
and 
\[
\widetilde{\CE}_N(\tau) = \frac{N-1}{N} \CE _N
\]
are modular forms of weight $2$ for $\Gamma_0(N)$. For example,
\[ \CE_2(\tau) = \vartheta_{D4}(\tau) = 1 + 24 q + \ldots \]
is the theta function of the $D_4$ lattice. We have
\[ \Mod(\SL_2(\BZ)) = \BC[E_4, E_6], \quad \Mod(\Gamma_0(2)) = \BC[\vartheta_{D_4}, E_4]. \]
%

\subsection{Jacobi forms}
Let $\Gamma \subset \mathrm{SL}_2(\BZ)$ be a congruence subgroup (or congujate to one), and let
$\Jac_{k,m}(\Gamma)$ be the space of \emph{weak} Jacobi forms of weight $k$ and index $m \geq 0$
for the Jacobi group $\Gamma \rtimes \BZ^2$, see \cite{EZ}.
The $\Mod(\Gamma)$-algebra of weak Jacobi forms is
\[ \Jac(\Gamma) = \bigoplus_{k} \bigoplus_{m} \Jac_{k,m}(\Gamma). \]
The subspace of weak Jacobi forms of even weight is
\[ \Jac_{\text{even}}(\Gamma) = \bigoplus_{k \text{ even}} \bigoplus_{m} \Jac_{k,m}(\Gamma). \]

Define functions $K(\tau,z), \Theta(\tau,z), \wp(\tau,z)$ by
\begin{align*}
K(\tau,z) & = i \Theta(\tau, z) = (p^{1/2} - p^{-1/2}) \prod_{m \geq 1} \frac{ (1-pq^m) (1-p^{-1}q^m)}{ (1-q^m)^2 } \\
\wp(\tau,z) & = \frac{1}{12} + \frac{p}{(1-p)^2} + \sum_{d \geq 1} \sum_{k | d} k (p^k - 2 + p^{-k}) q^{d}. 
\end{align*}
With respect to the standard Jacobi theta functions $\theta_i(\tau,z)$ we have
\[ \Theta(\tau,z) = \frac{i \theta_1(\tau,z)}{\eta^3(\tau)}. \]
Define the weak Jacobi forms
\begin{align*}
\phi_{-2,1}(\tau,z) & = - K^2 = (-p^{-1} + 2 - p) + O(q) \\
\phi_{0,1}(\tau,z) & = 12 K^2 \wp = (p^{-1} + 10 + p) + O(q).
\end{align*}
In particular, the elliptic genus of a K3 surface is $2 \phi_{0,1}$. 
The algebra of weak Jacobi forms of even weight for group $\Gamma$ satisfies
\[ \Jac_{\text{even}}(\Gamma) \cong \BC[ \phi_{-2,1} , \phi_{0,1}] \otimes \Mod_{\ast}(\Gamma). \]
%

Every Jacobi form $F \in \Jac_{k,m}(\Gamma(N))$ has a Fourier expansion
\[
F(\tau, z) =
\sum_{b \in \{0,1, \ldots, 2m-1 \}} \sum_{\substack{n \in \BZ/N \\ j \in 2m \BZ+b}} c_b^{(r,s)}(4mn-j^2) q^n p^j.
\]

\subsection{Siegel modular forms}
Let $\BH_2$ be the Siegel upper half space. The
standard coordinates are
\[ Z =
\begin{pmatrix} \tau & z \\ z & \sigma \end{pmatrix} \in \BH_2\, ,
\]
where
$\tau, \sigma \in \BH$,
$z \in \BC$, and $\text{Im}(z)^2 < \text{Im}(\tau) \text{Im}(\sigma)$. Let
\[ q = e^{2 \pi i \tau}, \quad p = e^{2 \pi i z}, \quad t = e^{2 \pi i \sigma}. \]
The group $\Sp_4(\BR)$ acts on the Siegel space $\BH_2$ by
\[ gZ := (AZ+B) (CZ + D)^{-1}, \quad g = \begin{pmatrix}A&B \\C&D \end{pmatrix}. \] 
%
A Siegel modular form of weight $k$ for congruence subgroup $\Gamma \subset \Sp_4(\BZ)$
is a holomorphic function $f : \BH_2 \to \BC$ such that
\[ f(g Z) = \det(C Z + D)^k f(Z) \]
for all $g = \binom{A\ B}{C\ D} \in \Gamma$.
We let $\Mod_k^{(2)}(\Gamma)$ be the space of Siegel modular forms of weight $k$ for $\Gamma$.
The $\BC$-algebra of Siegel modular forms for $\Gamma$ is denoted by
\[ \Mod^{(2)}(\Gamma) = \bigoplus_{k} \Mod_k^{(2)}(\Gamma). \]

We will work with the several congruence subgroups in this paper.
For any $N \geq 1$ consider
\[ \Gamma_0^{(2)}(N) =
\left\{
\begin{pmatrix} A&B\\C&D \end{pmatrix} \, \middle| \, C \equiv 0 \text{ mod } N
\right\}.
\]
For any prime $p \geq 1$ define the paramodular subgroup \cite{IO, GH} (or rather a conjugate thereof)
\[ K(p) = \Sp(2,\BQ) \cap 
\begin{pmatrix}
\BZ & \BZ & p^{-1} \BZ& \BZ \\ 
p \BZ & \BZ & \BZ& \BZ \\ 
p \BZ & p \BZ & \BZ& p \BZ \\ 
p \BZ & \BZ & \BZ& \BZ
\end{pmatrix}.
\]
as well as the Iwahori subgroup
\[ B(p) = K(p) \cap \Gamma_0^{(2)}(p)
=
 \Sp(2,\BZ) \cap 
\begin{pmatrix}
\BZ & \BZ & \BZ& \BZ \\ 
p \BZ & \BZ & \BZ& \BZ \\ 
p \BZ & p \BZ & \BZ& p \BZ \\ 
p \BZ & p \BZ & \BZ& \BZ
\end{pmatrix}.
\]

\subsection{Examples of Siegel modular forms} \label{subsec:Examples_siegel_forms}
We discuss examples of Siegel modular forms for several congruence subgroups.

\subsubsection{The full group $\Sp_4(\BZ)$}
Consider the Fourier expansion of the elliptic genus of K3,
\[ 2 \phi_{0,1}(\tau,z) = \sum_{n \geq 0} \sum_{k \in \BZ} c(4n - k^2) p^k q^n. \]
The Igusa cusp form is a weight $10$ Siegel modular form for $\Sp_4(\BZ)$
which by a result of Gritsenko and Nikulin \cite{GN} can be defined by
\begin{equation} \chi_{10}(\Omega) = p q \tilde{q} \prod_{\substack{k \in \BZ \\ h,d \geq 0 \\ k<0 \text{ if } h=d=0}} ( 1 - p^k q^h \tilde{q}^d )^{c(4 h d - k^2)}. \label{Igusa} \end{equation}
Alternatively, $\chi_{10}$ is the additive lift of the Jacobi form
\[ -\phi_{10,1}(\tau,z) = -\phi_{-2,1} \Delta = \sum_{n \geq 0} \sum_{r \in \BZ} \mathsf{a}(n,r) p^k q^n \]
as in \cite[Sec.6]{EZ}, i.e.
\[ \chi_{10}(Z) = \sum_{(m,n,r)\neq 0} q^m t^n p^r \sum_{a|(m,n,r)} a^{9} \mathsf{a}\left( \frac{mn}{a^2}, \frac{r}{a} \right). \]
%

The second example we consider is the weight $4$ Eisenstein series 
\[ E_4^{(2)} = 1 + O(q,t,p). \]
We give two descriptions.
The first is as additive lift of the Jacobi form
\[ E_{4,1}(\tau,z) = K(\tau,z)^2 \left( E_4(\tau) \wp(\tau,z) - \frac{1}{12} E_6(\tau) \right) = \sum_{n,r} \mathsf{b}(n,r) q^n p^r. \]
We have
\[
E_4^{(2)}(Z)
=
1 + 
240 \sum_{(m,n,r) \neq 0} q^m t^n p^r
\sum_{a | (m,n,r)} a^{3} \mathsf{b}\left( \frac{mn}{a^2}, \frac{r}{a} \right).
\]
The second description is as the Siegel theta series of the $E_8$ lattice. 
Let $r_{E_8}(T)$ is the number of embeddings of $T = \binom{2m\ \phantom{r}r}{r\phantom{r}\ 2n}$ into $E_8$. Precisely,
\[ r_{E_8}(T) = \left| \left\{ (x,y) \in E_8^2 \,\middle|\, \langle x, x \rangle = 2m, \ \langle x,y \rangle = r, \ \langle y,y \rangle = 2n \right\} \right|. \]
Then by \cite[Sec.7]{EZ} we have
\[
E_4^{(2)}(Z) = \sum_{T = \binom{m\phantom{/2}\ r/2}{r/2\ \phantom{/2}n}} r_{E_8}(T) e( \mathrm{tr}\ TZ) \]

The forms $E_4^{(2)}$ and $\chi_{10}$ are generators of the ring of Siegel modular forms for $\Sp_4(\BZ)$.
By a result of Igusa \cite[v.d.G.,Thm.6]{123} we have
\[ \Mod^{(2)}_{\text{even}} = \BC[ E_4^{(2)}, E_6^{(2)}, \chi_{10}, \chi_{12} ] \]
where $E_6^{(2)}$ is an Eisenstein series and $\chi_{12}$ is a cusp form. 

\subsubsection{The group $\Gamma_0^{(2)}(2)$} \label{subsubsec:Gamma2}
Let $E_4^{(2)}(Z)$ be the Eisenstein series defined above.
The first Siegel modular form for $\Gamma_0^{(2)}(2)$ we consider is
\[ E_4^{(2)}(2Z). \]
[To see this is modular with respect to $\Gamma_0^{(2)}(2)$ we may argue as follows.
If $f(Z)$ is a modular form of weight $k$ for group $\Gamma$, then $f|_k g$ is a modular form
for group $g^{-1} \Gamma g$. Here 
\[ (f|_k g)(Z) = \det(C Z+D)^{-k} f(gZ) \]
with $g = (A,B;C,D)$ is the slash operator.
Apply this fact to $E_4^{(2)}(Z)$ and $g_0 = (2I_2, 0; 0, I_2)$ and use
\[ g_0^{-1} \Sp_4(\BZ) g_0 \cap \Sp_4(\BZ) = \Gamma_0^{(2)}(2).] \]

The second form for $\Gamma_0^{(2)}(2)$ is an additive lift.
Let $\psi \in \Jac_{k,1}(\Gamma_0(2))$ be Jacobi form of weight $k$ index $1$ and consider the Fourier expansion
\[ \psi = \sum_{n,r} \mathsf{c}(n,r) q^n p^r. \]
We define the $m$-th Hecke lift of $\psi$ as in \cite[App.A]{JS} by
\[
\psi|_k V_{m} = \sum_{n,r} q^n p^r \sum_{\substack{ a | (m,n,r) \\ a \text{ odd}}} a^{k-1} \mathsf{c}\left( \frac{mn}{a^2}, \frac{r}{a} \right).
\]
The function $\psi|_k V_{m}$ is a Jacobi form of weight $k$ and index $m$ for $\Gamma_0(2)$. 
The series
\[ \Psi = \sum_{m = 0}^{\infty} t^m (\psi|_k V_{m})(\tau,z) \]
defines a Siegel modular form for $\Gamma_0^{(2)}(2)$ (as in \cite[Sec.4]{EZ} we need to pick an appropriate re-normalization for the constant term of $\Psi$ here).

We apply this lifting construction to the weight $4$ Jacobi form
\[ G_{4,1}(\tau,z)
=
K(\tau,z)^2 \left( \wp(\tau) E_4(2 \tau) - \frac{1}{8} \theta_{D4}^3(\tau) + \frac{1}{24} E_4(\tau) \theta_{D4}(\tau) \right).
\]
Define its Fourier coefficients:
\[ G_{4,1}(\tau,z)
=
\sum_{n,r} \mathsf{c}_G(n,r) q^n p^r. \]
Then let $G_4(Z)$ be the Siegel modular form for $\Gamma^{(2)}_0(2)$ defined as the additive lift of $G_{4,1}$,
\[
G_4(Z) =
-\frac{7}{240} +
\sum_{0 \neq (m,n,r)}
q^m t^n p^r
\sum_{\substack{a | (m,n,r)\\ a \text{ odd}}}
a^3 \mathsf{c}_G \left( \frac{mn}{a^2}, \frac{r}{a} \right).
\]

We give a description of the algebra of modular forms for $\Gamma_0^{(2)}(2)$ and express
the function $E_4(2Z)$ and $G_4(Z)$ in terms of the standard generators.
For $m', m'' \in \BZ^2$ (considered as column vectors) and $m=\binom{m'}{m''}^T$ consider the genus 2 theta functions
\[
\theta_m(Z)
=
\sum_{x \in \BZ^2} e\left( \frac{1}{2} \left(x + \frac{1}{2} m'\right)^t Z \left(x + \frac{1}{2} m'\right)
+ \left(x + \frac{1}{2} m'\right)^t \frac{m''}{2} \right).
\]
where $e(z) = \exp(2 \pi i z)$ for all $z \in \BC$. Following \cite{AI} define
\begin{align*}
X & = \left( \theta_{0000}^4 + \theta_{0001}^4 + \theta_{0010}^4 + \theta_{0011}^4 \right)/4 \\
Y & = \left( \theta_{0000} \theta_{0001} \theta_{0010} \theta_{0011} \right)^2 \\
Z & = (\theta_{0100}^4 - \theta_{0110}^4)^2/16384 \\
W & = (\theta_{0100} \theta_{0110} \theta_{1000} \theta_{1001} \theta_{1100} \theta_{1111})^2 / 4096
\end{align*}
The functions $X,Y,Z,W$ are Siegel modular forms for $\Gamma_0^{(2)}(2)$ of weight $2,4,4,6$ respectively. Moreover,
\[ \Mod_{\text{even}}(\Gamma_0^{(2)}(2)) = \BC[ X,Y,Z,W]. \]
In these generators we have explicitly
\begin{align*}
\chi_{10}(Z) & = Y W \\
E_4(Z) & = 4 X^2 - 3Y + 12288 Z \\
E_4(2Z) & = \frac{1}{4} X^2 + \frac{3}{4} Y - 192 Z \\
G_4(Z) & = \frac{1}{120} X^2 - \frac{3}{80} Y - \frac{12}{5} Z.
\end{align*}

\subsubsection{The paramodular group $K(2)$} \label{subsubsec:paramodular}
Let $y \in E_8$ be a vector of length $\langle y, y \rangle = 4$ where we let $\langle -,- \rangle$ denote the pairing on $E_8$.
Consider the theta function
\[ \Theta_{E_8, y}
=
\sum_{x \in E_8} q^{\frac{1}{2} \langle x,x \rangle} p^{\langle x, y \rangle}.
\]
By \cite[Thm.7.1]{EZ} and since $E_8$ is unimodular the function $\Theta_{E_8, y}$
is a Jacobi form for $\SL_2(\BZ) \otimes \BZ^2$
of weight $4$ and index $2$.  
Concretely,
\[
\Theta_{E_8,y}
=
K^4 \left( \wp^2 E_4 - \frac{1}{6} \wp E_6 + \frac{1}{144} E_4^2 \right).
\]
As explained in \cite[Proof of Thm.2.1]{GH} the paramodular lift of $\Theta_{E_8,y}$ is
\[
F_4(Z)
=
\frac{1}{240} 
+
\sum_{\substack{0 \neq (m,n,r) \in \BZ_{\geq 0}^3 \\ m \text{ even}}} q^m t^n p^r
\sum_{a | \left( m/2, n, r \right)}
a^3 \Theta_{E_8,y}\left[ \frac{m/2 \cdot n}{a^2}, \frac{r}{a} \right].
\]
By \cite{IO} $F_4(Z)$ is the unique modular form for $K(2)$ of weight $4$,
\[ \Mod_4( K(2) ) = \BC F_4(Z). \]

We have the following alternative description. As in \cite{IO} let
\[ T = (\theta_{0100} \theta_{0110})^4/256. \]
Then we have
\[ F_4 = \frac{1}{960} ( X^2 + 3 Y + 3072 Z + 960 T ). \]

Because the matrix
\[
\begin{pmatrix}
0&1&0&0\\
1&0&0&0\\
0&0&0&1\\
0&0&1&0
\end{pmatrix}
\]
does not lie in $K(2)$, the function $F_4(Z) = F_4(q,t,p)$ does not have to be symmetric in $q$ and $t$ and in fact it is not.
For example,
the Fourier-Jacobi coefficients of $F_4$ in each direction have the form
\begin{align*}
\left[ F_4 \right]_{t^m}
& = K(\tau,z)^{2m} P_{m}\left(\wp(\tau,z), \wp(2 \tau,z), \vartheta_{D4}(\tau), E_4(\tau) \right) \\
\left[ F_4 \right]_{q^m}
& = K(\sigma,z)^{2m} Q_{m} \left(\wp(\sigma,z), \vartheta_{D4}(\sigma,z), E_4(\sigma) \right)
\end{align*}
where $P_m$ and $Q_m$ are polynomials of weight $2m+4$ (and the degree of $P_m$ in $\wp(2 \tau)$ is non-zero in general).

\section{Vertex computations}\label{sec: vertex}
\newcommand{\Ztfirstcoeff}{\left[  \mathsf{Z}^X(q,t,p) \right]_{t^{-1/N}}}
\newcommand{\Ztsecondcoeff}{\left[  \mathsf{Z}^X(q,t,p) \right]_{t^{0}}}

In this section we use the topological vertex method to compute  the first
two terms in the $t$ expansion of $\mathsf{Z}^X(q,t,p) $.
This proves the first part of Theorem~\ref{thm:intro1}, and Theorem~\ref{thm:intro2}.

\subsection{Preliminaries.
}

For any $\BC$-scheme $S$ of finite type, let $e(S)$ denote the
topological Euler characteristic of $S$, taken with the analytic
topology. More generally, if $\mu :S\to R$ is a constructible function
valued in a ring $R$, let
\[
e(S,\mu ) = \sum_{r\in R} r\cdot e(\mu^{-1}(r))
\]
be the $\mu$-weighted Euler characteristic.

We use the following standard facts:

\begin{itemize}
\item The Euler characteristic defines a ring homomorphism
\[ e:K_{0}(\operatorname{Var}_{\BC})\to \BZ, \]
i.e. it is additive under the
decomposition of a scheme into an open set and its complement, and it
is multiplicative on Cartesian products.
\item For any constructible morphism\footnote{A constructible morphism
is a map which is regular on each piece of a decomposition of its
domain into locally closed subsets.} $f:Y\to Z$ we have (see
\cite{MacPherson-Annals74})
\begin{equation}\label{eqn: e(Y,mu)=e(Z,f_*(mu))}
e(Y,\mu ) = e(Z,f_{*}\mu )
\end{equation}
where $f_{*}\mu $ is the constructible function given by 
\[
(f_{*}\mu )(x) = e(f^{-1}(x),\mu ).
\]
\item Let $g:\BZ_{\geq 0}\to \BZ$ be any function with $a(0)=1$. Let
\[
G_{d}:\Sym^{d}(Z)\to \BZ
\]
be the constructible function defined by
\[
G\left(\sum_{i}k_{i}z_{i} \right) = \prod_{i}g(k_{i}).
\]
Then (see
\cite[Lemma~32]{Bryan-Kool}) 
\begin{equation}\label{eqn: formula for euler char of multiplicative
constructible functions on symmetric products}
\sum_{d=0}e(\Sym^{d}(Z),G_{d})q^{d} = \left(\sum_{k=0}^{\infty}g(k)q^{k} \right)^{e(Z)}.
\end{equation}
\item If $\BC^{*}$ acts on a scheme $Y$ with fixed point locus
$Y^{\BC^{*}}\subset Y$, then (see \cite{Bialynicki-Birula})
\[
e(Y) = e(Y^{\BC^{*}}). 
\]
\item Let $G$ be an algebraic group acting on a scheme $Y$. Let $\mu$
be a $G$-invariant constructible function on $Y$. Suppose that each
$G$-orbit has zero Euler characteristic.
Then $e(Y,\mu )=0$. \cite{Banana}
\end{itemize}

For any $\BC$-scheme $S$, let $\nu_{S}:S\to \BZ$ denote the Behrend
function. We define the \emph{virtual Euler characterisitic} 
\[
e_{\vir}(S) = e(S,\nu_{S})
\]
to be the Behrend function weighted Euler characteristic.

The Behrend function depends on a scheme formally locally and
consequently, the virtual Euler characteristic is motivic in the
following sense. Let $Z\subset S$ be a closed subscheme, let
$U=S\setminus Z$, and let $\widehat{Z}$ be the formal neighborhood of $Z$
in $S$. Then
\begin{equation}\label{eqn: evir is motivic with formal nghds}
e_{\vir} (S) = e_{\vir}(U) + e_{\vir}(\widehat{Z}).
\end{equation}

We adopt the convention that replacing an index with a bullet denotes
a sum over the index multiplied by the appropriate variable raised to
the index. For example
\[
\Hilb^{\sigma +\bullet E' ,\bullet}(X ) = \sum_{d,n} \Hilb^{\sigma
+dE' ,n}(X )\, q^{d}\, y^{n}
\]
where we regard the right hand side as a formal power series in $q$,
Laurent in $y$, and whose coefficients are schemes.

With these conventions in hand, we may write
\begin{align*}
\Ztfirstcoeff &= \sum_{d,n} e_{\vir} \left(\Hilb^{\sigma +dE' ,n}(X ) /
E  \right)\, q^{d-1}  \, y^{n}\\
&= q^{-1} e_{\vir}\left(\Hilb^{\sigma +\bullet E' ,\bullet} (X )/E  \right)
\end{align*}
and
\begin{align*}
\Ztsecondcoeff &= \sum_{d,n} e_{\vir} \left(\Hilb^{\sigma +\frac{1}{N}F +dE' ,n}(X  \right) /
E )\, q^{d-1}  \, y^{n}\\
&= q^{-1} e_{\vir}\left(\Hilb^{\sigma +\frac{1}{N}F +\bullet E' ,\bullet} (X )/E  \right)
\end{align*}

\subsection{Decomposing Hilbert schemes via cycle support to compute $\Ztfirstcoeff$}

Subschemes of $X $ correspond to $\ZmodN{N}$ invariant subschemes
of $S\times E$, in particular we have
\[
\Hilb^{\sigma +dE' ,n}(X )\cong \Hilb^{\sigma_{0}+\dotsb
+\sigma_{N-1}+dE ,Nn}(S\times E)^{\ZmodN{N}} .
\]

The 1-cycle corresponding to any subscheme of $S\times E$ in the class $\sigma_{0}+\dotsb
+\sigma_{N-1}+dE $ must be of the form
\[
\sum_{i=0}^{N-1} \sigma_{i}\times \{x_{i} \} \, + \, \{y_{1}+\dotsb +y_{d}
\} \times E
\]
where $x_{i}\in E$ and $y_{1}+\dotsb +y_{d}$ is a length $d$ 0-cycle
on $S$. Consequently, such subschemes are uniquely determined by their
restrictions to the subschemes
\[
U\times E, \quad \widehat{\sigma}_{1}\times E,\quad \dotsc \quad
,\widehat{\sigma}_{N-1}\times E
\]
where $\widehat{\sigma}_{i}$ is the formal neighborhood of $\sigma_{i}$ in
$S$ and $U$ is the complement of the union of the sections. 

This leads to the following decomposition:
\[
\Hilb^{\sigma_{0}+\dotsb +\sigma_{N-1}+\bullet E ,N\bullet}(S\times
E)\, \cong \,\Hilb^{\bullet E ,N\bullet}(U\times E) \prod_{i=0}^{N-1}
\Hilb^{\sigma_{i}+\bullet E ,\bullet}(\widehat {\sigma}_{i} \times E)
\]
which should be understood as giving constructible
isomorphisms\footnote{A constructable isomorphism is a bijective map
which is regular on some decomposition of the domain into locally
closed subsets.} among the coefficients and consequently equality of
the coefficients in the Grothendieck group of varieties. 

Since the $\ZmodN{N}$ action on the Hilbert scheme permutes the last
$N$ factors in the above decomposition, we get an isomorphism
\[
\Hilb ^{\sigma_{0}+\dotsb +\sigma_{N-1}+\bullet E ,N\bullet}(S\times
E)^{\ZmodN{N}}\, \cong\,\Hilb^{\bullet E ,N\bullet}(U\times E)^{\ZmodN{N}}
\times \Hilb^{\sigma_{0}+\bullet E ,\bullet}(\widehat {\sigma}_{0} \times E).
\]
Moreover, we may fix a slice for the $E$ action on the Hilbert scheme
by defining 
\[
\Hilb_{\fix}^{\sigma_{0}+\bullet
E ,\bullet}(\widehat{\sigma}_{0}\times E) \subset
\Hilb^{\sigma_{0}+\bullet E ,\bullet}(\widehat{\sigma}_{0}\times E)
\]
to be the locus of subschemes containing $\sigma_{0}\times \{x_{0} \}$
as a component, where $x_{0}\in E$ is the origin. Combining this with
the previous discussion we arrive at
\[
\Hilb^{\sigma +\bullet E' ,\bullet} (X )/E \cong \Hilb^{\bullet
E ,N\bullet}(U\times E)^{\ZmodN{N}} \times \Hilb_{\fix}^{\sigma_{0}
+\bullet E ,\bullet} (\widehat{\sigma}_{0}\times E) 
\]
which again is understood as giving constructible isomorphisms of the
coefficients. 

To compute the DT partition function $\Ztfirstcoeff$, we need to
take the virtual Euler characteristic of $\Hilb^{\sigma +\bullet
E' ,\bullet} (X )/E$. Recall that for the virtual Euler
characteristic to respect the above decomposition, we must retain the
formal neighborhood of each stratum in the decomposition (see
equation~\eqref{eqn: evir is motivic with formal nghds}). The result
is
\[
q\Ztfirstcoeff  = - e_{\vir}(\Hilb^{\bullet E ,N\bullet} (U\times
E)^{\ZmodN{N}} )\cdot
e_{\vir}\left(\widehat{\Hilb}_{\fix}^{\sigma_{0}+\bullet
E ,\bullet}(\widehat{\sigma}_{0}\times E) \right) 
\]
where $\widehat{\Hilb}_{\fix}(\widehat{\sigma}_{0}\times E)$ is the
formal neighborhood of $\Hilb_{\fix}(\widehat{\sigma}_{0}\times E)$ in
$\Hilb (S\times E )$. The overal minus sign arises as the difference
between the Behrend function of $\Hilb (S\times E)$ and  $\Hilb
(S\times E)/E$.  

Only the fixed points of the $E$ action on $\Hilb (U\times
E)^{\ZmodN{N}}$ contribute to the virtual Euler characteristic. These
fixed points correspond to $E$-invariant subschemes. Such subschemes
cannot have zero dimensional components and are determined by their
intersection with $U\times \{x_{0} \}$. Thus
\[
e_{\vir}\left(\Hilb^{\bullet E ,N\bullet}(U\times E)^{\ZmodN{N}}
\right) = e_{\vir}\left(\Hilb^{\bullet}(U)^{\ZmodN{N}} \right). 
\]
The Hilbert scheme of points on $S$ (and hence on $U$) is a smooth
holomorphic symplectic variety and consequently, so are the fixed
points of the $\ZmodN{N}$ action. The Behrend function on smooth even
dimensional varieties is 1 and so the virtual Euler characteristic and
the usual Euler characteristic coincide. Moreover, the $\ZmodN{N}$
fixed locus can be identified with the Hilbert scheme of substacks of
the stack quotient $[U/\ZmodN{N}]$. Thus
\[
e_{\vir}\left(\Hilb^{\bullet}(U)^{\ZmodN{N}} \right) =
e\left(\Hilb^{\bullet}(U)^{\ZmodN{N}} \right) =
e\left(\Hilb^{\bullet}([U/\ZmodN{N}]) \right).
\]

The usual motivic methods for computing the generating function of
Hilbert schemes of points on smooth orbifold surfaces work also for
orbifolds given by quotients by cyclic groups.
\begin{lemma}\label{lem: formula for e(Hilb(Y/G))}
Let $Y$ be a smooth surface with a $\ZmodN{N}$ action. For $d|N$ let
$F_{d}\subset Y$ be the locus of points whose $\ZmodN{N}$ stabilizer
has order $d$, and let $e_{d} = e(F_{d}/\ZmodN{N/d})$. Then
\[
\sum_{n=0}^{\infty} e(\Hilb^{n}([Y/\ZmodN{N}])\, q^{n} = \prod_{d|N}
\prod_{k=1}^{\infty} (1-q^{\frac{N}{d}k})^{-e_{d}}. 
\]
\end{lemma}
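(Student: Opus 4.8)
The plan is to identify the stacky Hilbert scheme with a fixed locus and then run the standard motivic factorization of Hilbert-scheme Euler characteristics over the support cycle, applying the multiplicative identity \eqref{eqn: formula for euler char of multiplicative constructible functions on symmetric products} one stabilizer stratum at a time. A length $n$ substack of $[Y/\ZmodN{N}]$ is the same datum as a $\ZmodN{N}$-invariant subscheme of $Y$, and we grade by the length $n$ of this subscheme in $Y$, so that
\[
\Hilb^n([Y/\ZmodN{N}]) \cong \Hilb^n(Y)^{\ZmodN{N}}.
\]
First I would push forward along the $\ZmodN{N}$-equivariant Hilbert--Chow morphism to the invariant part of $\Sym^n(Y)$. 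The support of an invariant subscheme is a union of $\ZmodN{N}$-orbits, and orbits meeting distinct strata $F_d$ are disjoint; hence every invariant subscheme splits canonically, with disjoint support, into pieces supported on the individual strata $F_d$ ($d \mid N$). Since the Euler characteristic is multiplicative on products and additive over the disjoint strata, the generating function factors as $\prod_{d \mid N} \zeta_d(q)$, where $\zeta_d(q)$ is the contribution of invariant subschemes supported on $F_d$, and it remains to compute each $\zeta_d$.

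Next I would carry out the local analysis on a single stratum. Fix a point $y \in F_d$; its stabilizer is the order $d$ subgroup $\ZmodN{d} \subset \ZmodN{N}$, and its orbit has $N/d$ points. Linearizing the cyclic action on the formal neighbourhood of $y$ realizes $\ZmodN{d}$ as a subgroup of the diagonal torus $T = (\BC^{\ast})^2$ acting on $\hat{\BC}^2$. An invariant subscheme supported on the orbit of $y$ is rigidly determined by its $\ZmodN{d}$-invariant restriction at $y$ together with the $\ZmodN{N}$-translates of that restriction; if the restriction at $y$ has length $\ell$, then the invariant subscheme upstairs has length $(N/d)\ell$. The relevant punctual count is
\[
e\left( \Hilb^{\ell}_0(\hat{\BC}^2)^{\ZmodN{d}} \right) = e\left( \Hilb^{\ell}_0(\hat{\BC}^2)^{T} \right) = p(\ell),
\]
the number of partitions of $\ell$: the first equality is \cite{Bialynicki-Birula} applied to the residual $T$-action on the $\ZmodN{d}$-fixed locus, and the second holds because every $T$-fixed ideal is monomial, hence automatically $\ZmodN{d}$-fixed as $\ZmodN{d} \subset T$. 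Thus a single orbit of $F_d$, graded by length upstairs, contributes $\sum_{\ell \geq 0} p(\ell)\, q^{(N/d)\ell} = \prod_{k \geq 1} (1 - q^{(N/d)k})^{-1}$.

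Finally I would globalize. The orbits inside $F_d$ are exactly the points of $\bar{F}_d := F_d / \ZmodN{N/d}$, a variety with $e(\bar{F}_d) = e_d$ since $\ZmodN{N/d}$ acts freely on $F_d$. Pushing the fixed Hilbert scheme forward along Hilbert--Chow using \eqref{eqn: e(Y,mu)=e(Z,f_*(mu))}, the contribution $\zeta_d$ is computed by the symmetric-product formula \eqref{eqn: formula for euler char of multiplicative constructible functions on symmetric products} with base $\bar{F}_d$, local weights $g(\ell) = p(\ell)$, and variable $q^{N/d}$ tracking the upstairs length:
\[
\zeta_d(q) = \left( \prod_{k \geq 1} (1 - q^{(N/d)k})^{-1} \right)^{e_d} = \prod_{k \geq 1} (1 - q^{(N/d)k})^{-e_d}.
\]
Taking the product over $d \mid N$ gives the asserted formula. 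The main obstacle is the bookkeeping that turns the geometric picture into a rigorous identity of weighted Euler characteristics: one has to check that, after stratifying by the combinatorial type of the support relative to the $F_d$, the fixed Hilbert scheme is constructibly a bundle of products of punctual pieces over the symmetric products of the $\bar{F}_d$, so that \eqref{eqn: e(Y,mu)=e(Z,f_*(mu))} and \eqref{eqn: formula for euler char of multiplicative constructible functions on symmetric products} apply; the local identity $e(\Hilb^{\ell}_0(\hat{\BC}^2)^{\ZmodN{d}}) = p(\ell)$ is the key input, and it is precisely what makes the stabilizer order $d$ enter only through the dilation $q \mapsto q^{N/d}$.
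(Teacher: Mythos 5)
Your proof is correct and is exactly the argument the paper has in mind: the paper's own proof is a two-sentence sketch ("the standard method... applies; the punctual Hilbert scheme at a point of the quotient stack is expressed in terms of punctual Hilbert schemes of the \'etale cover"), and you have simply written out that same stratification-by-stabilizer argument in full, with the correct local input $e(\Hilb^{\ell}_0(\hat{\BC}^2)^{\ZmodN{d}})=p(\ell)$ via linearization and torus fixed points, and the correct grading by length upstairs. No gaps.
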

\begin{proof}
The standard method of computing the Euler characteristic of the Hilbert scheme of points in terms of the punctual Hilbert scheme applies in this setting.
The punctual Hilbert scheme at a point in the quotient stack is easily expressed in terms of punctual Hilbert schemes of the etal\'e cover. 
\end{proof}

For  $S$ a $K3$ surface with a symplectic $\ZmodN{N}$ action, the
numbers $e_{d}$ are given in the following table:

\smallskip

\begin{center}
\begin{tabular}{|c|c|}
\hline 
$N$& $e_{d}$\\ \hline 
1& $e_{1}=24$\\ \hline 
2& $e_{1}=8$, $e_{2}=8$ \\ \hline 
3& $e_{1}=6$, $e_{3}=6$ \\ \hline 
4& $e_{1}=4$, $e_{2}=2$, $e_{4}=4$, \\ \hline 
5& $e_{1}=4$, $e_{5}=4$ \\ \hline 
6& $e_{1}=e_{2}=e_{3}=e_{6}=2$ \\ \hline 
7& $e_{1}=3$, $e_{7}=3$ \\ \hline 
8& $e_{1}=e_{8}=2$, $e_{2}=e_{4}=1$ \\ \hline 
\end{tabular}
\end{center}

\smallskip

By inspection, we see that 
\[
\sum_{n=0}^{\infty}e(\Hilb^{n}([S/\ZmodN{N}]) q^{n}  = q \Delta_{N}^{-1}
\]
where $\Delta_{N}$ is the modular form given in table~\ref{Table_Delta_N}. Since
\[
[S/\ZmodN{N}] = [U/\ZmodN{N}] \cup \BP^{1}
\]
we see that the values of $e_{d}$ for $U$ are the same as those for
$S$ except $e_{1}$ is smaller by 2. Thus we find that
\[
e(\Hilb^{\bullet}([U/\ZmodN{N}])) = q\Delta_{N}(q)^{-1}
\prod_{k=1}^{\infty } (1-q^{Nk})^{2}
\]
and so
\[
\Ztfirstcoeff= -\Delta_{N} (q)^{-1}
\prod_{k=1}^{\infty } (1-q^{Nk})^{2} \cdot 
e_{\vir}\left(\widehat{\Hilb}_{\fix}^{\sigma_{0}+\bullet
E ,\bullet}(\widehat{\sigma}_{0}\times E) \right) 
\]
The last factor in the above product is independent of $N$
and hence can be evaluated by specializing to $N = 1$
and using that we know the left hand side by the proof of the
the Igusa conjecture
\cite{ObPix2}.\footnote{
The extra factor of $N$ results
in the substitution $q\mapsto q^{N}$.
The last factor was also computed earlier
(modulo a conjecture on the Behrend function) in \cite[eqns~(4),(5),Lemma~2]{Bryan-K3xE}.}
The result is
\begin{align*}
e_{\vir}\left(\widehat{\Hilb}_{\fix}^{\sigma_{0}+\bullet
E ,\bullet}(\widehat{\sigma}_{0}\times E) \right)  =&
\frac{-p}{(1-p)^{2}}\prod_{k=1}^{\infty}
\frac{(1-q^{Nk})^{2}}{(1-pq^{Nk})^{2}(1-p^{-1}q^{Nk})^{2}} \\
=&\frac{-1}{\Theta(q^{N},p)^{2}\prod_{k=1}^{\infty}(1-q^{Nk})^{2}} 
\end{align*}
and consequently we have completed the proof that
\[
\Ztfirstcoeff = \frac{1}{\Theta(q^{N},p)^{2}\Delta_{N}(q)}.
\]

\smallskip

\subsection{Decomposing Hilbert schemes via cycle support to compute $\Ztsecondcoeff$.} \label{sec:z0}

$\quad$
\smallskip
\smallskip

We first observe that if an automorphism\footnote{Here $ \operatorname{Aut}(H^{2}(S,\BZ
))^{+}\subset \operatorname{Aut}(H^{2}(S,\BZ ))$ is the index 2
subgroup generated by reflections through $-2$-vectors.} $\phi \in \operatorname{Aut}(H^{2}(S,\BZ
))^{+}$ commutes
with the $\ZmodN{N}$ action, it is realized by a
$\ZmodN{N}$-equivariant monodromy deformation of $X$
\cite[Chapt 7., Prop.~5.5]{Huy} and hence gives equality of Donaldson-Thomas
invariants:
\[
\DT^{X }_{n,\beta +dE' } = \DT^{X }_{n,\phi (\beta) +dE' } .
\]
Let $s_{\sigma_{i}}(v) = v+\left\langle \sigma_{i},v \right\rangle
\sigma_{i}$ be the reflection about $\sigma_{i}$. Then $\sigma_{i}$
commutes with $\sigma_{j}$ and consequently the automorphism
\[
\phi  = s_{\sigma_{0}}\circ \dotsb \circ s_{\sigma_{N-1}}
\]
commutes with the action $\ZmodN{N}$.
Since 
\[ \phi (\sigma
+\frac{1}{N}F) = \frac{1}{N}F \]
we therefore have 
\[
\Ztsecondcoeff = q^{-1} e_{\vir}(\Hilb^{\frac{1}{N}F +\bullet
E',\bullet}(X )/E). 
\]

Second, by writing the elliptic fibration $S \to \p^1$ as a Weierstra{\ss} model
and deforming the coefficients,
we may assume that the fibration $S \to \p^1$ is generic among all elliptic fibrations with an $N$-torsion section.
In particular all reducible fibers are of type $I_{n}$.
We also take the elliptic curve $E$ to be generic.

Because any subscheme in $X$ in the class
$\frac{1}{N}F+dE'$ corresponds uniquely to a $\ZmodN{N}$-invariant
subscheme $Z\subset S\times E$ in the class $F+dE$ we may write
\[
\Hilb^{\frac{1}{N}F +\bullet E',\bullet}(X  ) = \Hilb^{F +\bullet
E,N\bullet}(S\times E)^{\ZmodN{N}} .
\]

\begin{defn}\label{defn: horizontal, vertical, and diagonal curves}
We say an irreducible curve component $C\subset Z$ is
\emph{horizontal} if $\pi_{S}(C)$ is zero dimensional, \emph{vertical}
if $\pi_{E}(C)$ is zero dimensional, and \emph{diagonal} otherwise. 
\end{defn}

\begin{lemma}\label{lem: horizontal and diagonal curves and slice
conditions for Hilb(SxE)^{ZN}} Assume that $N>1$. Let $Z\subset S\times E$ be a
$\ZmodN{N}$-invariant subscheme corresponding to a point $[Z]\in
\Hilb^{F+\bullet E,N\bullet}(S\times E)^{\ZmodN{N}}$. Then the curve
support of $Z$ must be a union of horizontal components along with either
\begin{enumerate}
\item exactly $N$ vertical components given by 
\[
\bigcup_{g\in \ZmodN{N}} g(C\times x)
\]
where $C$ is an irreducible component of an $I_{N}$ fiber and $x\in
E$, or
\item a single diagonal component contained in $F_{t}\times E$ and
given by the graph of a map $f :F_{t}\to E$ where $F_{t}\subset S$
is a smooth fiber. 
\end{enumerate}
Moreover, we may define a slice for the $E$ action on
$\Hilb^{F+\bullet E,\bullet}(S\times E)^{\ZmodN{N}}$ by imposing that
\begin{enumerate}
\item [$(1)'$] the curve $C\times x$ is given by $C_{0}\times x_{0}$ where
$C_{0}$ is the component meeting the zero section and $x_{0}\in E$ is
the origin, or
\item  [$(2)'$] the diagonal curve in $F_{t}\times E$ contains the point
$\sigma_{0}(t)\times x_{0}$ so that the map $f  :F_{t}\to E$ is
either a homomorphism and anti-homomorphism, i.e. $\pm f 
(y_{1}+y_{2})=f  (y_{1})+f  (y_{2})$.  
\end{enumerate}
\end{lemma}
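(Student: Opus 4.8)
The plan is to treat the curve support of $Z$ one irreducible component at a time, sorting components as horizontal, vertical, or diagonal per Definition~\ref{defn: horizontal, vertical, and diagonal curves}, and to track separately the two Künneth summands of the class. Since $H_1(S)=0$ we have $H_2(S\times E,\BZ)=H_2(S,\BZ)\oplus H_2(E,\BZ)$, so the class of $Z$ splits as $F$ in the $H_2(S)$-summand and $dE$ in the $H_2(E)$-summand. A horizontal component $\{s\}\times E$ contributes only to the $H_2(E)$-part, while a vertical or diagonal component contributes the pushforward of its class under $\pi_{S}$ to the $H_2(S)$-part. Hence the $S$-projection of the union of the vertical and diagonal components represents exactly $F$. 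I would then localize this support: because $F$ is the nef fiber class with $F\cdot F=0$, every irreducible $S$-projection $B$ satisfies $F\cdot B\ge 0$ and $\sum F\cdot B = F^{2}=0$, so $F\cdot B=0$ for each, forcing $B$ into a fiber of $p:S\to\p^1$; and since components of distinct fibers are linearly independent in $\NS(S)$ whereas they must sum to the single class $F$, all of them lie over one point $t\in\p^1$.

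Next I would split according to the type of this fiber $F_{t}$, using the genericity reduction that all reducible fibers are of type $I_{N}$. Suppose $F_{t}$ is smooth. A vertical component must then be $F_{t}\times\{x\}$; but $g$ fixes $F_{t}$ setwise while translating the $E$-coordinate by $e_{0}$, so its $\ZmodN{N}$-orbit consists of $N$ distinct copies $F_{t}\times\{x+ie_{0}\}$ with $S$-class $NF\neq F$, which is impossible. Hence over a smooth fiber every non-horizontal component is diagonal; as its $S$-projection has class $F$ and $F$ is primitive, $\pi_{S}$ restricts to a degree-one map onto the smooth curve $F_{t}$, i.e.\ an isomorphism, so the component is the graph of a morphism $f:F_{t}\to E$. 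The class constraint then forces the $\ZmodN{N}$-orbit of this graph to account for all of $F$, so it has size one and the graph is itself $\ZmodN{N}$-invariant, giving case~(2). If instead $F_{t}$ is an $I_{N}$ fiber, no component can be diagonal, since each component is a $\p^1$ and admits no non-constant map to $E$; thus the components are vertical, of the form $C_{i}\times\{x_{i}\}$. Here $g$ cyclically permutes the $N$ components $C_{0},\dots,C_{N-1}$ in a single orbit, so the $\ZmodN{N}$-orbit of one vertical component is exactly $\bigcup_{g}g(C_{0}\times\{x\})$, of size $N$, with $S$-class $\sum_{i}C_{i}=F$; the class constraint again leaves no room for further non-horizontal components, giving case~(1).

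For the slices I would exploit that the $E$-action translates all $E$-coordinates simultaneously. In case~(1), exactly one component $C_{0}$ of the $I_{N}$ fiber meets the zero section $\sigma_{0}$, so translating the distinguished copy $C_{0}\times\{x\}$ to $C_{0}\times\{x_{0}\}$ with $x_{0}$ the origin pins down a unique representative, yielding condition $(1)'$. In case~(2), translating by $-f(\sigma_{0}(t))$ moves the graph to pass through $\sigma_{0}(t)\times x_{0}$; since $\sigma_{0}(t)$ and $x_{0}$ are the origins of $F_{t}$ and $E$, rigidity of morphisms of abelian varieties forces $f$ to respect the group structure up to sign, i.e.\ to be a homomorphism or anti-homomorphism, which is condition $(2)'$.

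The main obstacle is the geometric input used in the $I_{N}$ case: that the reducible fibers are precisely of type $I_{N}$ and that translation by the $N$-torsion section $\sigma_{1}$ permutes their $N$ components in a single $N$-cycle. This is where the structure theory of elliptic K3 surfaces with an $N$-torsion section, together with the reduction to a generic fibration, enters via Garbagnati--van Geemen--Sarti \cite{GS, GarS, GarS2}; it is exactly what guarantees that one vertical orbit realizes the full class $F$ and excludes partial configurations arising from other fiber types. The remaining steps are homological bookkeeping with the nef class $F$ together with the rigidity of maps between abelian varieties.
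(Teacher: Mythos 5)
Your proof is correct and follows essentially the same route as the paper's: classify non-horizontal components as vertical or diagonal, use $\ZmodN{N}$-invariance plus the class constraint $F$ to force either one full orbit of an $I_N$-fiber component or a single invariant graph over a smooth fiber (rational components admitting no non-constant map to $E$), and then fix the slice by $E$-translation together with rigidity of pointed maps of elliptic curves. The only difference is that you make explicit the localization of all non-horizontal components into a single fiber via nefness of $F$ and $F^{2}=0$, a step the paper's proof leaves implicit.
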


The lemma gives us the decomposition
\[
\Hilb^{F+\bullet E,N\bullet} (S\times E)^{\ZmodN{N}}/E\cong
\,\,\Hilb^{F+\bullet E,N\bullet}_{\vert } \,\, +  \,\, \Hilb^{F+\bullet E,N\bullet}_{\diag}
\]
where
\[
\,\,\Hilb^{F+dE,Nn}_{\vert } \,\, ,  \,\, \Hilb^{F+d E,Nn}_{\diag}  \,\,
\subset  \,\, \Hilb^{F+dE,Nn}(S\times E)^{\ZmodN{N}} 
\]
parameterize subschemes of type (1) and (2) respectively satisfying
the slice conditions $(1)'$ and $(2)'$ respectively (see figure~\ref{fig: vert and diag curves}).

\begin{rmk}
In the case of $N=1$, the lemma holds as stated with the additional caveat that in case (1), the vertical component can be any smooth fiber. 
\end{rmk}

\begin{figure}
\begin{tikzpicture}[
                    z  = {-15},
                    scale = 0.85]

\begin{scope}[yslant=-0.35,xslant=0]


\begin{scope} [canvas is yz plane at x=0]
\draw [black](0,0) rectangle (3,12);
\end{scope}
\begin{scope} [canvas is xz plane at y=0]
\draw [black](0,0) rectangle (4,12);
\end{scope}
\draw [black](0,0) rectangle (4,3);

\draw [ultra thick,orange,opacity=1.0] 
(0.5  ,0   ,12)--(0.5,2,12);
\draw [ultra thick,orange,opacity=0.2] 
(0.5  ,0   ,8)--(0.5,2,8);
\draw [ultra thick,orange,opacity=0.2] 
(0.5  ,0   ,4)--(0.5,2,4);
\draw [ultra thick,orange,opacity=0.2] 
(0.1  ,1.7   ,12)--(1.5,0.8,12);
\draw [ultra thick,orange,opacity=0.2] 
(0.1  ,1.7   ,8)--(1.5,0.8,8);
\draw [ultra thick,orange,opacity=1.0] 
(0.1  ,1.7   ,4)--(1.5,0.8,4);
\draw [ultra thick,orange,opacity=0.2] 
(0.1  ,0.3   ,12)--(1.5,1.2,12);
\draw [ultra thick,orange,opacity=1.0] 
(0.1  ,0.3   ,8)--(1.5,1.2,8);
\draw [ultra thick,orange,opacity=0.2] 
(0.1  ,0.3   ,4)--(1.5,1.2,4);

\node [below] at (2,0,13) {$S$};
\node [right] at (5,0,7) {$E$};
\node [right] at (4.2,0,12) {$x_{0}$};
\node [right] at (4.2,0,8) {$gx_{0}$};
\node [right] at (4.2,0,4) {$g^{2}x_{0}$};

\begin{scope} [canvas is yz plane at x=4]
\draw [black](0,0) rectangle (3,12);

\draw [pink, ultra thick] (3,0)--(2,12);
\draw [pink, ultra thick] (2,0)--(1,12);
\draw [pink, ultra thick] (1,0)--(0,12);
\end{scope}

\begin{scope} [canvas is xz plane at y=3]
\draw [black](0,0) rectangle (4,12);
\end{scope}

\draw [black](0,0,12) rectangle (4,3,12);
\draw [black,fill, opacity=0.1](0,0,12) rectangle (4,3,12);
\draw [black](0,0,8) rectangle (4,3,8);
\draw [black,fill, opacity=0.1](0,0,8) rectangle (4,3,8);
\draw [black](0,0,4) rectangle (4,3,4);
\draw [black,fill, opacity=0.1](0,0,4) rectangle (4,3,4);

\node [above] at(0.6,2,12) {$\scriptscriptstyle{C_{0}\times x_{0}}$};

\end{scope}

\end{tikzpicture} \caption{ $\ZmodN{3}$-invariant curves in
$S\times E$ satisfying the slice condition of Lemma~\ref{lem:
horizontal and diagonal curves and slice conditions for
Hilb(SxE)^{ZN}}. The dark orange curves are the orbit of $C_{0}\times
x_{0}$ giving an invariant vertical configuration, while the pink
curve is an invariant diagonal curve.  Horizontal curves are also possible, but not shown. }\label{fig: vert and diag
curves}
\end{figure}
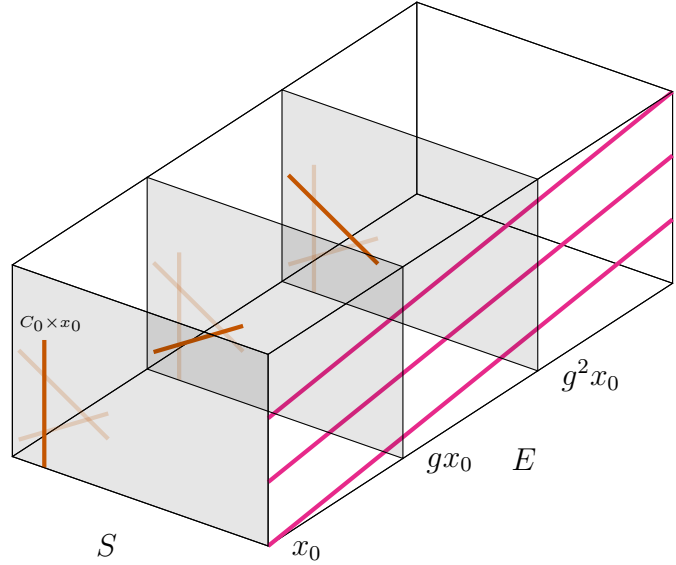

\begin{proof}
Any $\ZmodN{N}$-invariant vertical curve must project to a
$\ZmodN{N}$ orbit in $E$ and thus must be a union of orbits
$\cup_{g\in \ZmodN{N}}g(C\times x)$. In the case of the lemma,
$\sum_{g\in \ZmodN{N}}g(C)$ is in the class $F$ and so $C$ must be a
single component of an $I_{n}$ fiber (as asserted by (1)). Moreover,
since $\{g(C) \}_{g\in \ZmodN{N}}$ are all the components of an $I_{N}$
fiber, we may assume that $C=C_{0}$, the component meeting the zero
section. By $E$ translation, we may then assume that $x=x_{0}$ and
this is unique in the $E$ orbit (as asserted by $(1)'$).

Now let $C\subset S\times E$ be a $\ZmodN{N}$-invariant diagonal curve
with $\pi_{S}(C)$ in the class $F$. Such a curve cannot project to a
singular fiber since it would then give rise to a non-constant map
from a rational curve to $E$. Thus $\pi_{S}$ maps $C$ isomorphically
onto a smooth fiber $F_{t}$ and hence is the graph of a map $f 
:F_{t}\to E$ (as asserted by (2)). Moreover, by a translation by $E$,
we may assume that the map $f $ takes the origin $\sigma_{0}(t)\in
F_{t}$ to $x_{0}\in E$. Any such map is the composition of a group
homomorphism and an automorphism. Since $E$ is generic, the only
automorphisms are $\pm 1$. An so $f $ is a homomorphism or an
antihomomorphism (as asserted by $(2)'$).

Any invariant curve projecting to the class of $F$ must have some
diagonal or vertical component and by the above arguments, it can have
only one or the other.
\end{proof}

\subsection{Diagonal contributions. }

The possible diagonal curves are enumerated by the following
\begin{lemma}\label{lem: formula for number of diagonal curves}
Let $\delta (d)$ be the number of $\ZmodN{N}$-invariant diagonal curves in
$S\times E$ upto translation by $E$. Then 
\[
\sum_{d=1}^{\infty} \delta (d)q^{d} = \frac{-2}{\phi_2(N)}
\sum_{m|N}(E_{2}(q^{m})-1) \mu (m)
\]
where $\mu (m)$ is the M\"obius function, $E_{2}$ is the Eisenstein
series,  and $\phi_2(N)$
is the number of $N$-torsion points in an elliptic curve (c.f. Theorem~\ref{thm:intro2}). 
\end{lemma}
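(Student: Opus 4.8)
The plan is to reduce $\delta(d)$ to a lattice count and then repackage it with the Möbius function. By Lemma~\ref{lem: horizontal and diagonal curves and slice conditions for Hilb(SxE)^{ZN}} a $\ZmodN{N}$-invariant diagonal curve in the class $F+dE$, taken up to translation by $E$, is precisely the graph of a homomorphism $f:F_{t}\to E$ from a smooth fibre $F_{t}\subset S$, normalized by the slice condition $f(\sigma_{0}(t))=0$ and with $\deg f=d$. First I would record that $\ZmodN{N}$-invariance of such a graph is equivalent to the single torsion condition $f(\sigma_{1}(t))=e_{0}$: invariance under $(y,e)\mapsto(y+\sigma_{1}(t),e+e_{0})$ reads $f(y+\sigma_{1}(t))=f(y)+e_{0}$, and since $f$ is a homomorphism this collapses to $f(\sigma_{1}(t))=e_{0}$. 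Thus $\delta(d)$ is the number of pairs $(t,f)$ with $f:F_{t}\to E$ an isogeny of degree $d$ satisfying $f(\sigma_{1}(t))=e_{0}$.

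Next I would translate this into lattices. Writing $E=\BC/\Lambda_{0}$ and fixing a lift $\tilde e_{0}\in\tfrac1N\Lambda_{0}$ of the exact–order–$N$ point $e_{0}$, a degree-$d$ isogeny $f:F\to E$ is the same datum as an index-$d$ sublattice $M\subseteq\Lambda_{0}$ (with $F=\BC/M$ and $f$ the projection $\BC/M\to\BC/\Lambda_{0}$), and the section value records a point $x\in\tfrac1N M/M$ of $\BC/M$ mapping to $e_{0}$ under $f$. The key bookkeeping step is the bijection sending such a pair $(M,x)$ to the lattice $M^{+}=M+\BZ x$; this identifies these pairs with the index-$d$ sublattices $M^{+}$ of the fixed extended lattice $\Lambda_{0}^{e_{0}}:=\Lambda_{0}+\BZ\tilde e_{0}$ that surject onto the cyclic quotient $\Lambda_{0}^{e_{0}}/\Lambda_{0}\cong\ZmodN{N}$. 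Counting these by inclusion–exclusion over the maximal subgroups of $\ZmodN{N}$ gives, for the number $L(d)$ of admissible pairs,
\[
L(d)=\sum_{m\mid\gcd(N,d)}\mu(m)\,\sigma(d/m),\qquad \sigma(n)=\sum_{k\mid n}k,
\]
since the index-$d$ sublattices of $\Lambda_{0}^{e_{0}}$ lying in the unique index-$m$ subgroup number $\sigma(d/m)$.

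Finally I would pass from the arithmetic count $L(d)$ to the geometric count $\delta(d)$. The assignment $t\mapsto(F_{t},\sigma_{1}(t))$ realizes $\p^{1}$ as a cover of the modular curve $X_{1}(N)$, and for generic $S$ and generic $E$ every pair $(M,x)$ produces a generic point of $X_{1}(N)$, hit with multiplicity $\deg(\p^{1}\to X_{1}(N))$. Using that the $j$-map of an elliptic K3 has degree $24$ and that $\deg(X_{1}(N)\to X(1))=\tfrac12\phi_{2}(N)$ for $N\ge3$ (and $\phi_{2}(N)$ for $N\le2$), together with the $\pm1$-automorphism of the generic fibre — which for $N\le2$ admits two signs of $f$ over each $t$ but for $N\ge3$ selects a unique sign — I expect these effects to combine into a single uniform constant, giving $\delta(d)=\tfrac{48}{\phi_{2}(N)}L(d)$. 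Summing with $\sum_{n\ge1}\sigma(n)q^{n}=\tfrac1{24}(1-E_{2}(q))$ then yields
\[
\sum_{d\ge1}\delta(d)q^{d}=\frac{48}{\phi_{2}(N)}\sum_{m\mid N}\mu(m)\sum_{n\ge1}\sigma(n)q^{mn}=\frac{-2}{\phi_{2}(N)}\sum_{m\mid N}\mu(m)\bigl(E_{2}(q^{m})-1\bigr),
\]
as claimed. The main obstacle is precisely this normalization: one must verify that the raw curve count is independent of the generic choice of $S$ and pin down the constant $48/\phi_{2}(N)$ uniformly in $N$, which requires carefully reconciling the degree of the modular parametrization with the $\pm1$-automorphism factor responsible for the different behaviour at $N\le2$ and $N\ge3$.
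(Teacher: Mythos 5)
Your argument is correct and lands on exactly the same key count $\sum_{m\mid\gcd(N,d)}\mu(m)\sigma(d/m)$ as the paper, but the two main steps are organized genuinely differently. The paper does not fix the target torsion point: it counts all homomorphisms $f:F_t\to E$ for which $f(\sigma_1(t))$ has exact order $N$, by M\"obius inversion on the counts $D(d,k,N)$ of homomorphisms with $f(\sigma_1(t))$ of order dividing $k$ (evaluated as $24\sigma(kd/N)$ by factoring $f$ through the degree $N/k$ quotient isogeny $F_t\to F_t/\langle k\sigma_1(t)\rangle$), and then divides by $\phi_2(N)$ on the grounds that the count is equidistributed over the $\phi_2(N)$ possible exact order $N$ targets, finally doubling for anti-homomorphisms. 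You instead fix the target $e_0$ from the outset, encode the data as sublattices of the extended lattice $\Lambda_0+\BZ\tilde e_0$ surjecting onto the cyclic quotient, and recover the $\phi_2(N)$ from $\deg(X_1(N)\to X(1))$. Your route trades the paper's (asserted but not proved) equidistribution claim for a modular-curve degree computation; the price is the $\pm1$ bookkeeping you only sketch at the end, and that is the one step you should nail down. It does close: for $N\ge 3$ one has $\deg(\p^1\to X_1(N))=48/\phi_2(N)$ and each $t$ in a fibre over an admissible $(M,x)$ carries a \emph{unique} admissible isogeny (the other sign sends $\sigma_1(t)$ to $-e_0\ne e_0$), whereas for $N\le 2$ the degree is $24/\phi_2(N)$ but both signs are admissible since $e_0=-e_0$; the two dichotomies are both governed by whether $-1\in\Gamma_1(N)$, equivalently whether $\mathrm{Aut}(F,P)$ is trivial, and they cancel to give the uniform constant $48/\phi_2(N)$. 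You should also note that distinct admissible pairs $(M,x)$ define distinct points of $X_1(N)$ (no overcounting): for non-CM $E$ distinct index $d$ sublattices give non-isomorphic curves, and $(M,x)$, $(M,-x)$ cannot both be admissible when $N\ge 3$.
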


\begin{proof}\footnote{We warmly thank Greg Martin for assistance with
this proof.}
By Lemma~\ref{lem: horizontal and diagonal curves and slice
conditions for Hilb(SxE)^{ZN}} $(2)'$, we need to count maps 
\[
f  : F_{t}\to E
\]
of degree $d$ where $F_{t}\subset S$ is the fiber over some point
$t\in \BP^{1}$ and the map $f $ is a homomorphism or
anti-homomorphism such that the graph in $F_{t}\times E$ is
$\ZmodN{N}$-invariant. The graph of $f $ is invariant if and only if
$f $ is $\ZmodN{N}$-equivariant, i.e. the $N$-torsion point
$s=\sigma_{1}(t)\in F_{t}$ is mapped to the fixed $N$-torsion point
$e_{1}\in E$.

We first count all degree $d$ homomorphisms $f  :F_{t}\to E$ without
imposing the additional condition $f  (s)=e_{1}$. It is well known
that there are exactly
\[
\sigma (d) = \sum_{k|d} k 
\]
elliptic curves $F$ admitting a degree $d$ homomorphism to $E$ (unique
for $E$ generic), and since every such curve appears exactly 24 times
in the $K3$ elliptic fibration, the number of such homomorphisms
$F_{t}\to E$ is $ 24\sigma (d)$.

We can refine the count of homomorphisms $f  :F_{t}\to E$ as follows:
\begin{align*}
D(d,k,N)& = \# \left\{f  :F_{t}\to E \text{ such that $f  (s)$ has
order dividing $k$.} \right\}\\
E(d,k,N)& = \# \left\{f  :F_{t}\to E \text{ such that $f  (s)$ has
order exactly $k$.} \right\}
\end{align*}

It will suffice to compute $E(d,N,N)$ since if $\phi_2(N)$ is the number of
$N$-torsion points on $E$, then 
\[
\phi_2(N) \delta (d) = 2E(d,N,N)
\]
since the left hand side counts all homomorphisms and
anti-homomorphisms where $f  (s)$ has order exactly $N$.

We apply M\"obius inversion to
\[
D(d,k,N) = \sum_{b|k} E(d,b,N)
\]
to get
\[
E(d,k,N) = \sum_{b|k}D(d,b,N)\mu (k/b).
\]
If the order of $f  (s)$ divides $k$, then $kf  (s) = f 
(ks)=x_{0}$ and so $f  :F_{t}\to E$ factors through the map
$F_{t}\to F_{t}/\left\langle ks \right\rangle$ which is a map of
degree $N/k$. Therefore
\begin{align*}
D(d,k,N)& = \# \left\{\widetilde{f } :F_{t}/\left\langle ks
\right\rangle\to E \text{ such that the degree of $\widetilde{f  }$ is  $kd/N$} \right\}\\
&=\begin{cases}
24\sigma (kd/N)& \text{if $N|kd$,}\\
0&\text{otherwise.}
\end{cases}
\end{align*}
Hence we find 
\begin{align*}
E(d,N,N)& = \sum_{b|N} D(d,b,N)\mu (N/b)\\
&= \sum_{\substack{b|N\\N|bd}} 24 \sigma (bd/N)\mu (N/b)\\
&= \sum_{\substack{m|N\\m|d}} 24\sigma (d/m)\mu (m).
\end{align*}
Therefore we get
\begin{align*}
\sum_{d=1}^{\infty} \delta(d) q^{d} &= \frac{48}{\phi_2(N)}
\sum_{d=1}^{\infty }\sum_{\substack{m|N\\m|d}} \sigma (d/m) (q^{\frac{d}{m}})^{m}\mu (m)\\
&=  \frac{48}{\phi_(N)} \sum_{m|N} \sum_{k=1}^{\infty} \sigma (k) q^{mk} \mu (m)\\
&=  \frac{-2}{\phi_2(N)} \sum_{m|N} \left(E_{2}(q^{m})-1 \right) \mu (m). 
\end{align*}
\end{proof}

We compute the full contribution of the diagonal components to the the
DT partition function:
\begin{lemma}\label{lem: evir(Hilb_{diag})}
\[
e_{\vir}\left(\Hilb_{\diag}^{F+\bullet E,N\bullet} \right) =
\frac{-2q}{\phi_{2}(N)\Delta_{N}(q)} \sum_{m|N} (E_{2}(q^{m})-1)\mu (m).
\]
\end{lemma}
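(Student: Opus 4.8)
The plan is to reduce the virtual Euler characteristic of the diagonal stratum to a product of two series that are already under control: the count $\delta(d_{0})$ of diagonal curves from Lemma~\ref{lem: formula for number of diagonal curves}, and the generating series of the orbifold Hilbert scheme of points on $[S/\ZmodN{N}]$, for which we established $\sum_{n}e(\Hilb^{n}([S/\ZmodN{N}]))q^{n}=q\,\Delta_{N}(q)^{-1}$.

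First I would unwind the geometry of a point of $\Hilb_{\diag}^{F+dE,Nn}$ using Lemma~\ref{lem: horizontal and diagonal curves and slice conditions for Hilb(SxE)^{ZN}}$(2)'$: it consists of a single diagonal component $\Gamma_{f}$, the graph of a degree-$d_{0}$ homomorphism or anti-homomorphism $f:F_{t}\to E$ pinned down by the slice condition, together with horizontal components and embedded or floating points. The curve $\Gamma_{f}$ already carries the class $F+d_{0}E$, so the horizontal components contribute only the class $\ell E$ with $d=d_{0}+\ell$; being horizontal and $\ZmodN{N}$-invariant they form $W\times E$ for a length-$\ell$ subscheme $W\subset S$, and invariance identifies these with $0$-dimensional substacks of $[S/\ZmodN{N}]$ exactly as in the $\Ztfirstcoeff$ computation. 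The only difference is that here $W$ is unconstrained on $S$ (no sections are subtracted from the class), so the full orbifold $[S/\ZmodN{N}]$ appears rather than the complement of the sections. Since each unit of length adds $E$ to the class, the variable $q$ simultaneously records the degree $d_{0}$ of $\Gamma_{f}$ and the length $\ell$ of $W$, and the aim is the factorization
\[
e_{\vir}\big(\Hilb_{\diag}^{F+\bullet E,N\bullet}\big)=\left(\sum_{d_{0}\geq 1}\delta(d_{0})q^{d_{0}}\right)\left(\sum_{\ell\geq 0}e\big(\Hilb^{\ell}([S/\ZmodN{N}])\big)q^{\ell}\right).
\]

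Next I would pin down the dependence on $p$ and the Behrend weighting. The diagonal curve $\Gamma_{f}\cong F_{t}$ has arithmetic genus one, so $\chi(\CO_{\Gamma_{f}})=0$, and the invariant horizontal part satisfies $\chi(\CO_{W\times E})=\operatorname{length}(W)\cdot\chi(\CO_{E})=0$; thus the surviving configurations carry no $p$-weight, which is what forces the diagonal contribution to be a function of $q$ alone. Any configuration that is not rigid in the $E$-direction—most notably a floating point free to move in $E$, or a point sliding along $\Gamma_{f}\cong E$—sweeps out an orbit with $e(E)=0$ and so contributes nothing to the virtual Euler characteristic by the zero-Euler-characteristic-orbit principle recorded in the Preliminaries. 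On the surviving locus the Hilbert scheme of points on $[S/\ZmodN{N}]$ is smooth and holomorphic symplectic, so its Behrend function is constant and the virtual Euler characteristic equals the ordinary one up to a global sign; a sign bookkeeping parallel to the $\Ztfirstcoeff$ computation fixes this sign so as to reproduce the stated formula. Substituting $\sum_{\ell}e(\Hilb^{\ell}([S/\ZmodN{N}]))q^{\ell}=q\,\Delta_{N}(q)^{-1}$ and Lemma~\ref{lem: formula for number of diagonal curves} then yields
\[
e_{\vir}\big(\Hilb_{\diag}^{F+\bullet E,N\bullet}\big)=\frac{q}{\Delta_{N}(q)}\sum_{d_{0}\geq 1}\delta(d_{0})q^{d_{0}}=\frac{-2q}{\phi_{2}(N)\Delta_{N}(q)}\sum_{m\mid N}\big(E_{2}(q^{m})-1\big)\mu(m).
\]

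The main obstacle, and the point at which the assumed Behrend-function conjecture enters, is making the factorization and the vanishing of non-rigid strata rigorous when the support of $W$ meets the fiber $F_{t}$ carrying $\Gamma_{f}$: there $W\times E$ intersects $\Gamma_{f}$, the Hilbert scheme acquires embedded points along the intersection, and one must verify that these resonant configurations neither break the product structure nor introduce spurious $p$-dependence, and that the Behrend function is still governed by Conjecture~21 of \cite{Bryan-Kool}. I expect this to follow by stratifying according to the incidence of $W$ with $F_{t}$ and applying the motivic formula \eqref{eqn: evir is motivic with formal nghds} together with the $E$-equivariant vanishing argument stratum by stratum, in close parallel with the treatment of the section neighbourhood in the computation of $\Ztfirstcoeff$.
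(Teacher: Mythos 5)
Your overall factorization target is the right one, and your treatment of the part of the configuration living away from the fiber $F_{t}$ carrying the diagonal curve (reduction to $E$-invariant subschemes, identification with $\Hilb^{\bullet}([S/\ZmodN{N}])$, smoothness and constancy of the Behrend function there) matches the paper. But the step you flag as ``the main obstacle'' is a genuine gap, and the tool you propose for it will not close it. Once the slice condition has pinned the diagonal curve $C$ to pass through $\sigma_{0}(t)\times x_{0}$, there is no residual $E$-translation acting on the component $\Hilb_{\diag,C}$: translating by $E$ moves $C$ itself, so the ``zero-Euler-characteristic-orbit'' argument via the $E$-action is unavailable precisely on the locus $\widehat{F}_{t}\times E$ where horizontal components and embedded points can interact with $C$. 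Stratifying by incidence of $W$ with $F_{t}$ does not help, because the problematic configurations (embedded points on $C$, horizontal components through $F_{t}\times E$) are exactly those with nontrivial incidence, and a priori they contribute with unknown Behrend weights and nontrivial $p$-powers.

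The paper's resolution is a different group action that you are missing: the Mordell--Weil group $MW$ of sections of $\widehat{F}_{t}\to\Delta_{t}$ acts on $\widehat{F}_{t}\times E$ by $\sigma\cdot(y,x)=(y+\sigma,\,x+f(\sigma))$, which \emph{preserves} the curve $C=\Gamma_{f}$ (unlike the $E$-action), preserves the Behrend function, and has zero-Euler-characteristic orbits except at fixed points. Since no horizontal or zero-dimensional component is $MW$-invariant, the only fixed point is $[C]$ itself, forcing $e_{\vir}\bigl(\Hilb_{\diag,C}^{F+\bullet E,N\bullet}(\widehat{F}_{t}\times E)\bigr)=1$ and in particular killing all $p$-dependence in one stroke. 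With that local factor equal to $1$, the product with $e_{\vir}(\Hilb^{\bullet E,N\bullet}(U\times E)^{\ZmodN{N}})=q/\Delta_{N}(q)$ and the sum over the $\delta(d)$ curves gives the lemma. Note also that this argument is unconditional: Conjecture~21 of \cite{Bryan-Kool} is needed only for the vertical contribution (Proposition~\ref{prop: evir(Hilb_{vert})}), not here, so your appeal to it in the diagonal case is both unnecessary and a sign that the local analysis has not actually been carried out.
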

\begin{proof}
Let $C\subset F_{t}\times E$ be a $\ZmodN{N}$-invariant diagonal curve
satisfying the slice condition and let 
\[
\Hilb_{\diag ,C}^{F+\bullet E,N\bullet} \subset \Hilb_{\diag}^{F+\bullet E,N\bullet} 
\]
be the component parameterizing subschemes containing $C$. Such
subschemes are a union of $C$, horizontal components, and zero
dimensional components. Consequently, such a subscheme is a disjoint
union of components supported on $\widehat{F}_{t}\times E$ and on
$U\times E$ where $\widehat{F}_{t}$ is the
formal neighborhood of $F_{t} $ inside of $S$ and $U=S \setminus F_{t}$. Thus
\[
\Hilb_{\diag ,C}^{F+\bullet E,N\bullet} = \Hilb_{\diag,C}^{F+\bullet
E,N\bullet}(\widehat{F}_{t}\times E) \cdot \Hilb^{\bullet
E,N\bullet }(U\times E)^{\ZmodN{N}}  
\]
where the first factor is the Hilbert scheme parameterizing
$\ZmodN{N}$-invariant subschemes $Z\subset S\times E$ whose support is
contained in $\widehat{F}_{t}\times E$ and which contains the diagonal
curve $C$.

As in the previous subsection, the $E$ action on $\Hilb^{\bullet
E,N\bullet }(U\times E)^{\ZmodN{N}}$ reduces the Euler computation to
$E$-invariant subschemes, which necessarily pullback from
zero-dimensional subschemes of the stack $[U/\ZmodN{N}]$:
\begin{align*}
e_{\vir} \left(\Hilb^{\bullet E,N\bullet }(U\times E)^{\ZmodN{N}}
\right) & =
\sum_{n=0}^{\infty} e(\Hilb^{n}([U/\ZmodN{n}])) \, q^{n}\\
&= \sum_{n=0}^{\infty} e(\Hilb^{n}([S/\ZmodN{n}])) \, q^{n}\\
&=\frac{q}{\Delta_{N}(q)}. 
\end{align*}
Here we have used the fact that $e(S\setminus U) = e(F_{t})=0$.

We construct a group action on $ \Hilb_{\diag,C}^{F+\bullet
E,N\bullet}(\widehat{F}_{t}\times E)$ as follows. Let $\Delta_{t}$ be
the formal neighborhood of $t\in \BP^{1}$ and let $\widehat{F}_{t}\to
\Delta_{t}$ be the restriction of $S\to \BP^{1}$. Let $MW$ denote the
Mordell-Weil group of sections of $\widehat{F}_{t}\to
\Delta_{t}$. $MW$ acts on $\widehat{F}_{t}$ by translation and we can
define an action of $MW$ on $\widehat{F}_{t}\times E$ which preserves
the curve $C$ by composing with the appropriate action of $E$. In
particular, on closed points the action of $\sigma \in MW$ is given by
\[
\sigma (y,x) = (y+\sigma ,x+f(\sigma ))
\]
where $C$ is given by the graph of $f:F_{t}\to E$. This action induces
an action of $MW$ on $ \Hilb_{\diag,C}^{F+\bullet
E,N\bullet}(\widehat{F}_{t}\times E)$. The group $MW$ is a
pro-algebraic group whose action on $ \Hilb_{\diag,C}^{F+d
E,n}(\widehat{F}_{t}\times E)$ factors through an algebraic group. The
orbits of the $MW$ action on $\Hilb_{\diag,C}^{F+\bullet
E,N\bullet}(\widehat{F}_{t}\times E)$ have zero Euler characteristic
unless they are fixed points. Moreover, $MW$ preserves the Behrend
function since the action on the strata $\Hilb_{\diag,C}^{F+\bullet
E,N\bullet}(\widehat{F}_{t}\times E) $ extends to an action on the
formal neighborhood of this strata in the whole Hilbert
scheme\footnote{See \cite{Banana} for a careful discussion of the
action of Mordell-Weil groups on Hilbert schemes.}. But the only
$MW$-invariant subscheme is $C$ itself since no horizonal component or
zero-dimensional component is $MW$ invariant. Therefore we have
\begin{align*}
e_{\vir}(\Hilb_{\diag,C}^{F+\bullet
E,N\bullet}(\widehat{F}_{t}\times E))& =e_{\vir}(\Hilb_{\diag,C}^{F+\bullet
E,N\bullet}(\widehat{F}_{t}\times E)^{MW} )\\
& = e_{\vir}(pt)\\
&=1
\end{align*}
Therefore
\[
e_{\vir}(\Hilb_{\diag}^{F+\bullet
E,N\bullet}) = \left(\sum_{d=1}^{\infty}\delta (d)\, q^{d} \right) \frac{q}{\Delta_{N}(q)}
\]
and then lemma~\ref{lem: formula for number of diagonal curves}
completes the proof of Lemma~\ref{lem: evir(Hilb_{diag})}.
\end{proof}

\subsection{Vertical contributions.}
The contribution of subschemes containing vertical components to the
DT partition function requires a new vertex computation which is carried
out in this subsection.

\begin{prop}\label{prop: evir(Hilb_{vert})}
Assuming \cite[Conj.~21]{Bryan-Kool}, we have
\[
e_{\vir}\left(\Hilb_{\vert}^{F+\bullet E,N\bullet} \right) =
-\frac{q}{\Delta_{N}(q)}\cdot 24 \frac{\phi_{1}(N)}{\phi_{2}(N)}\cdot 
\left\{\wp (q^{N}) - \frac{1}{12} E_{2}(q^{N})+\frac{1}{12}\delta_{1,N} \right\}
\]
where $\delta_{1,N}$ is $1$ if $N=1$ and $0$ otherwise.
\end{prop}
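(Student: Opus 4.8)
The plan is to mirror the strategy of Lemma~\ref{lem: evir(Hilb_{diag})}: stratify $\Hilb_{\vert}^{F+\bullet E, N\bullet}$ by its vertical curve, factor off the free points, and reduce everything to a single local computation along one $(-2)$-curve. By the slice description of Lemma~\ref{lem: horizontal and diagonal curves and slice conditions for Hilb(SxE)^{ZN}}$(1)'$, every point of $\Hilb_{\vert}$ carries the $\ZmodN{N}$-orbit $\bigcup_{g}g(C_{0}\times x_{0})$ of a single curve, where $C_{0}$ is the component meeting the zero section of an $I_{N}$ fiber $\Phi$ and $x_{0}\in E$ is the origin. Since $g$ permutes a full set of $N$ distinct fiber components freely only on the $I_{N}$ fibers, these configurations are indexed by the $I_{N}$ fibers, all equivalent under monodromy. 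Hence I would first write
\[ e_{\vir}\!\left(\Hilb_{\vert}^{F+\bullet E, N\bullet}\right) = \#\{I_{N}\text{ fibers}\}\cdot e_{\vir}\!\left(\Hilb_{\vert, C_{0}}^{F+\bullet E, N\bullet}\right) \]
and reduce to the contribution of one fixed fiber $\Phi$.

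Next I would factor the single-fiber contribution, exactly as in the diagonal case, as a product of a local factor supported on the formal neighborhood $\widehat{\Phi}\times E$ and a free-point factor supported on $(S\setminus\Phi)\times E$. Localizing the $E$-action and invoking Lemma~\ref{lem: formula for e(Hilb(Y/G))}, the free points contribute the Hilbert-scheme-of-points series of $[S/\ZmodN{N}]$, which by the $t^{-1/N}$ computation equals $q/\Delta_{N}(q)$; the minor discrepancy caused by $e(\Phi)\neq 0$ --- a factor $\prod_{k}(1-q^{Nk})$, exactly the type of factor that appeared and cancelled in the $t^{-1/N}$ computation --- is absorbed into the local factor. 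The single overall sign $-1$ arises, just as there, from the difference between the Behrend function of $\Hilb(S\times E)$ and that of $\Hilb(S\times E)/E$. This isolates the essential object: the local generating series of $\ZmodN{N}$-invariant thickenings of $C_{0}\times x_{0}$ inside $\widehat{\Phi}\times E$, carrying the vertical curve, the $d$ horizontal elliptic curves $\{y\}\times E$ through points of $C_{0}$, and the embedded points along $C_{0}$.

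The heart of the argument, and the step I expect to be the main obstacle, is this local vertex computation. Because $C_{0}\cong\p^{1}$ is a $(-2)$-curve, the ambient threefold is locally the total space of $\CO(-2)\oplus\CO$ over $\p^{1}$; the two nodes of $\Phi$ glue $C_{0}$ to its neighbors $C_{\pm 1}$, and under $\ZmodN{N}$ these gluings identify the boundary data of $C_{0}$ with that of its translates, closing the chain of $N$ vertices into one configuration and replacing $q$ by $q^{N}$ (the $E$-degree being measured on $E' = E/\ZmodN{N}$). I would introduce the $\BC^{*}$-action scaling the $\CO(-2)\oplus\CO$ fibers, stratify the Hilbert scheme by the monomial (topological-vertex) data along $C_{0}$ as in \cite{Bryan-Kool, Bryan-Kool-Young}, and sum the Euler characteristics. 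Assuming \cite[Conj.~21]{Bryan-Kool}, the Behrend function is constant with the predicted sign on each stratum, so that this signed sum computes $e_{\vir}$ rather than the naive Euler characteristic; without this input only the unweighted count is accessible, which is precisely why the conjecture enters the hypothesis. I expect the resulting series, after cancelling the $\prod_{k}(1-q^{Nk})$ factors from the point bookkeeping, to organize into $\wp(q^{N},p)-\tfrac{1}{12}E_{2}(q^{N})$, with the $q^{0}$-term $\tfrac{p}{(1-p)^{2}}$ recording the bare vertical curve with its embedded points and the higher terms recording the horizontal curves in the elliptic direction.

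Finally I would handle $N=1$ separately. By the Remark following Lemma~\ref{lem: horizontal and diagonal curves and slice conditions for Hilb(SxE)^{ZN}}, the vertical curve may then be any smooth fiber, whose normal bundle in $S$ is $\CO\oplus\CO$ rather than $\CO(-2)\oplus\CO$; this change of local geometry produces the correction $+\tfrac{1}{12}\delta_{1,N}$. It remains to identify the fiber count. The number of $I_{N}$ fibers on a generic elliptically fibered $S$ with $N$-torsion section is $24\,\phi_{1}(N)/\phi_{2}(N)$, which can be read off the Weierstrass models of \cite{GS, GarS, GarS2} and, by Table~\ref{Table1}, equals $|\mathrm{Fix}(g)|$ (reflecting the isogeny between $S\to\p^{1}$ and its $\ZmodN{N}$-quotient fibration, which interchanges $I_{N}$ and $I_{1}$ fibers). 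Combining the fiber count, the free-point factor $q/\Delta_{N}(q)$, the sign $-1$, and the local series then yields the stated formula; a M\"obius/torsion count parallel to Lemma~\ref{lem: formula for number of diagonal curves} provides an independent check of the $\phi_{1}(N)/\phi_{2}(N)$ normalization.
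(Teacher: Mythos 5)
Your outline follows the same route as the paper's proof (reduce to the $24\phi_1(N)/\phi_2(N)$ many $I_N$ fibers, split off the free points to produce $q/\Delta_N(q)$, localize, and feed the local data into the topological vertex with Conjecture~21 supplying the Behrend weights), but the step you yourself flag as ``the main obstacle'' is left undone, and the one concrete mechanism you propose for it does not work as stated. You suggest localizing with respect to a $\BC^*$ ``scaling the $\CO(-2)\oplus\CO$ fibers'' over $C_0$. That torus fixes $C_0$ pointwise, so it does not force the horizontal components $Z\times E$ to concentrate at finitely many points, and --- more seriously --- it is not induced by an automorphism of the surface, so there is no reason it preserves the Behrend function of $\Hilb(X)/E$ or kills the non-fixed orbits. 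The action actually used is of a different nature: one takes the Mordell--Weil group of sections of $\widehat{C}\to\Delta_t$; because the group of an $I_N$ fiber is $\BC^*\times\ZmodN{N}$ and (unlike for smooth fibers) the restriction of $MW$ to the group of the fiber splits, one obtains a genuine $\BC^*$ acting by \emph{translations along the fiber}, extending to the formal neighborhood and hence preserving $\nu$. Its fixed points are the nodes of $C$, which is exactly what pins the horizontal components to monomial thickenings $Z_\lambda\times E$ at the nodes and produces the curves $C(\lambda)$.

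Even granting the correct localization, the assembly into $\wp(q^N,p)-\tfrac1{12}E_2(q^N)$ is asserted rather than derived. The missing chain is: identify the local generating series as the vertex ratio $\Vtilde_{\lambda\square\emptyset}\,\Vtilde_{\lambda\emptyset\square}\,\Vtilde^{-2}_{\lambda\emptyset\emptyset}$ (respectively $\Vtilde_{\lambda\square\square}\Vtilde^{-1}_{\lambda\emptyset\emptyset}$ for $N=1$) by distinguishing the two nodes met by the vertical component from the remaining $N-2$; renormalize and reindex by $\chi(\CO_{C'(\lambda)})=-\lambda_1-\lambda_1'+1-\delta_{N,1}$ to get $\delta_{N,1}-\Elambda(y)\Elambda(y^{-1})$; compute the Behrend value $\nu(C'(\lambda))=-(-1)^{\chi(\CO_{C'(\lambda)})}$ from the smoothness of $\Hilb(X)/E$ at $C'(\lambda)$ (this is where Conjecture~21 is applied pointwise, not stratum by stratum as you suggest); and finally sum over $\lambda$ against $q^{N|\lambda|}$ and evaluate via the Bloch--Okounkov two-point function $F(p,p^{-1};q^N)=-\wp(p,q^N)+\tfrac1{12}E_2(q^N)$. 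Your treatment of $N=1$ also conflates two distinct corrections: the $\delta_{N,1}$ inside the braces of the single-fiber formula comes from the vertex at the nodal fibers (the vertical component is itself nodal, hence the $C_{\lambda\square\square}$ local model), while the replacement of $\delta_{1,N}$ by $\tfrac1{12}\delta_{1,N}$ in the global statement comes from the additional vertical curves supported on smooth fibers, which exist only when $N=1$.
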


To prove Proposition~\ref{prop: evir(Hilb_{vert})}, we begin by observing
that $24\frac{\phi_{1}(N)}{\phi_{2}(N)}$ is the number of $I_{N}$
fibers in $S$. Let $C = C_{0}\cup \dotsb \cup C_{N-1}$ be a fixed
$I_{N}$ fiber and let 
\[
\Hilb_{\vert,C}^{F+\bullet E,N\bullet} \subset \Hilb_{\vert}^{F+\bullet E,N\bullet}
\]
be the component parameterizing curves whose vertical components
project to $C$. To prove the proposition, it then suffices to prove\footnote{Our Lemma~\ref{lem: horizontal and diagonal curves and slice
conditions for Hilb(SxE)^{ZN}}, which asserts that the vertical components are supported on the $I_N$ fibers, applies only for $N>1$. As previously remarked, vertical curves in the $N=1$ case can also occur at smooth fibers. The contribution from subschemes containing vertical curves on smooth fibers is given by 
\[\frac{24q}{\Delta(q)}\cdot e(\BP^1-\{24pts\})=\frac{24q}{\Delta(q)}\left(\frac{1}{12}-1\right).\]
This accounts for the replacement of $\frac{1}{12}\delta_{1,N}$ with $\delta_{1,N}$. See the computation in \cite{Bryan-K3xE} for details.
}
\[
e_{\vir}\left(\Hilb_{\vert,C}^{F+\bullet E,N\bullet} \right) =
-\frac{q}{\Delta_{N}(q)}\cdot 
\left\{\wp (q^{N}) - \frac{1}{12} E_{2}(q^{N})+\delta_{1,N} \right\}
\]

Let $\widehat{C}$ be the formal neighborhood of $C$ in $S$ and let
$U=S\setminus C$. 

Then, as in the previous cases (using similar notation), we have
\begin{multline*}
e_{\vir}\left(\Hilb_{\vert ,C}^{F+\bullet E,N\bullet} \right) \\
 =
e_{\vir}\left( \Hilb_{\vert ,C}^{F+\bullet
E,N\bullet} (\widehat{C}\times E) \right) \cdot e_{\vir}\left( \Hilb^{\bullet E,N\bullet}(U\times E)^{\ZmodN{N}} \right)
\end{multline*}
and
\begin{align*}
e_{\vir}\left( \Hilb^{\bullet E,N\bullet}(U\times E)^{\ZmodN{N}}
\right) &= \sum_{n=0}^{\infty} e\left(\Hilb^{n}([U/\ZmodN{N}]) \right)
\, q^{n}\\
&=\frac{q}{\Delta_{N}(q)} \prod_{k=1}^{\infty} (1-q^{kN})
\end{align*}
since $[S/\ZmodN{N}]=[U/\ZmodN{N}]\cup [C/\ZmodN{N}]$ and $\ZmodN{N}$
acts freely on $C$ with $e(C/\ZmodN{N})=1$.

As in the previous case, we get an action of $MW$, the Mordell-Weil
group of sections of $\widehat{C}$, on $\Hilb_{\vert ,C}^{F+\bullet
E,N\bullet}(\widehat{C}\times E)$. Note that the group of the fiber $C$
is $\BC^{*}\times \ZmodN{N}$ and (unlike the case of smooth fibers)
the restriction of $MW$ to the group of the fiber
splits. Consequently, we get a $\BC^{*}$ action on $\Hilb_{\vert
,C}^{F+\bullet E,N\bullet}(\widehat{C}\times E)$. Moreover this action
preserves the Behrend function, so it will suffice to prove the
following:
\begin{multline}\label{eqn: formula for evir(HilbvertC)}
 e_{\vir}\left(\Hilb_{\vert ,C}^{F+\bullet
E,N\bullet}(\widehat{C}\times E)^{\BC^{*}} \right)
=\\
-\prod_{k=1}^{\infty} (1-q^{kN})^{-1}\cdot \left\{\wp(q^{N})
-\frac{1}{12}E_{2}(q^{N}) +\delta_{1,N} \right\}.
\end{multline}

The horizontal components of a $\BC^{*}$ invariant subscheme supported
on $\widehat{C}\times E$ must be supported at the fixed points of
$\BC^{*}$, namely over the nodes in $C$. There are formal local
coordinates $(x,y)$ on $S$ at each node such that $C$ is formally
locally given by $xy=0$ and $\mu \in \BC^{*}$ acts by $\mu 
(x,y)=(\mu x,\mu^{-1}y)$. Then to be $\BC^{*}$-invariant, the
horizontal components must be of the form $Z_{\lambda }\times E$ where
$\lambda$ is an integer partition and $Z_{\lambda}$ is the length
$|\lambda |$ zero dimensional subscheme supported at a node of $C$ and
given by the monomial ideal $(x^{i}y^{j})_{(i,j)\not\in  \lambda}$.

We thus see that a subscheme supported on $\widehat{C}\times E$ which
is both $\ZmodN{N}$ and $\BC^{*}$ invariant must have its one
dimensional components given by
\[
C(\lambda ) = \bigcup_{g\in \ZmodN{N}} g\left(C_{0}\times x_{0} \cup
Z_{\lambda} \times E \right)
\]
(see figure~\ref{fig: the curve C(lambda)}).

\begin{figure}
\usetikzlibrary{shapes.geometric}

\newcommand{\partitionbox}[2]{
	(#1, #2) --
	++(0.1, 0.0) --
	++(0.0, 0.1) --
	++(-0.1,0.0) --
	cycle
}

\newcommand{\partdraw}{
\draw [blue] \partitionbox{0}{0} ;
\draw [blue] \partitionbox{0}{0.1} ;
\draw [blue] \partitionbox{0.1}{0} ;
\draw [blue] \partitionbox{0.2}{0} ;
}

\begin{tikzpicture}[
                    z  = {-15},
                    scale = 0.85]

\begin{scope}[yslant=-0.35,xslant=0]

\begin{scope} [canvas is xy plane at z=12]
  \begin{scope}[shift={(1.5,.7)}, scale=1.2,yslant=0.45] 
  \partdraw;
  \end{scope}
  \begin{scope}[shift={(1.5,2.3)},scale=1.2,rotate=-90, xslant=0.9] 
  \partdraw;
  \end{scope}
  \begin{scope}[shift={(2.6,1.2)}, scale=1.2,rotate=135,xslant=0.33] 
  \partdraw;
  \end{scope}
\end{scope}
\begin{scope} [canvas is xy plane at z=8]
  \begin{scope}[shift={(1.5,.7)}, scale=1.2,yslant=0.45] 
  \partdraw;
  \end{scope}
  \begin{scope}[shift={(1.5,2.3)},scale=1.2,rotate=-90, xslant=0.9] 
  \partdraw;
  \end{scope}
  \begin{scope}[shift={(2.6,1.2)}, scale=1.2,rotate=135,xslant=0.33] 
  \partdraw;
  \end{scope}
\end{scope}
\begin{scope} [canvas is xy plane at z=4]
  \begin{scope}[shift={(1.5,.7)}, scale=1.2,yslant=0.45] 
  \partdraw;
  \end{scope}
  \begin{scope}[shift={(1.5,2.3)},scale=1.2,rotate=-90, xslant=0.9] 
  \partdraw;
  \end{scope}
  \begin{scope}[shift={(2.6,1.2)}, scale=1.2,rotate=135,xslant=0.33] 
  \partdraw;
  \end{scope}
\end{scope}

\begin{scope} [canvas is yz plane at x=0]
\draw [black](0,0) rectangle (3,12);
\end{scope}
\begin{scope} [canvas is xz plane at y=0]
\draw [black](0,0) rectangle (4,12);
\end{scope}
\draw [black](0,0) rectangle (4,3);

\draw [ultra thick, blue,opacity=1.0]
(1.5,0.7,0)--(1.5,0.7,12)
(2.6,1.2,0)--(2.6,1.2,12)
(1.5,2.3,0)-- (1.5,2.3,12)
;

\draw [ultra thick,orange,opacity=1.0] 
(1.5  ,0   ,12)--(1.5,3,12);
\draw [ultra thick,orange,opacity=0.2] 
(1.5  ,0   ,8)--(1.5,3,8);
\draw [ultra thick,orange,opacity=0.2] 
(1.5  ,0   ,4)--(1.5,3,4);

\draw [ultra thick,orange,opacity=0.2] 
(1.1  ,2.7   ,12)--(3.0,0.8,12);
\draw [ultra thick,orange,opacity=0.2] 
(1.1  ,2.7   ,8)--(3.0,0.8,8);
\draw [ultra thick,orange,opacity=1.0] 
(1.1  ,2.7   ,4)--(3.0,0.8,4);

\draw [ultra thick,orange,opacity=0.2] 
(1.1  ,0.5   ,12)--(3.0,1.4,12);
\draw [ultra thick,orange,opacity=1.0] 
(1.1  ,0.5   ,8)--(3.0,1.4,8);
\draw [ultra thick,orange,opacity=0.2] 
(1.1  ,0.5   ,4)--(3.0,1.4,4);

\node [below] at (2,0,13) {$S$};
\node [right] at (5,0,7) {$E$};
\node [right] at (4.2,0,12) {$x_{0}$};
\node [right] at (4.2,0,8) {$gx_{0}$};
\node [right] at (4.2,0,4) {$g^{2}x_{0}$};

\begin{scope} [canvas is yz plane at x=4]
\draw [black](0,0) rectangle (3,12);
\end{scope}

\begin{scope} [canvas is xz plane at y=3]
\draw [black](0,0) rectangle (4,12);
\end{scope}

\draw [black](0,0,12) rectangle (4,3,12);
\draw [black,fill, opacity=0.1](0,0,12) rectangle (4,3,12);
\draw [black](0,0,8) rectangle (4,3,8);
\draw [black,fill, opacity=0.1](0,0,8) rectangle (4,3,8);
\draw [black](0,0,4) rectangle (4,3,4);
\draw [black,fill, opacity=0.1](0,0,4) rectangle (4,3,4);


\end{scope}

\end{tikzpicture} \caption{The $\BC ^* \times\ZmodN{3}$ invariant curve $C(\lambda )$. The horizontal components (shown in blue) are thickened by the monomial ideal corresponding to the partition $\lambda =(3,1)$. The vertical components (shown in bold orange) consist of the curves $g(C_0\times \{x_0\})$ where $g\in \ZmodN{3}$. }
\label{fig: the curve C(lambda)}
\end{figure}

We let 
\[
\Hilb_{C(\lambda )}^{m}\subset \Hilb_{\vert ,C}^{F+|\lambda
|E,nN}(\widehat{C}\times E)^{\BC^{*}}
\]
be the component parameterizing $\BC^{*}\times \ZmodN{N}$-invariant
subschemes $Z$, supported on $\widehat{C}\times E$, containing
$C(\lambda)$, and such that $I_{C(\lambda )}/I_{Z}$ is a
zero-dimensional sheaf of length $mN$. In other words, the subscheme
$Z' = Z/\ZmodN{N}\subset X$ is obtained from the subscheme $C'(\lambda ) = C(\lambda
)/\ZmodN{N}\subset X$ by adding $m$ embedded points.

Let 
\[
\Hilb^{\bullet}_{C(\lambda )} = \sum_{m=0}^{\infty} \Hilb_{C(\lambda
)}^{m}\, y^{m}.
\]
We define a constructible morphism
\[
\rho :\Hilb^{\bullet}_{C(\lambda )} \to \Sym^{\bullet}(E')
\]
as follows.

Let $[Z]\in \Hilb^{m}_{C(\lambda )}$ be a closed point corresponding
to a $\ZmodN{N}\times \BC^{*}$ invariant subscheme $Z$ containing
$C(\lambda )$. Let $C'(\lambda )\subset Z'\subset X$ be the
corresponding subschemes in $X$ and let $F'_{Z}$ be the
length $m$, zero-dimensional quotient sheaf  $F'_{Z} = I_{C'(\lambda
)}/I_{Z'}$. Then define
\[
\rho :[Z]\mapsto \operatorname{supp}(\pi_{*}F_{Z}')
\]
where $\pi :X\to E'$ and support is given as a collection of points
with multiplicity.

We then have
\begin{align}\label{eqn: e(HilbC(lambda))=B(y)/A(y)}
e(\Hilb^{\bullet}_{C(\lambda )} )& = e\left(\Sym^{\bullet}E',\rho_{*}1 \right)\nonumber\\
&= e\left(\Sym^{\bullet}(E'-\{x_{0} \}),\rho_{*}1 \right)\cdot e\left(\Sym^{\bullet}\{x_{0} \},\rho_{*}1 \right)\nonumber\\
&=\left(\sum_{k=0}^{\infty}a(k)y^{k} \right)^{-1} \cdot \left(\sum_{k=0}^{\infty}b(k)y^{k} \right)
\end{align}
where
\begin{align*}
a(k) &= e\left(\rho^{-1}(kx) \right)\\
b(k) &= e\left(\rho^{-1}(kx_{0}) \right)
\end{align*}
and $x\in E'-\{x_{0} \}$. Here we have used the fact that
$\rho_{*}1$ satisfies the multiplicative property required by
equation~\eqref{eqn: formula for euler char of multiplicative
constructible functions on symmetric products} , over $\Sym
(E'-\{x_{0} \})$.

The preimages $\rho^{-1}(kx)$ and $\rho^{-1}(kx_{0})$ parameterize
subschemes obtained by adding $k$ $\BC^{*}$ invariant embedded points
to $C'(\lambda )$ in the fiber of $X\to E'$ over $x\in E'$ and
$x_{0}\in E'$ respectively. Such subschemes are determined formally
locally at the support of the embedded points and consequently can be
written in terms of the following local model.

Let $\lambda ,\mu ,\nu$ be a triple of $2D$ partitions and consider
the scheme $C_{\lambda \mu \nu}\subset \BA^{3}$ given by the ideal
$I_{\lambda \mu \nu}\subset \BC [x,y,z]$ where
\begin{align*}
I_{\lambda \mu \nu} & = I_{\lambda \emptyset \emptyset} \cap
I_{\emptyset \mu \emptyset}\cap I_{\emptyset \emptyset \nu} \\
I_{\lambda \emptyset \emptyset}&= (x^{i}y^{j})_{(i,j)\not\in \lambda }\\
I_{\emptyset  \mu \emptyset}&= (y^{j}z^{k})_{(j,k)\not\in \mu }\\
I_{\emptyset  \emptyset \nu}&= (z^{k}x^{i})_{(k,i)\not\in \nu }
\end{align*}

Let 
\[
\Quot^{m}_{\lambda \mu \nu} = \left\{I_{\lambda \mu \nu} \to Q,\quad
\operatorname{length}(Q)=m,\quad \operatorname{supp}(Q)=(0,0,0) \right\}
\]
be the Quot scheme of zero-dimensional, length $m$ quotients of
$I_{\lambda \mu \nu}$ supported at $(0,0,0)\in
\BA^{3}$. Hence $\Quot^{m}_{\lambda \mu \nu}$ parameterizes subschemes
obtained by adding $m$ embedded points to $C_{\lambda \mu \nu}$ at the
origin.

We define
\[
\Quot^{\bullet}_{\lambda \mu \nu} = \sum_{m=0}^{\infty}
\Quot^{m}_{\lambda \mu \nu} y^{m}
\]
and we define
\[
\Vtilde_{\lambda \mu \nu}(y) = e\left(\Quot^{\bullet}_{\lambda \mu \nu} \right).
\]
$\Vtilde_{\lambda \mu \nu}(y)$ is the normalized topological vertex
(see \cite{Bryan-Kool}).

Using formal local coordinates at the point in $C'(\lambda )$ where
embedded points are added, we can describe the preimages
$\rho^{-1}(kx)$ and $\rho^{-1}(kx_{0})$ in terms of 
\[
(\Quot^{\bullet}_{\lambda \emptyset \emptyset })^{\BC^{*}},\quad (\Quot^{\bullet}_{\lambda \square \emptyset })^{\BC^{*}},\quad (\Quot^{\bullet}_{\lambda \emptyset \square })^{\BC^{*}},\quad (\Quot^{\bullet}_{\lambda \square \square })^{\BC^{*}}
\]
where $\alpha\in\BC^{*}$ acts on $(x,y,z)$ by $(\alpha
x,\alpha^{-1}y,z)$. Specifically, we get
\begin{align*}
\sum_{k=0}^{\infty} \rho^{-1}(kx)\, y^{k} &= \left(\left(
\Quot^{\bullet}_{\lambda \emptyset \emptyset }\right)^{\BC^{*}}
\right)^{N},\\
&\quad \\[-10pt]
\sum_{k=0}^{\infty} \rho^{-1}(kx_{0})\, y^{k} &=\begin{cases}
\left(\left(
\Quot^{\bullet}_{\lambda \emptyset \emptyset }\right)^{\BC^{*}}
\right)^{N-2} \left(
\Quot^{\bullet}_{\lambda \square \emptyset }\right)^{\BC^{*}}
\left(
\Quot^{\bullet}_{\lambda \emptyset \square }\right)^{\BC^{*}}
&\text{ if $N\geq 2$,}\\
&\quad \\
\quad \left(
\Quot^{\bullet}_{\lambda \square \square }\right)^{\BC^{*}}&\text{ if $N=1$.}
\end{cases}
\end{align*} 
Indeed, in each case we are adding $\BC^{*}$ invariant embedded points
to $C'(\lambda )$ at the $N$ points in the fiber of $X\to E'$
corresponding to the $N$ nodes of the $I_{N}$ fiber (the $\BC^{*}$
fixed points). Over $x\neq x_{0}$ at the $N$ points, $C'(\lambda )$ is
formally locally given by $C_{\lambda \emptyset \emptyset}$. Over
$x_{0}$, $C'(\lambda )$ has a vertical component which meets 2 of the
nodes (if $N\geq 1$) so that at these nodes, $C'(\lambda )$ is
formally locally $C_{\lambda \square\emptyset }$ and $C_{\lambda
\emptyset \square }$ respectively. In the case of $N=1$, the vertical
component itself has a node and $C'(\lambda )$ is locally $C_{\lambda
\square\square }$.

Applying Euler characteristics and equation~\eqref{eqn:
e(HilbC(lambda))=B(y)/A(y)} we get
\[
e(\Hilb^{\bullet}_{C(\lambda )}) =
\begin{cases}
\Vtilde_{\lambda \square \emptyset} \cdot \Vtilde_{\lambda \emptyset
\square }\cdot \Vtilde^{-2}_{\lambda \emptyset \emptyset}
&\quad N\geq 2\\
&\quad \\
\Vtilde_{\lambda \square \square }\cdot \Vtilde^{-1}_{\lambda \emptyset \emptyset} &\quad N=1
\end{cases}
\]

We wish to rewrite the above in terms of the vertex with the usual
normalization. This works out nicely when we reindex by our subschemes
by holomorphic Euler characteristic instead of number of embedded
points. Using the normalization exact sequence for $C'(\lambda )$, we
can compute
\[
\chi (\CO_{C'(\lambda )}) = -\lambda_{1}-\lambda_{1}'+1-\delta_{N,1}.
\]
From \cite[Lemma~17]{Bryan-Kool} we have
\[
\Vtilde_{\lambda \emptyset \emptyset} =\Vsf_{\lambda \emptyset
\emptyset},\, \, \, 
\Vtilde_{\lambda \square \emptyset} =y^{\lambda_{1}}\cdot \Vsf_{\lambda
\square \emptyset},\, \, \, 
\Vtilde_{\lambda \emptyset  \square} =y^{\lambda'_{1}}\cdot \Vsf_{\lambda
\emptyset \square},\, \, \, 
\Vtilde_{\lambda \square \square} =y^{\lambda_{1}+\lambda '_{1}-1}\cdot \Vsf_{\lambda \square \square}
\]
and so we get
\[
\sum_{m=0}^{\infty} e\left(\Hilb^{m}_{C(\lambda )} \right) \, y^{\chi (\CO_{C'(\lambda )})+m} =
\begin{cases}
y\cdot \Vsf_{\lambda \square \emptyset} \cdot \Vsf_{\lambda \emptyset
\square }\cdot \Vsf^{-2}_{\lambda \emptyset \emptyset}
&\quad N\geq 2\\
&\quad \\
y\cdot \Vsf_{\lambda \square \square }\cdot \Vsf^{-1}_{\lambda \emptyset \emptyset} &\quad N=1
\end{cases}
\]
We define
\[
\Elambda (y) =y^{\frac{1}{2}} \frac{\Vsf_{\lambda \square \emptyset
}}{\Vsf_{\lambda \emptyset \emptyset }} = \sum_{i=1}^{\infty}
y^{-\lambda_{i}+i-\frac{1}{2}}.  
\]
Then using \cite[Proof of Lemma 5]{Bryan-Kool-Young} we may express
$y\cdot \Vsf_{\lambda \square \emptyset} \cdot \Vsf_{\lambda \emptyset
\square }\cdot \Vsf^{-2}_{\lambda \emptyset \emptyset} $ and $y\cdot
\Vsf_{\lambda \square \square }\cdot \Vsf^{-1}_{\lambda \emptyset
\emptyset}$ in terms of $\Elambda$. Namely
\[
\sum_{m=0}^{\infty} e\left(\Hilb^{m}_{C(\lambda )} \right)\, y^{\chi (\CO_{C'(\lambda
)})+m} =\delta_{N,1} - \Elambda (y)\Elambda (y^{-1}). 
\]

We now use 
Conjecture 21 in \cite{Bryan-Kool}, applied to the subscheme
$C'(\lambda )\subset X$, to convert the Euler characteristic to
virtual Euler characteristic. In our context, the conjecture says that
\[
e_{\vir}(\Hilb^{m}_{C(\lambda )}) = - \nu (C'(\lambda ))
(-1)^{m} e(\Hilb^{m}_{C(\lambda )})
\]
where $\nu  (C'(\lambda ))$ is the value of the Behrend
function of $\Hilb (X)/E$ at the subscheme $ C'(\lambda )\subset X$
(the extra sign is because of the quotient by $E$).

\begin{lemma}\label{lem: behrend function at C(lambda)}
The value of the Behrend function of $\Hilb (X)/E$ at the point
$C'(\lambda )$ is given by
\[
\nu  (C'(\lambda )) = -(-1)^{\chi (\CO_{C'(\lambda )})}.
\]
\end{lemma}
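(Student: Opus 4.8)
The plan is to split the computation into an \emph{$E$-quotient sign}, which accounts for the overall minus, and a \emph{local Behrend computation} on the Calabi--Yau threefold, which produces the factor $(-1)^{\chi(\CO_{C'(\lambda)})}$.

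First I would record that $E$ acts on $\Hilb(X)$ with finite stabilizers along the relevant locus and that passing to the quotient drops the virtual dimension by $\dim E = 1$. Since the symmetric obstruction theory of $\Hilb(X)/E$ differs from that of $\Hilb(X)$ by the trivial rank-one Lie algebra of $E$, the Behrend functions are related by $\nu_{\Hilb(X)/E}(C'(\lambda)) = -\,\nu_{\Hilb(X)}(C'(\lambda))$. This is exactly the ``extra sign because of the quotient by $E$'' noted above, and it reduces the Lemma to the claim
\[ \nu_{\Hilb(X)}(C'(\lambda)) = (-1)^{\chi(\CO_{C'(\lambda)})}. \]

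Next I would use that the Behrend function is a formal-local invariant of the ambient scheme. Because $\pi : S\times E \to X$ is an \'etale $\ZmodN{N}$-cover and $\ZmodN{N}$ acts freely, a sufficiently small neighbourhood of $C'(\lambda)$ in $X$ lifts isomorphically to a fundamental piece $\widetilde{C} = C_{0}\times x_{0} \cup Z_{\lambda}\times E$ of $C(\lambda)$ in $S\times E$, so $\nu_{\Hilb(X)}(C'(\lambda)) = \nu_{\Hilb(S\times E)}(\widetilde{C})$. Formally locally at each point of $\widetilde{C}$ the threefold is $\BC^{3}$ with coordinates $(x,y,z)$, where $xy=0$ cuts out the two fibre branches through a node, $z$ is the $E$-direction, and the volume form is $dx\,dy\,dz$. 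In these coordinates $\widetilde{C}$ is the monomial subscheme given by $\lambda$ along the $z$-leg together with the axis modelling $C_{0}$, and it is invariant under the full Calabi--Yau torus $T = \{(\alpha,\beta,\gamma)\in(\BC^{*})^{3} : \alpha\beta\gamma = 1\}$ acting by $(x,y,z)\mapsto(\alpha x,\beta y,\gamma z)$. The $z$-scaling is an automorphism of the formal neighbourhood, so it acts on the formal moduli even though $E$ carries no global torus, and among $T$-fixed subschemes with the given legs and Euler characteristic the point $[\widetilde{C}]$ is \emph{isolated}, its monomial ideal being rigid.

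Then I would invoke the Behrend--Fantechi sign formula for isolated torus fixed points of a symmetric obstruction theory, $\nu = (-1)^{\dim T_{[\widetilde{C}]}\Hilb}$ with $T_{[\widetilde{C}]}\Hilb = \Ext^{1}(I_{\widetilde{C}},I_{\widetilde{C}})$, and it remains to identify the parity
\[ \dim_{\BC}\Ext^{1}(I_{\widetilde{C}},I_{\widetilde{C}}) \equiv \chi(\CO_{\widetilde{C}}) \pmod 2, \]
which I would read off from the vertex character: the $T$-character of the virtual tangent space is built from the same $\Vsf_{\lambda\,\cdot\,\cdot}$ data used to evaluate $e(\Hilb^{\bullet}_{C(\lambda)})$, and its regularized rank mod $2$ is governed by $\chi(\CO_{\widetilde{C}}) = -\lambda_{1}-\lambda_{1}'+1-\delta_{N,1}$. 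Combining with the $E$-quotient sign gives $\nu_{\Hilb(X)/E}(C'(\lambda)) = -(-1)^{\chi(\CO_{C'(\lambda)})}$. The main obstacle is this parity step: the local model is non-compact, so $\Ext^{1}(I_{\widetilde{C}},I_{\widetilde{C}})$ is infinite dimensional and one must pass to the relative/edge-regularized vertex of Bryan--Kool--Young, tracking the leg contributions and the node-gluing signs carefully; this is exactly where the normalization terms $-\lambda_{1}-\lambda_{1}'$ and the $N=1$ correction $\delta_{N,1}$ enter and where a sign slip is easiest. A clean alternative, if one assumes it, is to bootstrap the parity identity from the degree-zero calculation $\sum_{n} e_{\vir}(\Hilb^{n})q^{n} = M(-q)^{e(X)}$, which already encodes $\nu = (-1)^{\chi}$ on $T$-fixed points.
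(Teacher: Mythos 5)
Your reduction to $\Hilb(X)$ via the $E$-quotient sign is fine and agrees with the paper, but your route for the remaining local computation is genuinely different from the paper's and, as written, has a gap at its decisive step. The paper does not localize: it shows directly (by the deformation-theoretic methods of \cite[Sec.~9]{Bryan-Kool}) that $C'(\lambda)$ is a \emph{smooth} point of $\Hilb(X)/E$ of dimension $2|\lambda|-\lambda_1-\lambda_1'+\delta_{1,N}$, so that $\nu(C'(\lambda))=(-1)^{\dim}$, and this parity is exactly $-(-1)^{\chi(\CO_{C'(\lambda)})}$ by the normalization sequence computation $\chi(\CO_{C'(\lambda)})=-\lambda_1-\lambda_1'+1-\delta_{N,1}$. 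Note in particular that $C'(\lambda)$ is in general \emph{not} an isolated point of $\Hilb(X)/E$, so any localization argument must establish isolatedness in the torus-fixed locus, which already requires knowing the tangent space and its weights --- essentially the same work the paper's smoothness claim encodes.

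The two concrete problems with your version are these. First, the Behrend--Fantechi formula $\nu(P)=(-1)^{\dim T_P}$ requires an honest $\BC^{*}$-action on (a neighbourhood of $P$ in) the moduli space with $P$ isolated in the fixed locus and the symmetric obstruction theory equivariant; the $z$-scaling you invoke is only a symmetry of the formal local model of $X$ along $C'(\lambda)$ (the $E$-direction carries no global torus), and you give no argument that the Behrend--Fantechi conclusion transfers to a merely formal-local action. The only genuine torus available is the fiberwise $\BC^{*}$ coming from the splitting of the Mordell--Weil restriction over the $I_N$ fiber, and with that torus alone the isolatedness and the weight computation must be carried out. Second, and more seriously, the parity identity $\dim\Ext^1(I_{\widetilde C},I_{\widetilde C})\equiv\chi(\CO_{\widetilde C})\pmod 2$ is the entire content of the lemma in your approach, and you do not prove it --- you explicitly defer it to an unspecified regularized vertex-character computation and flag it as the place where a sign error is most likely. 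The suggested fallback via $\sum_n e_{\vir}(\Hilb^n)q^n=M(-q)^{e(X)}$ does not help: that statement concerns zero-dimensional subschemes and gives no information about points of $\Hilb(X)$ whose underlying cycle contains the one-dimensional subscheme $C'(\lambda)$. To close the argument along the paper's lines you should instead compute $\Hom(I_{C'(\lambda)},\CO_{C'(\lambda)})$ and show the relevant obstructions vanish, obtaining smoothness and the stated dimension.
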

\begin{proof}
Using the methods of \cite[Sec~9]{Bryan-Kool}, one can show that
$C'(\lambda )$ is a smooth point of $\Hilb (X)/E$ of
dimension $2|\lambda |-\lambda_{1}-\lambda_{1}'+\delta_{1,N}$.
\end{proof}

Assuming the conjecture then we get
\begin{align*}
\sum_{m=0}^{\infty} e_{\vir }\left(\Hilb^{m}_{C(\lambda )} \right)\, y^{\chi (\CO_{C'(\lambda
)})+m} &=-\sum_{m=0}^{\infty} e\left(\Hilb^{m}_{C(\lambda )} \right)\, (-y)^{\chi (\CO_{C'(\lambda
)})+m}\\
&=-\delta_{N,1} + \Elambda (-y)\Elambda (-y^{-1})\\
&=-\delta_{N,1} + \Elambda (p)\Elambda (p^{-1})
\end{align*}
where $p=-y$.

So then
\begin{align*}
e_{\vir}\left(\Hilb_{\vert ,C}^{F+\bullet
E,N\bullet}(\widehat{C}\times E)^{\BC^{*}} \right) &=
\sum_{\lambda} Q^{N|\lambda|} \sum_{m-0}^{\infty}
e_{\vir}\left(\Hilb^{m}_{C(\lambda )} \right) \, y^{\chi
(\CO_{C'(\lambda )})+m} \\
&= \sum_{\lambda} q^{N|\lambda |} \left(-\delta_{N,1} + \Elambda (p)\Elambda (p^{-1}) \right)\\
&=-\prod_{k=1}^{\infty} (1-q^{kN})^{-1}\cdot \left(\delta_{N,1} - F(p,p^{-1};q^{N}) \right)
\end{align*}
where $F(x_{1},\dotsc ,x_{n};q)$ is Block-Okounkov's $n$-point
function. $F(p,p^{-1};q)$ is evaluated in
\cite[\S~4]{Bryan-Kool-Young} and is given by
\begin{align*}
 F(p,p^{-1};q^{N}) & = \frac{-p}{(1-p)^2} - \sum_{d=1}^\infty \sum_{k|d}k(p^k+p^{-k})q^{Nd}\\
  &= -\wp(p,q^{N}) + \frac{1}{12} E_{2}(q^{N}).    
\end{align*}

This completes the proof of equation \eqref{eqn: formula for
evir(HilbvertC)} and hence the proof of Propostion~\ref{prop: evir(Hilb_{vert})}.

\subsection{Putting vertical and diagonal contributions together.}
We have
\begin{align*}
    \Ztsecondcoeff &= q^{-1} e_{\vir} \left( \Hilb^{\frac{1}{N}F+\bullet E',\bullet}(X)/E\right)\\
    &=q^{-1} e_{\vir} \left( \Hilb^{F+\bullet E,N\bullet}(S\times E)^\ZmodN{N}/E\right)\\
    &=q^{-1} e_{\vir} \left( \Hilb^{F+\bullet E,N\bullet}_\vert   + \Hilb^{F+\bullet E,N\bullet}_\diag\right).
\end{align*}

Then using Lemma~\ref{lem: evir(Hilb_{diag})} and Proposition~\ref{prop: evir(Hilb_{vert})} we get

\begin{multline*}
 \Ztsecondcoeff 
    = \frac{-1}{\Delta_N(q)}\cdot 24 \cdot \frac{\phi_1(N)}{\phi_2(N)} \cdot \left\{\wp (q^N) - \frac{1}{12}E_2(q^N) +\frac{1}{12} \delta_{1,N} \right\} +\\
     \ \quad \ \frac{-2}{\Delta_N(q)\phi_2(N)} \cdot \sum_{m|N} (E_2(q^m)-1)\mu (m)\\
    =\frac{2\phi_1(N)}{\Delta_N(q)\phi_2(N)} \cdot \left\{ -12\wp (q^N)+E_2(q^N)-\delta_{1,N} - \frac{1}{\phi_1(N)} \sum_{m|N} (E_2(q^m)-1)\mu(m)\right\}
\end{multline*}

Substituting $E_2(q^m) = \widetilde{\CE }_m(q)+\frac{1}{m}E_2(q)$ and using the facts that
\begin{align*}
    \sum_{m|N}\mu(m) &= \delta_{1,N}\\
    \sum_{m|N}\frac{\mu(m)}{m} &= \frac{\phi_1(N)}{N}
\end{align*}
we conclude
\begin{equation*}
    \Ztsecondcoeff = \frac{2\phi_1(N)}{\Delta_N(q)\phi_2(N)} \left\{-12\wp(q^N)+\widetilde{\CE }_N - \frac{1}{\phi_1(N)} \sum_{m|N} \widetilde{\CE }_m(q) \mu(m) \right\}
\end{equation*}
which completes the proof of Theorem~\ref{thm:intro2} \qed. 

\section{Lattice computations} \label{first_coefficient_in_q} We
present the proof of the following part of Theorem~\ref{thm:intro1}.
\begin{thm} \label{thm:first_coeff_in_q} Let $X$ be an order $N$ elliptic CHL model. Then
\[ \left[ \mathsf{Z}^X(q,t,p) \right]_{q^{-1}} = \frac{1}{ \Theta(t,p)^2 \cdot f_N(t^{1/N}) }. \]
\end{thm}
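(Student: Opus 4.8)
The plan is to extract the degree-zero part of the partition function, where the curves have degree $0$ over $E'$ and hence lie in a single fiber $D_e=\pi(S\times e)\cong S$ of $p_2:X\to E'$, and to evaluate it via the degeneration to $\mathrm{K3}\times\p^1$ together with the lattice geometry of the torsion section. The coefficient of $q^{-1}$ collects the classes $(\beta_h,0)$; since $\beta_h>0$ the stable-pairs and Hilbert-scheme counts agree, and the degeneration formula \eqref{deg_formula} specialized to $d=0$ expresses $\PT^X_{n,(\beta_h,0)}$ as $\tfrac1N\sum_{\tilde\gamma:\,P(\tilde\gamma)=\beta_h}$ of reduced invariants of the fiber K3 surface $S$ (the rubber collapses onto $S\times\{\mathrm{pt}\}$ when $d=0$, the residual $\ZmodN{N}=\langle e_0\rangle$ acting on the fiber through $g$). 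A short intersection computation using $\sigma_i^2=-2$, $\sigma_i\cdot\sigma_j=0$ and $\sigma_i\cdot F=1$ gives $\tfrac12\langle\beta_h,\beta_h\rangle=(h-1)/N$, so that $t$ grades these contributions by $t^{(h-1)/N}$.

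The key point is that the effective classes $\tilde\gamma$ with $P(\tilde\gamma)=\beta_h$ are governed by the torsion section. Writing $\tilde\gamma=\beta_h+\delta$ with $\delta$ in the coinvariant lattice $\Lambda_g=\ker(P)$, orthogonality of the projector gives $\tilde\gamma^2=\beta_h^2+\delta^2$. Since the reduced stable-pairs series of $S$ in a class $\tilde\gamma$ depends only on $\tilde\gamma^2$ by the Katz--Klemm--Vafa formula, summing over $\delta\in\Lambda_g$ produces a theta series of the negative-definite lattice $\Lambda_g$. Here I would invoke Garbagnati--van~Geemen--Sarti \cite{GS,GarS,GarS2}: after deforming to a generic elliptic K3 with $N$-torsion section the reducible fibers are all of type $I_N$ with $g$ rotating their $N$ components cyclically, so a single component $C_i$ satisfies $P(C_i)=\tfrac1N F$ while $P(F)=F$ for a smooth fiber. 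This is precisely the orbifold grading on $[S/\ZmodN{N}]$: the $I_N$-components sit over the stacky locus and advance $h$ in steps of $1$ (hence $t^{1/N}$), whereas the free locus advances $h$ in steps of $N$ (hence $t$).

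With the lattice data in hand the two ingredients assemble as follows. The section $\sigma_0$ supplies the mandatory $P(\sigma_0)$-part of every $\beta_h$; being a disjoint union of $(-2)$-curves its contribution is local along the sections, independent of $N$, and I would evaluate it by specializing to $N=1$. For $N=1$ the computation is the reduced theory of $S\times E$, where $\left[\mathsf{Z}^{S\times E}\right]_{q^{-1}}=1/(\Theta(t,p)^2\,\Delta_1(t))$ with $\Delta_1=\eta^{24}$ is the KKV formula, known from the proof of the Igusa cusp form conjecture \cite{ObPix2}. The remaining fiber-and-points factor is exactly the orbifold Hilbert scheme series $\sum_n q^{n-1}e(\Hilb^n([S/\ZmodN{N}]))=1/\Delta_N(q)$ of \eqref{Delta_N_def}, evaluated at $q=t^{1/N}$ through Lemma~\ref{lem: formula for e(Hilb(Y/G))} and the numbers $e_d$ read off from the fiber configuration. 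Combining the $N$-independent section factor $\Theta(t,p)^2$ with $\Delta_N(t^{1/N})^{-1}$ yields the claimed identity, with $f_N=\Delta_N$; this is the McKay-correspondence origin of $\Delta_N$.

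The hardest part will be the clean separation of the section factor from the fiber/orbifold factor while keeping the reduced virtual class and the Behrend weighting consistent across the stratification, together with the modular identity that the theta series of $\Lambda_g$ times the KKV series $1/(\Theta(t,p)^2\eta^{24})$ equals $1/(\Theta(t,p)^2\Delta_N(t^{1/N}))$. Equivalently, one must verify that the fractional $t^{1/N}$-grading produced by the $I_N$-components reassembles precisely into $\Delta_N(t^{1/N})$ as tabulated, and handle the imprimitive classes $\tilde\gamma$ through the fact that the KKV invariants depend only on $\tilde\gamma^2$. This modular bookkeeping, powered by the explicit lattice and fiber data of Garbagnati--Sarti, is where the real work lies, and the outcome is consistent with the $t\leftrightarrow q^N$ symmetry \eqref{Symmetry} predicted for the full answer.
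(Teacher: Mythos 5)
Your proposal follows essentially the same route as the paper's Section~\ref{first_coefficient_in_q}: specialize the degeneration formula \eqref{deg_formula} to $d=0$, evaluate the resulting rubber invariants of $S\times\p^1$ by deforming each $\tilde\gamma$ to an irreducible class and applying Kawai--Yoshioka, so that the contribution of $\tilde\gamma$ is $\bigl[\Delta(t)^{-1}\Theta(t,p)^{-2}\bigr]_{t^{\langle\tilde\gamma,\tilde\gamma\rangle/2}}$, and then sum over the coset of classes with $P(\tilde\gamma)=\beta_h$ using the orthogonal decomposition $\tilde\gamma=\beta_h+\delta$ to produce a shifted theta series of the coinvariant lattice, identified with $\Delta(t)/\Delta_N(t^{1/N})$ via the Garbagnati--Sarti fiber data. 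The paper carries out this last identification by an explicit computation with the shifted $A_{n_j}$ theta functions and the identity of Gyenge--N\'emethi--Szendr\H{o}i, whereas you invoke the McKay correspondence and the orbifold Hilbert scheme series \eqref{Delta_N_def}; the paper records exactly this alternative in the remark closing Section~\ref{first_coefficient_in_q}, so the two are interchangeable.

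Two of the difficulties you flag are not actually present. First, every $\tilde\gamma$ with $P(\tilde\gamma)=\beta_h$ satisfies $\tilde\gamma\cdot F=\beta_h\cdot F=1$ and is therefore automatically primitive, so no imprimitive KKV input is needed. Second, there is no need to ``separate the section factor from the fiber factor'' or to track the Behrend weighting across a stratification: once the degeneration formula has reduced everything to rubber invariants of $S\times\p^1$ in classes $(\tilde\gamma,0)$, the Kawai--Yoshioka formula delivers the full factor $\Delta(t)^{-1}\Theta(t,p)^{-2}$ uniformly for each class, and the $\Theta(t,p)^{-2}$ in the answer simply comes out of that formula rather than from a local analysis along the sections (the local/stratification picture you describe belongs to the vertex argument for the $t^{-1/N}$ coefficient, not to this computation). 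The genuine work is the one you correctly identify last: parametrizing the effective classes as $\sigma_0+aF+\alpha$ with $\alpha$ in the span of the $I_{n_j+1}$-fiber components, dropping effectivity using Riemann--Roch and the vanishing of negative-index coefficients of $\Delta^{-1}\Theta^{-2}$, and verifying that the resulting product of shifted $A_{n}$ theta functions reassembles into $\Delta(t)/\Delta_N(t^{1/N})$.
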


For the proof we apply the degeneration formula 
to reduce to the computation of a theta function of the coinvariant lattice.
We discuss the connection with the McKay correspondence in a remark.

\subsection{Proof of Theorem~\ref{thm:first_coeff_in_q}} \label{subsection:first_coefficient_in_q}
Applying the degeneration formula \eqref{deg_formula} to the left hand side of Theorem~\ref{thm:first_coeff_in_q} gives
\[
\left[ \mathsf{Z}^X \right]_{q^{-1}}
=
\frac{1}{N} \sum_{h \geq 0} \sum_{n \in \BZ} \sum_{ \substack{ \tilde{\beta} \in H_2(S,\BZ)_{>0} \\ P(\tilde{\beta}) = \beta_h }} t^{ \langle \beta_h, \beta_h \rangle/2 } (-p)^n
\PT^{S \times \p^1}_{n, (\tilde{\beta},0)}( 1 \otimes 1 ).
\]
Since the curve class $\beta_h$ is indivisible in $P(N_1(S))$
the classes $\tilde{\beta}$ in the third sum on the right are primitive.
By deformation invariance, we may hence evaluate the rubber invariant
$\PT^{S \times \p^1}_{n, (\tilde{\beta},0)}( 1 \otimes 1 )$ by deforming $(S,\tilde{\beta})$
to a pair $(S',\beta')$ such that $\beta'$ is an irreducible curve class.
If $\beta'$ is irreducible, every curve in $S' \times \p^1$ of class $(\beta',0)$ is contained in a fiber over a single point in $\p^1$.
Moreover, the moduli space of rubber stable pairs is isomorphic to the moduli space
of stable pairs on $S'$ in class $\beta'$:
\[ P_{n}^{\sim}(S' \times \p^1 / \{ S_0, S_{\infty} \}, (\beta',0)) \cong P_n(S',\beta'). \]
Since $P_n(S',\beta')$ is non-singular, its Behrend function takes the constant value $(-1)^{\dim P_n(S',\beta')}$. Hence
\[
\PT^{S \times \p^1}_{n, (\tilde{\beta},0)}( 1 \otimes 1 ) = \int_{P_n(S,\beta')} (-1)^{ \dim(P_n(S,\tilde{\beta})) } \dd{e}.
\]
By the Kawai--Yoshioka formula \cite{KY} (see also \cite{MPT}) we conclude\footnote{
Alternatively, \eqref{evaluation} follows from applying the rigidification lemma, the degeneration formula and localization.
See \cite{K3xP1} for similar arguments.}
\begin{equation} \label{evaluation}
\sum_n (-p)^n \PT^{S \times \p^1}_{n, (\tilde{\beta},0)}( 1 \otimes 1 )
=
\left[ \frac{1}{\Delta(t) \Theta(t,p)^2} \right]_{t^{\langle \tilde{\beta}, \tilde{\beta} \rangle}/2}. 
\end{equation}
Inserting this into the degeneration formula we obtain
\begin{equation} \label{evaluation2}
\left[ \mathsf{Z}^X \right]_{q^{-1}}
=
\frac{1}{N} \sum_{h \geq 0}
 \sum_{ \substack{ \tilde{\beta} \in H_2(S,\BZ)_{>0} \\ P(\tilde{\beta}) = \beta_h }}
t^{ \langle \beta_h, \beta_h \rangle/2 }
\left[ \frac{1}{\Delta(t) \Theta(t,p)^2} \right]_{t^{\langle \tilde{\beta}, \tilde{\beta} \rangle}/2}.
\end{equation}

We consider the set of effective curve classes $\tilde{\beta}$ of $S$ with $P(\tilde{\beta}) = \beta_h$ for some $h \geq 0$.
As in Section~\ref{sec:z0} we may assume that the fibration $S \to \p^1$ 
is generic and in particular all reducible fibers are of type $I_n$.
Let
\[ C_{i}^{(j)} \in \Pic(S),\ \  i=0, \ldots, n_j \]
be the classes of irreducible components of the $j$-th fiber (which is of type $I_{n_j+1}$).
We order the $C_{i}^{(j)}$
such that $C_0^{(j)}$ is the component which meets the zero section $\sigma_0$, and the matrix
\[ \left( \, \langle C_{i}^{(j)}, C_{k}^{(j)} \rangle \, \right)_{i,j=1}^{n_j}
\]
is the Cartan matrix of the negative $A_{n_j}$ lattice.
In particular, if $L$ is the lattice spanned by all $C_{i}^{(j)}$ for $i\geq 1$ then we have
\[ L \cong \bigoplus_{j} A_{n_j}(-1). \]

Every effective class $\tilde{\beta}$ satisfying $P(\tilde{\beta}) = \beta_h$ is of degree $1$ over the base of the elliptic fibration
and hence of the form
\[ \tilde{\beta} = \sigma_i + aF + \sum_{j} \sum_{i=1}^{n_j} d_{i,j} C_{i}^{(j)} \]
for some $i=0, \ldots, N-1$, $a \geq 0$ and $d_{i,j} \in \BZ$.
By translating by $\sigma_{i}$ (or rather the inverse operation) we can assume that $\tilde{\beta}$ is of the form
\begin{equation} \tilde{\beta} = \sigma_0 + aF + \sum_{j} \sum_{i=1}^{n_j} d_{i,j} C_{i}^{(j)}. \label{gdfgdf} \end{equation}

We make one more simplification. 
Consider any, not necessarily effective class $\tilde{\beta}$ of the form \eqref{gdfgdf}.
We have
\begin{equation}
\label{normsddas}
 \langle \tilde{\beta}, \tilde{\beta} \rangle = 2a - 2 + \langle \alpha, \alpha \rangle, \quad \alpha =
\sum_{j} \sum_{i=1}^{n_j} d_{i,j} C_{i}^{(j)}.
\end{equation}
If $\langle \tilde{\beta}, \tilde{\beta} \rangle \geq -2$, then the class $\tilde{\beta}$ is effective by the Riemann--Roch formula.
If $\langle \tilde{\beta}, \tilde{\beta} \rangle < - 2$
then we have
\[ \left[ \frac{1}{\Delta(t) \Theta(t,p)^2} \right]_{t^{\langle \tilde{\beta}, \tilde{\beta} \rangle/2}} = 0.
 \]
Hence in \eqref{evaluation2} we may drop the effectivity condition on the classes $\tilde{\beta}$
and sum over all $a \in \BZ$ and $\alpha \in L$. 
Putting both simplifications into \eqref{evaluation2} and accounting for overcounting $N$ times the classes \eqref{gdfgdf} by
canceling the $1/N$ factor we get
\begin{equation} \label{evaluation33}
\left[ \mathsf{Z}^X \right]_{q^{-1}}
=
\sum_{a \in \BZ} \sum_{\alpha \in L}
t^{ \langle P(\beta_{a,\alpha})), P(\beta_{a,\alpha}) \rangle/2 }
\left[ \frac{1}{\Delta(t) \Theta(t,p)^2} \right]_{t^{\langle \beta_{a,\alpha}, \beta_{a,\alpha} \rangle/2}}
\end{equation}
where $\beta_{a,\alpha} = \sigma_0 + aF + \alpha$.

Using
\[
\quad P(C_i^{(j)}) = \frac{1}{n_j+1} F
\]
for all $i$ and $j$ we have
\[ \frac{1}{2} \langle P(\beta_{a,\alpha}), P(\beta_{a,\alpha}) \rangle
=
-\frac{1}{N} + a + \sum_j \frac{1}{n_j+1} \sum_{i} d_{ij}. \]
Inserting this and \eqref{normsddas} into \eqref{evaluation33} then yields
\begin{align*}
\left[ \mathsf{Z}^X \right]_{q^{-1}}
& =
\sum_{\alpha \in L} t^{-\frac{1}{2} \langle \alpha, \alpha \rangle + \sum_j \frac{1}{n_j+1} \sum_i d_{ij} + \frac{N-1}{N}}
\sum_{a \in \BZ} t^{\frac{1}{2} \langle \alpha, \alpha \rangle + a - 1} 
\left[ \frac{1}{\Delta \Theta^2} \right]_{t^{\frac{1}{2} \langle \alpha, \alpha \rangle + a - 1}} \\
& =
\frac{1}{\Delta(t) \Theta(t,p)^2}
\cdot
t^{\frac{N-1}{N}}
\prod_j \sum_{d_j=(d_{ij})_{i}} t^{\frac{1}{n_j+1} \sum_{i} d_{ij}} t^{d_j^T C_{A_{n_j}} d_j}
\end{align*}
where we let $C_{A_n}$ denote the Cartan matrix of the $A_n$ Dynkin diagram.

Consider the following theta series of the $A_n$ lattice:
\[
\vartheta_{A_n,v_n}(q)
= \sum_{m = (m_1, \ldots, m_n) \in \BZ^n} q^{\frac{1}{2} (m+v_n)^T C_{A_n} (m + v_n)}
\]
where
\[ v_n = \frac{1}{n+1} C_{A_n}^{-1} \begin{pmatrix} 1 \\ \vdots \\ 1 \end{pmatrix}. 
\]
By a direct check using Table~\ref{Table_ell_fibers} we have\footnote{
A small calculation shows $\frac{1}{2} v_n^T C_{A_n} v_n = \frac{1}{24} \frac{n (n+2)}{n+1}$.
}
\[ \frac{N-1}{N} - \frac{1}{2} \sum_j v_{n_j}^T C_{A_{n_j}} v_{n_j} = 0. \]
So we conclude
\[
\left[ \mathsf{Z}^X \right]_{q^{-1}}
=
\frac{1}{\Delta(t) \Theta(t,p)^2}
\prod_j \vartheta_{A_{n_j}, v_{n_j}}(t).
\]
The claim now follows from Table~\ref{Table_ell_fibers} and the following identity
which is a special case of \cite[Thm.1.3]{GNS} (set $q_i = q^{1/(N+1)}$ for all $i$):
\[
\vartheta_{A_n,v_n}(q) = 
\frac{\eta(q)^{n+1}}{\eta(q^{1/(n+1)})}.
\]
\qed
\begin{figure}
\begin{longtable}{|c|c|}
\hline 
$N$& singular fibers \\ \hline 
$1$& $24I_1$ \\ 
$2$& $8 I_1 + 8 I_2$ \\ 
$3$& $6 I_1 + 6 I_3$ \\ 
$4$& $4 I_4 + 2 I_2 + 4 I_1$ \\ 
$5$& $4 I_5 + 4 I_1$ \\ 
$6$& $2 I_6 + 2 I_3 + 2 I_2 + 2 I_1$ \\
$7$& $3 I_7 + 3 I_1$ \\  
$8$& $2 I_8 + I_4 + I_2 + 2 I_1$ \\
\hline
\caption{The number and type of singular fibers of
a generic elliptic K3 surface $S \to \p^1$ with order $N$ section.
Taken from \cite[Table 1]{GarS2}.}
\label{Table_ell_fibers}
\end{longtable}
\end{figure}

\begin{rmk}
Under the variable change $p=e^{iu}$ consider the $u^{-2}$ coefficient of \eqref{evaluation33},
\begin{equation}
\left[ \mathsf{Z}^X \right]_{q^{-1}u^{-2}}
=
\sum_{a \in \BZ} \sum_{\alpha \in L}
t^{ \langle P(\beta_{a,\alpha})), P(\beta_{a,\alpha}) \rangle/2 }
\left[ \frac{1}{\Delta(t)} \right]_{t^{\langle \beta_{a,\alpha}, \beta_{a,\alpha} \rangle/2}}
\end{equation}
By the GW/DT correspondence (Section~\ref{subsection:GWtheory}) this is precisely the genus $0$ contribution to the series
$[\mathsf{Z}^X]_{q^{-1}}$.
The sum on the right hand side arises also naturally from the McKay correspondence as follows.

Let $S$ be the elliptic K3 surface on which $G = \BZ_N$ acts by translating by an order $N$ section.
Let $S'$ be the crepant resolution of the coarse quotient $S/G$.
The lattice spanned by exceptional classes on $S'$ is isomorphic to $L$.
(This is a consequence of the equality of the
values in Table~\ref{Table_ell_fibers} and the table after Lemma~\ref{lem: formula for e(Hilb(Y/G))})
One can now show that under the McKay correspondence \cite{KV}
\[ \Phi : D^b([S/G]) \equiv D^b(\Coh_G(S)) \to D^b(S'), \]
where $[S/G]$ is the quotient stack, the generating series of Euler charistics
\[ \sum_{n=0}^{\infty} q^{n-1} e\left( \Hilb^n([S/G]) \right) \]
precisely corresponds to the right hand side in \eqref{evaluation33}.
Using \eqref{Delta_N_def} we hence recover the claim without proving
identities for the theta functions of the $A_n$ lattice. \qed
\end{rmk}

\section{Order two CHL models} \label{dfgadgs}
We expand the conjectures on order two CHL models by including also imprimitive classes.
A few base cases are discussed.

\subsection{Definition}
Let $g : S \to S$ be a symplectic involution of 
a non-singular projective K3 surface $S$,
and let
\[ X = (S \times E)/ \BZ_2 \]
be the associated CHL model.
Recall the projection operator
\[ P = \frac{1}{2} (1 + g_{\ast}) : H^{2}(S,\BQ) \to \frac{1}{2} H^2(S,\BQ). \]
In Section~\ref{subsec:order_two_CHL} we defined
the divisibility $\mathrm{div}(\gamma)$ of a class $\gamma \in P(N_1(S))$ to be the maximal positive integer $m$ such that
\[ \frac{\gamma}{m} \in P(N_1(S)). \]
Let $\tilde{\gamma} = \frac{\gamma}{\mathrm{div}(\gamma)}$. Then we say the class $\gamma$ is
\begin{itemize}
\item \emph{untwisted} if $\tilde{\gamma} \in H_2(S,\BZ)$,
\item \emph{twisted} if $\tilde{\gamma} \in \frac{1}{2} H_2(S,\BZ) \setminus H_2(S,\BZ)$.
\end{itemize}

Consider a curve class
\[ \beta = (\gamma, d) \in H_2(X,\BZ), \quad \gamma \neq 0. \]
If $\gamma$ is primitive,
then $\DT^X_{n, (\gamma,d)}$ only depends on
$n,d, s := \frac{1}{2} \langle \gamma, \gamma \rangle$ and whether $\gamma$ is twisted or not;
we have written
\[ \DT^X_{n, (\gamma,d)} = 
\begin{cases}
\DT^{\text{untw}}_{n, s, d} & \text{ if } \gamma \text{ is untwisted},\\
\DT^{\text{tw}}_{n, s, d}  & \text{ if } \gamma \text{ is twisted}.
\end{cases}
\]
A conjectural formula for the generating series of these primitive invariants was presented in Section~\ref{subsec:order_two_CHL}
as follows:
\begin{align*}
\mathsf{Z}^{\textup{tw}}(q,t,p) & = \frac{1}{\widetilde{\Phi_2}(Z)} \\
\mathsf{Z}^{\textup{untw}}(q, t,p) & = \frac{-8 F_4(Z) + 8 G_4(Z) - \frac{7}{30} E_{4}^{(2)}(2Z)}{\chi_{10}(Z)}.
\end{align*}

Here we present a multiple cover formula which expresses the Donald\-son--Thomas invariants
for imprimitive classes $\gamma$ in terms of the primitive invariants.
The conjecture is a direct consequence of
the multiple cover rule for K3 surfaces proposed in \cite[Conj.C]{K3xE}
and the computation scheme of Section~\ref{subsec:computation_scheme}.

\begin{conj} \label{Conj_Multiple_Cover} Let $\beta = (\gamma,d) \in H_2(X,\BZ)$ be a curve class.
\begin{enumerate}
\item If $\gamma$ is untwisted, then
\[ \DT_{n,(\gamma,d)} = \sum_{k | (n,\mathrm{div}(\gamma))} \frac{1}{k} \DT^{\textup{untw}}_{n/k, \frac{1}{2} \langle \gamma/k, \gamma/k \rangle, d} \]
\item If $\gamma$ is twisted, then
\[ 
\DT_{n,(\gamma,d)} 
= \sum_{\substack{k | (n,\mathrm{div}(\gamma)) \\ \mathrm{div}(\gamma)/k \textup{ even}}} \frac{1}{k} \DT^{\textup{untw}}_{\frac{n}{k}, \frac{1}{2} \langle \gamma/k, \gamma/k \rangle, d} 
+
\sum_{\substack{k | (n,\mathrm{div}(\gamma)) \\ \mathrm{div}(\gamma)/k \textup{ odd}}} \frac{1}{k} \DT^{\textup{tw}}_{\frac{n}{k}, \frac{1}{2} \langle \gamma/k, \gamma/k \rangle, d}
\]
\end{enumerate}
\end{conj}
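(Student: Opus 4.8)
The plan is to derive the formula conditionally from the multiple cover rule for reduced K3 invariants \cite[Conj.~C]{K3xE}, pushed through the computation scheme of Section~\ref{subsec:computation_scheme}; the statement is therefore only as strong as that conjecture, so I would also verify the low $\mathrm{div}(\gamma)$ cases directly as a consistency check. Throughout we may assume $\gamma>0$, so that $\DT^X_{n,(\gamma,d)}=\PT^X_{n,(\gamma,d)}$ by the Proposition identifying the quotient Donaldson--Thomas and Pandharipande--Thomas invariants, and every invariant in sight is defined.

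First I would apply the degeneration formula \eqref{deg_formula} with $N=2$ to write
\[
\DT^X_{n,(\gamma,d)} = \frac{1}{2} \sum_{\substack{\tilde\gamma \in H_2(S,\BZ)_{>0} \\ P(\tilde\gamma)=\gamma}} \PT^{S\times\p^1}_{n+d,(\tilde\gamma,d)}(\Gamma_g),
\]
and likewise express each building block $\DT^{\textup{untw}}_{n/k,s,d}$, $\DT^{\textup{tw}}_{n/k,s,d}$ on the right hand side as the analogous sum of rubber invariants over the preimages under $P$ of a genuinely primitive class of self-intersection $2s$. This reduces the identity to a purely K3 statement about the rubber invariants of $S\times\p^1$.

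Second I would transfer \cite[Conj.~C]{K3xE} to the rubber Pandharipande--Thomas invariants via the (conjectural) GW/PT and PT/Hilb correspondences built into the computation scheme. That rule governs how the invariant of an imprimitive class $\tilde\gamma$ of divisibility $M=\mathrm{div}_{H_2(S,\BZ)}(\tilde\gamma)$ reduces to genuinely primitive rubber data: one sums over divisors $k\mid M$ with weight $1/k$, dividing the self-intersection by $k^2$ and the reduced Euler characteristic by $k$. Since the reduced Euler characteristic of the rubber in class $(\tilde\gamma,d)$ is $(n+d)-d=n$, the reduction acts precisely on $n$; this is the origin both of the condition $k\mid n$ and of the argument $\tfrac12\langle\gamma/k,\gamma/k\rangle=\tfrac{1}{2k^2}\langle\gamma,\gamma\rangle$ in the building blocks.

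The crux, and the main obstacle, is the combinatorial matching once both sides have been fully expanded into primitive rubber data: one must relate $\mathrm{div}_{H_2(S,\BZ)}(\tilde\gamma)$ to $\mathrm{div}(\gamma)$ in $P(N_1(S))$ and sort the primitive contributions into the twisted and untwisted series. Using that $P$ is linear and that $\tilde\gamma\mapsto\tilde\gamma/k$ is a bijection between preimages of $\gamma$ divisible by $k$ in $H_2(S,\BZ)$ and preimages of $\gamma/k$, I would interchange summation to turn the double sum over $(\tilde\gamma,k)$ into $\sum_{k\mid(n,\mathrm{div}(\gamma))}\tfrac1k$ times the degeneration expression for the primitive invariant attached to $\gamma/k$. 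To decide which series occurs I would analyze the lattice position of $\gamma/k=(\mathrm{div}(\gamma)/k)\,\tilde\gamma_0$ with $\tilde\gamma_0=\gamma/\mathrm{div}(\gamma)$, exploiting the structure $\Lambda_g=E_8(-2)$: in the untwisted case $\gamma/k\in H_2(S,\BZ)$ for all $k$, while in the twisted case $\gamma/k\in H_2(S,\BZ)$ exactly when $\mathrm{div}(\gamma)/k$ is even and lies in $\tfrac12 H_2(S,\BZ)\setminus H_2(S,\BZ)$ when $\mathrm{div}(\gamma)/k$ is odd — precisely the even/odd split recorded in (2). Checking in each case that $\tfrac{1}{2k^2}\langle\gamma,\gamma\rangle$ lands in $\BZ$ (untwisted) or $\tfrac12\BZ$ (twisted) confirms the indices are consistent and completes the matching.
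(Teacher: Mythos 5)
Your derivation follows exactly the route the paper itself indicates: the statement is a conjecture, and the paper's only justification is the one-sentence assertion that it is "a direct consequence of the multiple cover rule for K3 surfaces proposed in \cite[Conj.C]{K3xE} and the computation scheme of Section~\ref{subsec:computation_scheme}," which is precisely the degeneration-to-rubber plus multiple-cover plus lattice-sorting argument you spell out (including the correct identification of the even/odd split of $\mathrm{div}(\gamma)/k$ with the untwisted/twisted cases via $\Lambda_g = E_8(-2)$). Since the paper supplies no further detail, your proposal is a faithful and somewhat more explicit version of the same (conditional) argument.
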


\subsection{Evidence}
We work with the following model. Let 
\[ R \to \p^1 \] be a rational elliptic surface
with 12 rational nodal fibers. Let 
\[ f : \p^1 \to \p^1 \]
be a degree $2$ map, branched away from the base points of singular fibers.
Consider the elliptic K3 surface $S \to \p^1$ defined by the fiber diagram
\[
\begin{tikzcd}
S \ar{r} \ar{d} & R \ar{d} \\
\p^1 \ar{r} & \p^1.
\end{tikzcd}
\]
The $E_8$ lattice of sections of $R$ induces an $E_8$ lattice of sections of $S$.
Let $\sigma_0 : \p^1 \to S$ be a fixed section which we declare as the zero section, and let $F \in \Pic(S)$ be the class of a fiber. 
The Picard lattice of $S$ is
\[ \Pic(S) \cong \begin{pmatrix} -2 & 1 \\ 1 & 0 \end{pmatrix} \oplus E_8(-2) \]
where the first summand corresponds to the lattice spanned by $\sigma_0$ and $F$,
and $E_8(-2)$ is the image of the section classes under orthogonal projection away from the first summand.

Let $\iota_1$ be the involution of $S$ which acts fiberwise by multiplication by $-1$.
Switching the two fibers of the degree $2$ covering $f : \p^1 \to \p^1$
induces another involution $\iota_2 : S \to S$.
The involutions $\iota_1$ and $\iota_2$ commute and their composition
\[ g = \iota_1 \circ \iota_2 : S \to S \]
is symplectic. The invariance and coinvariant lattices are
\[ \Lambda^g = \mathrm{Span}_{\BZ}(\sigma_0, F), \quad \Lambda_g = E_8(-2). \]
The curve classes
\[ \beta_h = \sigma_0 + h F, \quad h \geq 0 \]
are invariant, primitive and untwisted. Hence
\[ \DT^{X}_{n, (\beta_h, d)} = \DT^{\text{untw}}_{n, h-1, d} \]
where $X$ is the CHL model associated to $(S, g)$.

Both the vertex methods of Section~\ref{sec: vertex} and the lattice argument of Section~\ref{first_coefficient_in_q}
can be applied in a parallel way to the model $X$.
The vertex computation for class $\beta_0$ yields the evaluation
\[
\sum_{d=0}^{\infty} \sum_{n \in \BZ} \DT^{X}_{n, (\beta_0, d)} q^{d-1} (-p)^n
=
\frac{1}{2} \frac{1}{\Theta(q,p)^2 \Delta_2(q)}.
\]
Using the degeneration formula and the fact that the theta function of the $E_8$ lattice is the Eisenstein series $E_4(q)$
yields
\[
\sum_{h=0}^{\infty} \sum_{n \in \BZ} \DT^{X}_{n, (\beta_h, 0)} t^{d-1} (-p)^n
= 
\frac{1}{2} \frac{E_4(t^2)}{\Theta(t,p)^2 \Delta(t)}.
\]
Both computations match Conjecture~\ref{Conj_intro_2}.
Further evidence for Conjecture~\ref{Conj_intro_2} can be obtained from the computation scheme of Section~\ref{subsec:computation_scheme}.

\appendix

\section{Twisted-twined elliptic genera} \label{appendix_elliptic_genera}
We list the twisted-twined elliptic genera associated to symplectic automorphisms of K3,
and define their multiplicative lift.
This provides the necessary background for Conjecture~\ref{Conj_intro_1}.

\subsection{List}
Let $g : S \to S$ be a symplectic automorphism of a K3 surface $S$ of order $N$.
By Mukai \cite{Mukai} the automorphism defines (up to conjugacy) an element $g \in M_{24}$.
The conjugacy class of $g$ only depends on the order $N$.
Let
\[ F^{(r,s)}_N = \Ell_{g^r, g^s}(K3),\ \  r,s \in \{ 0,1, \ldots, N-1 \}. \]
denote the $g^r$-twisted $g^s$-twined elliptic genera in the sense of \cite{GPRV}.
We usually drop the subscript $N$ from notation and we take the indices $r,s$ modulo $N$.
The functions $F^{(r,s)}$ are Jacobi forms of weight $0$ and index $1$ for the group $\Gamma(N) \rtimes \BZ^2$.\footnote{
For a general element $g \in M_{24}$
the associated twisted-twined elliptic genera might have a character.
However, if $g$ lies in $M_{23}$,
for example it arises as in our case from a symplectic automorphism, the character is trivial.}
The functions satisfy
\[
F^{(r,s)}\left( \frac{a \tau + b}{c \tau + d}, \frac{z}{c \tau + d} \right)
=
\exp\left( 2 \pi i \frac{c z^2}{c \tau + d} \right) F^{(cs + ar, ds + br)}(\tau, z)
\]
for all 
$\binom{a\ b}{c\ d} \in \mathrm{SL}_2(\BZ)$.
Hence the vector $( F^{(r,s)} )_{r,s}$ is a vector-valued Jacobi form for the full group Jacobi group $\SL_2(\BZ) \rtimes \BZ^2$.

Below is a list of the elliptic genera which we have taken from \cite{CD}.
The conjugacy classes corresponding to an automorphism of order $N=1, \ldots, 8$ are denoted
by 
\[ 1A,\, 2A,\, 3A,\, 4B,\, 5A,\, 6A,\, 7A,\, 8A \]
in \cite[Table 1]{CD} respectively.
As explained in \cite[2.4]{CD} the computations of \cite{CD} match the construction of Gaberdiel et all in \cite{GPRV}.
We have also checked the matching of \cite{CD} with \cite[Table~3]{NMW}.
For the modular and Jacobi forms we will follow the notation of Section~\ref{section:modular_forms}.
We will also use
\[ A = \frac{1}{4} \phi_{0,1} = \sum_{i=2}^{4} \frac{ \vartheta_i(\tau, z)^2 }{\vartheta_i(\tau,0)^2},
\quad B = \phi_{-2,1} = \frac{\vartheta_1(\tau,z)^2}{\eta(\tau)^6}.
\]

\vspace{7pt} \noindent 
\textbf{Case $N \in \{ 1, 2,3,5,7 \}$.}
For all $1 \leq s, r \leq N-1$ and $0 \leq k \leq N-1$,
\begin{align*}
F^{(0,0)}(\tau,z)  & = \frac{8}{N} A(\tau,z) \\
F^{(0,s)}(\tau,z)  & = \frac{8}{N (N+1)} A(\tau,z) - \frac{2}{N+1} B(\tau,z) \CE_N(\tau) \\
F^{(r,rk)}(\tau,z) & = \frac{8}{N (N+1)} A(\tau,z) + \frac{2}{N (N+1)} B(\tau,z) \CE_N\left( \frac{\tau + k}{N} \right)
\end{align*}


\vspace{7pt} \noindent 
\textbf{Case $N=4$}. For all $s \in \{ 0,1,2,3 \}$,
\begin{align*}
F^{(0,0)}(\tau, z) &=2A(\tau, z) , \\ 
 F^{(0,1)}(\tau, z) &= F^{(0,3)}(\tau, z) =
 \frac{1}{4}\left[\frac{4A}{3}-B \left(-\frac{1}{3}\CE_2(\tau)+2\CE_4(\tau) \right)\right], \\ 
 F^{(1,s)}(\tau, z) &= F^{(3,3s)}=\frac{1}{4}\left[\frac{4A}{3}+
 B \left(-\frac{1}{6}\CE_2(\frac{\tau+s}{2})+\frac{1}{2}\CE_4(\frac{\tau+s}{4}) \right)\right], \\ 
 F^{(2,1)}(\tau, z) &= F^{(2,3)}=\frac{1}{4}\left(\frac{4A}{3}  -\frac{B}{3} (5 \CE_2(\tau)-6\CE_4(\tau)\right) ,\\ 
 F^{(0,2)}(\tau, z) &=\frac{1}{4}\left(\frac{8A}{3}-\frac{4B}{3}\CE_2(\tau)\right) ,\\ 
 F^{(2,2s)}(\tau, z) &=\frac{1}{4}\left(\frac{8A}{3}+\frac{2B}{3}\CE_2(\frac{\tau+s}{2})\right) .
\end{align*}

\vspace{7pt} \noindent
\textbf{Case $N=6$}.
\begin{align*}
F^{(0,0)}&=\frac{4}{3}A\\ 
F^{(0,1)} &=F^{(0,5)}= \frac{1}{6}\left[\frac{2A}{3}-B \left(-\frac{1}{6}\CE_2(\tau)
-\frac{1}{2}\CE_3(\tau)+\frac{5}{2}\CE_6(\tau) \right)\right],\\ 
 F^{(0,2)}&=F^{(0,4)}= \frac{1}{6}\left[2A-\frac{3}{2}B \CE_3(\tau)\right], \\ 
 F^{(0,3)}&= \frac{1}{6}\left[\frac{8A}{3}-\frac{4}{3}B \CE_2(\tau)\right]. \\
F^{(1, k)}&=F^{(5, 5k)}=\frac{1}{6}\left[\frac{2A}{3}+B \left(-\frac{1}{12}\CE_2(\frac{\tau+k}{2})-\frac{1}{6}\CE_3(\frac{\tau+k}{3})+\frac{5}{12}\CE_6(\frac{\tau+k}{6}) \right)\right], \\
%
F^{(2, 2k+1)}&= \frac{A}{9}+\frac{B}{36}\left[ \CE_3(\frac{\tau+2+k}{3}) + \CE_2(\tau) - \CE_2\left( \frac{\tau+k+2}{3} \right) \right], \\
F^{(4, 4k+1)}&= \frac{A}{9}+\frac{B}{36}\left[ \CE_3(\frac{\tau+1+k}{3}) + \CE_2(\tau) - \CE_2\left( \frac{\tau+k+1}{3} \right) \right], \\
F^{(3, 1)}&=F^{(3, 5)}= \frac{A}{9}-\frac{B}{12}\CE_3(\tau)-\frac{B}{72}\CE_2(\frac{\tau+1}{2})+\frac{B}{8}\CE_2(\frac{3\tau+1}{2}), \\ 
F^{(3, 2)}&=F^{(3, 4)}= \frac{A}{9}-\frac{B}{12}\CE_3(\tau)-\frac{B}{72}\CE_2(\frac{\tau}{2})+\frac{B}{8}\CE_2(\frac{3\tau}{2}), \\ 
 F^{(2r,2rk)}&= \frac{1}{6}\left[2A+\frac{1}{2}B \CE_3(\frac{\tau+k}{3})\right],   \\
  F^{(3,3k)}&= \frac{1}{6}\left[\frac{8A}{3}+\frac{2}{3}B \CE_2(\frac{\tau+k}{2})\right].
\end{align*}

\vspace{7pt} \noindent 
\textbf{Case $N=8$}. In case $N=8$ we have
\begin{align*}
F^{(0, 0)} &= A, \\ 
 F^{(0,1)}&= F^{(0,3)}= F^{(0,5)}= F^{(0,7)},\\ 
&=\frac{1}{8}
\left[\frac{2A}{3}-B \left(-\frac{1}{2}\CE_4(\tau)+\frac{7}{3}\CE_8(\tau) \right)\right]. \\
F^{(r, rk)} & =\frac{1}{8}\left[\frac{2A}{3}+\frac{B}{8} \left(-\CE_4(\frac{\tau+k}{4})+\frac{7}{3}\CE_8(\frac{\tau+k}{8})\right)\right], r=1,3,5,7 \\
 F^{(2,1)}&= F^{(6,3)}= F^{(2,5)}= F^{(6,7)},\\ 
&= \frac{1}{8}\left[\frac{2A}{3}+\frac{B}{3} \left(-\CE_2(2\tau)+\frac{3}{2}\CE_4(\frac{2\tau+1}{4}) \right)\right];\\ 
 F^{(2,3)}&= F^{(6,5)}= F^{(2,7)}= F^{(6,1)},\\ 
&= \frac{1}{8}\left[\frac{2A}{3}+\frac{B}{3} \left(-\CE_2(2\tau)+\frac{3}{2}\CE_4(\frac{2\tau+3}{4}) \right)\right]. \\
 F^{(0,2)}&= F^{(0,6)}=\frac{1}{8}\left(\frac{4A}{3}-B \left(-\frac{1}{3}\CE_2(\tau)+2\CE_4(\tau) \right)\right),\\ 
 F^{(0,4)}&=\frac{1}{8}\left(\frac{8A}{3}-\frac{4B}{3}\CE_2(\tau)\right),\\ 
 F^{(2,2s)}&= F^{(6,6s)}=\frac{1}{8}\left(\frac{4A}{3}+B \left(-\frac{1}{6}\CE_2(\frac{\tau+s}{2})+\frac{1}{2}\CE_4(\frac{\tau+s}{4}) \right)\right),\\ 
 F^{(4,4s)}&=\frac{1}{8}\left(\frac{8A}{3}+\frac{2B}{3}\CE_2(\frac{\tau+s}{2})\right),\\ 
 F^{(4,2)}&= F^{(4,6)}=\frac{1}{8}\left(\frac{4A}{3}-\frac{B}{3} (3\CE_2(\tau)-4\CE_2(2\tau)\right),\\ 
 F^{(4,2k+1)}&=\frac{1}{8}\left(\frac{2A}{3}+B \left(\frac{4}{3}\CE_2(4\tau)-\frac{2}{3}\CE_2(2\tau)-\frac{1}{2}\CE_4(\tau) \right)\right).
\end{align*}

%

%

\subsection{Multiplicative lift} \label{Section:twisted-twined} \label{sec:mult_lift}
We define the Borcherds or multiplicative lift of the twisted-twined elliptic genera
$F^{(r,s)}, r,s=0,\ldots, N-1$.

Consider the discrete Fourier transform
\[
\hat{F}^{(r,\ell)}(\tau,z)
=
\sum_{s=0}^{N-1}
e^{-2 \pi i s \ell/N}
F^{(r,s)}(\tau, z).
\]
Since $\hat{F}^{(r,s)}$ are Jacobi forms of index $1$ we have the expansion
\[
\hat{F}^{(r,\ell)}(\tau,z)
=
\sum_{b \in \{0,1 \}} \sum_{\substack{n \in \BZ/N \\ j \in 2 \BZ+b}} \hat{c}_b^{(r,\ell)}(4n-j^2) q^n p^j
\]

\begin{rmk}
By a direct check we have
\[ \hat{c}^{(0,0)}_{1}(-1) = 2, \quad \hat{c}^{(0,0)}_{0}(0) = 20 - |\Lambda_g|. \]
In particular the first coefficient of $\hat{F}^{(0,0)}$ is
\begin{align*}
\big[ \hat{F}^{(0,0)} \big]_{q^0} 
& = \hat{c}^{(0,0)}_{1}(-1) (p + p^{-1}) + \hat{c}^{(0,0)}_{0}(0) \\
& = \sum_{i,j} h^{i,j}(S/\BZ_N) (-1)^{i+j} p^i
\end{align*}
is the $\chi_{y}$ genus of the quotient $S/G$.
This generalizes the corresponding property of the elliptic genus $\Ell(K3)$. \qed
\end{rmk}

Let
\[ Z = \begin{pmatrix} \tau & z \\ z & \sigma \end{pmatrix} \]
be the standard coordinates on the Siegel upper half space and write
\[ q = e^{ 2 \pi i \tau}, \quad t = e^{ 2 \pi i \sigma}, \quad p = e^{2 \pi i z}. \]

\begin{defn}[\cite{DJS2}] \label{defn_multiplicative_lift}
The multiplicative lift of the twisted-twined elliptic genera
$( F^{(r,s)} )_{r,s = 0, \ldots, N-1}$ is defined by
\[
\widetilde{\Phi}_N(Z)
=
q t^{1/N} p
\prod_{b=0,1} \prod_{r=0}^{N-1} \prod_{ \substack{ k \in \BZ+ \frac{r}{N}, \ell \in \BZ \\ j \in 2 \BZ+b \\ k, \ell \geq 0, j<0 \text{ if } k=\ell=0}}
(1 - q^{\ell} t^k p^{j})^{\hat{c}_b^{(r,\ell)}(4 k \ell - j^2) }.
\]
\end{defn}
\vspace{5pt}

By \cite[Sec.3]{DJS2}, see also \cite{PV}, $\widetilde{\Phi}_N(Z)$ is a Siegel modular form for a certain congruence subgroup of $\Sp(4,\BZ)$ of weight
\[ \frac{1}{2} \hat{c}_0^{(0,0)}(0) \, = \, 10 - \frac{1}{2} | \Lambda_g | \, =\,  \left\lceil \frac{24}{N+1}\right\rceil -2.
\]
%
A consequence of the modularity is the $t \leftrightarrow q^N$ symmetry
\[ \widetilde{\Phi}_N(t^{1/N}, q^N, p) = \widetilde{\Phi}_N(q,t,p). \]
This may also seen directly as follows. 
By the explicit values in Appendix~\ref{appendix_elliptic_genera} the $\hat{F}^{(r,\ell)}$ are symmetric in $(r,\ell)$,
\[ \hat{F}^{(r,\ell)} = \hat{F}^{(\ell,r)} 
\ \  \text{ for all } r,\ell = 0, \ldots, N-1.
\]
Hence
\begin{equation} \label{sym_chat}
\hat{c}_b^{(r,\ell)}(D) = \hat{c}_b^{(\ell,r)}(D)
\ \  \text{ for all } r,\ell,b,D.
\end{equation}
This implies the symmetry by definition.

\section{Heterotic string and the duality group} \label{appendix2}
\vspace{5pt}
\begin{center}
By \textbf{Sheldon Katz}\footnote{
University of Illinois at Urbana-Champaign, Department of Mathematics,
Email: \url{katz@math.uiuc.edu}}
and \textbf{Georg Oberdieck}
\end{center}
\vspace{8pt}

In this appendix, our main goal is to explain the difference between the twisted and untwisted primitive invariants of order two CHL models in the context of physics. A secondary goal is to provide a cursory explanation of some of the physics background.  

We start with a discussion of several relevant ideas about dual string models and the duality group of the CHL model.  Although many of these ideas are necessarily relegated to a ``black box," we strive to formulate some of the ideas in precise mathematical language in the hopes that other mathematicians will be able to benefit from the ideas of physics as we have.

We will adopt the device of initially describing relevant concepts from physics in italics.  We will then selectively give some precise mathematical properties that these structures are supposed to have.  

\emph{String theory} is a 10-dimensional \emph{physical theory}, with variants including \emph{Type IIA} string theory, \emph{Type IIB} string theory, and \emph{Heterotic $E_8\times E_8$} string theory.  For brevity, we refer to these theories as IIA, IIB, and heterotic respectively.  String theory takes place on a 10-dimensional Lorentzian manifold $M^{10}$.  

String theory can be \emph{compactified} on a compact Riemannian manifold $X$ with a Ricci flat metric.  This means that we take $M^{10}=X\times M^d$, where $M^d$ is a Lorentzian manifold of dimension $d=10-\dim_{\mathbb{R}}(X)$.  By ``integrating out" the fields on $X$, we obtain an $d$-dimensional \emph{effective theory} on $M^d$, the physical spacetime of the theory.  The physical properties of the $d$-dimensional theory are determined by the geometry of $X$, so that calculations and theorems about the geometry of $X$ inform physics.  Conversely, ideas in physics such as dualities lead to non-trivial predictions about the geometry of $X$.  This two-way flow of information is at the core of the geometry-physics dictionary.

Consider $IIA[S\times E]$, IIA string theory compactified on $S\times E$.  Since $\dim_{\mathbb{R}}(S\times E)=6$, this is a 4-dimensional theory, and is in fact a \emph{4-dimensional $\mathcal{N}=4$ theory}. The $\mathcal{N}=4$ adjective describes the amount of \emph{supersymmetry}, as we now outline.

For simplicity, let's assume that our spacetime $M^d$ is $d$-dimensional Minkowski space.  Then the physical theory has a group of symmetries containing the isometry group of $M^d$.  At each $p\in M$ we have an induced Lie algebra of infinitesimal symmetries.  The $\mathcal{N}=4$ \emph{supersymmetry algebra} is a particular $\mathbb{Z}_2$-graded Lie algebra $\Fg=\Fg^0\oplus\Fg^1$ of infinitesimal symmetries, with $\Fg^0$ containing the infinitesimal isometries.  Here, $\mathcal{N}=4$ means $\dim(\Fg^1)=16$.\footnote{The $\mathbb{Z}_2$-graded Lie algebra $\Fg$ is not arbitrary but is constrained by physical principles.  The minimum value of $\dim(\Fg^1)$ in a 4-dimensional supersymmetric theory is 4, the dimension of a minimal real spin representation in signature $(3,1)$.  For any $\mathbb{Z}_2$-graded Lie algebra $\Fg=\Fg^0\oplus\Fg^1$, the even part $\Fg^0$ is an ordinary Lie algebra, and the odd part $\Fg^1$ is a $\Fg^0$-module.  In 4-dimensional minimal ($\mathcal{N}=1$) supersymmetry, $\Fg^1$ is the real spin representation $\Fs$.  In a 4-dimensional $\mathcal{N}=2$ theory we have $\Fg^1=\Fs^{\oplus 2}$ and in a 4-dimensional $\mathcal{N}=4$ theory we have $\Fg^1=\Fs^{\oplus4}$.  The dimension of $\Fg^1$ is called the number of supercharges of the theory.}  These supersymmetry algebras have precise mathematical definitions, see for example \cite{Freed}.  The amount of supersymmetry in a string compactification on a Calabi-Yau manifold $X$ is determined by the particular string theory used and the holonomy group of $X$. 
The holonomy group acts naturally on a fiber of the complexified spin bundle.  If the holonomy group acts trivially on a nonzero vector, this determines a covariantly constant spinor on $X$ which is used to construct a supersymmetry.
The holonomy group of $S\times E$ acts trivially on a 2-dimensional subspace, leading to $\mathcal{N}=4$ supersymmetry. By contrast, the holonomy group of a Calabi-Yau threefold $X$ only fixes a 1-dimensional space of spinors, so $\text{IIA}[X]$ only has half as much supersymmetry, $\mathcal{N}=2$.\footnote{
Another description is as follows.
Assume the Calabi--Yau threefold carries the action of an abelian variety of dimension $k$. Then
the corresponding $IIA$ theory is of type $\CN = 2^{k+1}$.}

Irreducible representations of $\Fg$ are completely classified.  Among these are the 1/2 BPS representations and 1/4 BPS representations.  To these respective representations correspond 1/2 (resp.\ 1/4) BPS states in the physical theory.

A key point is that reduced Donaldson-Thomas invariants of $S\times E$ can be directly related to 1/4 BPS invariants in the associated physical theory.  We will return to this point shortly.

There are other 4-dimensional $\mathcal{N}=4$ theories.  IIA and IIB theory have the same amount of supersymmetry in 10~dimensions.  It follows immediately that IIB$[S\times E]$ is also an $\mathcal{N}=4$ theory.

Now, the heterotic string in 10~dimensions has only half of the supersymmetry as IIA or IIB.  It follows that Het$[S\times E]$ is an $\mathcal{N}=2$ theory.  To get an $\mathcal{N}=4$ theory, we need to compactify the heterotic string on a manifold so that the holonomy acts trivially on the entire 4-dimensional space of spinors. An obvious choice is the flat 6-torus $T^6=(S^1)^6$ with trivial holonomy.  So Het$[T^6]$ is an $\mathcal{N}=4$ theory.

The assertion of heterotic-IIA duality is that 
\[ \text{IIA}[S\times E]= \text{Het}[T^6], \]
that is, these two 4-dimensional $\mathcal{N}=4$ theories are the same, albeit in a non-obvious way.  Also,
\[ \text{IIA}[S\times E]=\text{IIB}[S\times E] \]
by \emph{$T$-duality}.  These assertions have an enormous amount of content.  In the context of CHL models, these give predictions about their reduced DT invariants coming from calculations with no obvious relationship to DT theory or algebraic geometry.

To begin to extract some content, we next observe that the \emph{states} of a physical theory have \emph{charges}, which live in a \emph{charge lattice}.  The states also transform in a representation of the supersymmetry algebra as we have already mentioned.

Before discussing our 4-dimensional $\mathcal{N}=4$ theories, a more elementary example of a charge lattice is that the electric charges of the known elementary particles live in the electric charge lattice
\[
\Lambda_e=e\mathbb{Z}\subset\mathbb{R},
\]
where $e$ is the absolute value of the charge of the electron.  Charged particles interact with the photon, the force carrier which is described in Yang-Mills theory by a $U(1)$ \emph{gauge field}, identified with a connection on a principal $U(1)$ bundle on the 4-dimensional spacetime $M^4$. The \emph{electromagnetic field strength} is up to a scalar the curvature of the connection, $F\in\Omega^2_{M^4}$.  

In this formulation, Maxwell's equations take the simple form $dF=d*F=0$.  These equations are clearly invariant under the duality $F\to *F$, which underlies \emph{electric-magnetic duality}. When $F$ is expressed in terms of the electric field $\vec{E}$ and magnetic field $\vec{B}$, the duality transformation takes $\vec{E}$ to $-\vec{B}$ and $\vec{B}$ to $\vec{E}$.\footnote{In Lorentzian signature in 4~dimensions, we have $**=-\mathrm{Id}$ on 2-forms.} This means that if we would ever observe magnetic monopoles, their behavior in a magnetic field would be (with some sign differences) the same as the behavior of an electric monopole (charged particle) in an electric field, and the behavior of a magnetic monopole in an electric field would be the same as that of a charged particle in a magnetic field.  We say that an electrically charged particle has magnetic charge 0 and a magnetically charged particle has electric charge zero.  Magnetic charges are quantized (by the Dirac quantization condition), i.e.\ they also live in a rank~1 lattice.

In this theoretical framework, particles can have both electric and magnetic charge.  Such particles are called \emph{dyons}.  Their \emph{electromagnetic charge} lives in a rank~2 charge lattice, the direct sum of the electric and magnetic lattices.  Using the fundamental electric and magnetic charges to identify this lattice with $\BZ^2$, we can express the dyon charges as $(q,p)$, with $q$ units of electric charge and $p$ units of magnetic charge.  With this identification, the standard inner product on $\BZ^2$ provides a pairing on the electromagnetic lattice, which also has intrinsic physical meaning. Electric-magnetic duality extends an action of $\SL_2(\BZ)$ on the charge lattice $\BZ^2$.  We say that $\SL_2(\BZ)$ is the \emph{duality group}.  We emphasize that a duality transformation can transform \emph{all} of the fields in theory.  For example, the action of the duality transformation described by
\[
S=\left(
\begin{array}{cc}
0&-1\\
1&0
\end{array}
\right)
\] 
acts as
\[
\left(
\begin{array}{c}
\vec{E}\\
\vec{B}
\end{array}
\right)\mapsto
S\left(
\begin{array}{c}
\vec{E}\\
\vec{B}
\end{array}
\right)=
\left(
\begin{array}{c}
-\vec{B}\\
\vec{E}
\end{array}
\right)
\]
not only exchanges electric and magnetic charges (up to sign), $S(q,p)=(-p,q)$, but also exchanges electric monopoles with magnetic monopoles, and other physical quantities.  In extending these notions to string dualities, we sometimes relate physical quantities in one theory which have an algebro-geometric description to quantities in another theory which do not admit an algebro-geometric description.  

A Yang-Mills theory with gauge group $G=U(1)^r$ physically contains $r$ gauge fields and correspondingly has a rank $r$ electric charge lattice $\Lambda_e\simeq\mathbb{Z}^r$.  The components of the charge of a particle can be thought of as the electric charges of the particle with respect to the individual gauge fields.  Including magnetic charges, we get an electromagnetic charge lattice of rank $2r$.

In both IIA$[S\times E]$ and Het$[T^6]$, we have $G=U(1)^{28}$ (at generic points of the physical moduli space).  Without going in to details, one simply enumerates the fields in the 10~dimensional theory which appear as gauge fields in 4~dimensions after compactification.  This is a well-defined and simple computation in geometry.  In each case, we find 28 gauge fields, for very different reasons. 

The geometry-physics dictionary further identifies the electric charge lattice of IIA$[S\times E]$ with 
\begin{equation}\label{eqn:IIElec}
\Lambda_e=H^*(S,\BZ)\oplus U^2.
\end{equation}
The $U^2$ part of the lattice is associated with $E$ via \emph{momentum and winding modes} of the string wrapping the independent 1-cycles of $E$ (and can be identified with $H^*(E,\BZ)$). Thus $\Lambda_e\simeq E_8(-1)^2\oplus U^6$, the unique even self-dual lattice of signature $(6,22)$.

In this situation, the magnetic lattice $\Lambda_m$ is isomorphic to $\Lambda_e$.  Thus
\begin{equation}\label{eqn:IIEM}
\Lambda=\Lambda_e\oplus \Lambda_m=\left(H^*(S,\BZ)\oplus U^2\right)\oplus\left(H^*(S,\BZ)\oplus U^2\right)
\end{equation}
Identifying $\Lambda$ with $\Lambda_e\otimes\BZ^2$, the \emph{duality group} is $\mathrm{Isom}(\Lambda_e)\times\SL_2(\BZ)\simeq\mathrm{SO}(6,22,\BZ)\times\SL_2(\BZ)$, acting on $\Lambda$ in the obvious way. To each element $\sigma$ of the duality group, there is a (non-geometric) automorphism $f_\sigma$ of the physical theory, taking a BPS state $\rho$ with charge $Z(\rho)\in \Gamma$ to a BPS state $f_\sigma(\rho)$ with charge $\sigma \cdot Z(\rho)$. In this way, string theory reveals a much larger symmetry group than we are able to see in algebraic geometry proper, with powerful consequences for algebraic geometry.

The heterotic theory compactified on $T^6$ is a theory including $E_8\times E_8$ bundles on $T^6$.  In this case we have 
\begin{equation}\label{eqn:HetElec}
\Lambda_e=E_8(-1)^2\oplus U^6. 
\end{equation}
The $U^6$ part of the lattice is associated with momentum and winding modes of the string wrapping the independent 1-cycles of $T^6$.

Again, the magnetic lattice $\Lambda_m$ is isomorphic to $\Lambda_e$.  Thus
\begin{equation}\label{eqn:HetEM}
\Lambda=\Lambda_e\oplus \Lambda_m=
\left(E_8(-1)^2\oplus U^6\right)\oplus \left(E_8(-1)^2\oplus U^6\right).
\end{equation}
Remarkably, we immediately see that the respective electric charge lattices (\ref{eqn:IIElec}) and (\ref{eqn:HetElec}) of IIA$[S\times E]$ and Het$[T^6]$ are isomorphic.  Similarly, the corresponding electromagnetic charge lattices (\ref{eqn:IIEM}) and (\ref{eqn:HetEM}) are also isomorphic.  

We see that we obtain two 4-dimensional ${\mathcal N}=4$ theories with the same charge lattice, 
so it is a natural question to ask if these two theories are actually the same.  This question was asked more than 20 years ago (with more evidence than sketched above) and no contradictions have been found to date.  This is what is meant by heterotic-type II duality in our context.  We will refine the duality in the CHL context shortly, but for simplicity we continue to place our discussion in IIA$[S\times E]=$Het$[T^6]$ before passing to CHL.

We write the electromagnetic charges as $(Q,P)\in \Lambda_e\oplus \Lambda_m$.  The three quantities $Q^2, Q\cdot P, P^2$ are manifestly invariant under $\mathrm{Isom}(\Lambda_e)$ (and these quantities generate the ring of all invariants if $Q\wedge P$ is a primitive rank~2 lattice).  
DT invariants arise from D6-D2-D0 branes in IIA$[S\times E]$. A $Dp$-brane is a $p$-dimensional object moving in time.  BPS branes exist only for $p$ even in IIA and $p$ odd in IIB.  

A PT-pair $\CO\to\CF$ has a K-theory class.  Identifying K-theory with cohomology over the rationals we get components in $H_i(S\times E,\BQ)\simeq H^{6-i}(S\times E,\BQ)$ for $i\in\{6,2,0\}$ only.  This is the mathematical meaning of the D6-D2-D0 terminology.  After compactification of IIA on $S\times E$, we are left with a point particle in $M^4$ moving in time.  These particles are the charged BPS states in our theory.

The electric and magnetic charges $(Q,P)$ of the BPS states corresponding to $\DT^{S\times E}_{n,(\gamma,d)}$ have been spelled out in the physics literature. 
Let $e_1$ and $e_2$ denote the generators of the hyperbolic lattice as presented in Section~\ref{subsection_symplectic_auto}.  Then given a class $(\gamma,d)\in H_2(S\times E,\BZ)$, the electric and magnetic charges, and their invariants, are given by
\begin{equation}\label{eqn:SEcharges}
Q=(\gamma,ne_2,0),\quad P=(0,e_1+(d-1)e_2,0)
\end{equation}
with invariants
\[ Q^2=\gamma^2,\ P^2=2d-2,\ Q\cdot P=n.\]
The shift from $de_2$ to $(d-1)e_2$ in the component of the magnetic charge associated to $d\in H_2(E,\BZ)$ arises from the quotient by $E$ used in defining the reduced DT invariants.

For each $(Q,P)$, we can ask about the degeneracy\footnote{The degeneracy is an index in physics, defined as a supertrace in the relevant Hilbert space, roughly the difference between the dimensions of spaces of \emph{bosonic} and \emph{fermionic} states up to an omitted universal factor.} of 1/4-BPS states with charges $(Q,P)$.
If $Q \wedge P$ is a primitive rank~2 lattice,
the group $\mathrm{SO}(6,22;\BZ)$ acts transitively on the set of charges with
fixed $(Q^2,Q\cdot P,P^2)$. It follows that this index only depends on $Q^2,Q\cdot P,P^2$, so we write the degeneracy equivalently as $\mathsf{d}(Q,P)$ or $\mathsf{d}(Q^2/2,P^2/2,Q\cdot P)$.
We form the generating function
\[
Z(q,t,p)=\sum_{h = -1}^\infty
\sum_{k=-1}^\infty
\sum_{n\in\BZ}\mathsf{d}(h,k,n) t^h q^k (-p)^n.
\]



The degeneracies $\mathsf{d}(Q^2/2,P^2/2,Q\cdot P)$
have been computed using the IIB description.  The result is
\[
Z(q,t,p)=-\frac1{\chi_{10}(q,t,p)}.
\]
Passing back to the IIA description and using the charges (\ref{eqn:SEcharges}), we have
\[ \DT^{S\times E}_{n,(\beta_h,d)}=\mathsf{d}(h-1,d-1,n). \]
This leads immediately to the Igusa cusp form conjecture of DT theory on $S \times E$, proven in \cite{ObPix2,FM1}.  

A useful table for understanding the content of dualities appears in \cite[Table~3.1]{Cheng}, with conventions for electric charges and magnetic charges switched from ours.   We give two examples to show how far duality takes us outside of algebraic geometry.  For example, our D0-brane charge $n$ of DT theory corresponds to a \emph{momentum quantum number} of the string around one of the 1-cycles of $T^6$ in the heterotic theory.  Furthermore, even within IIA$[S\times E]$, we see that a $\mathrm{SO}(6,22;\BZ)$ transformation can take the D0-brane charge to non-geometric objects such as momentum quantum numbers around the 1-cycles of $E$.  This means that the full content of physical dualities cannot be understood within algebraic geometry proper.

We turn at last to our main interest, the CHL models.
Letting $X=(S\times E)/\BZ_N$, the CHL model is IIA$[X]$.  It is also a 4-dimensional $\mathcal{N}=4$ theory, with a heterotic dual Het$[T^6/\BZ_N]$.  The  electric charge lattice is \cite{NMW}
\begin{equation}\label{eqn:chle}
\Lambda_e=\left(H^*(S,\BZ)^g\right)^*\oplus U\oplus U\left(\frac1N\right)
\end{equation}
and the magnetic charge lattice is
\begin{equation}\label{eqn:chlm}
\Lambda_m=\Lambda_e^*=H^*(S,\BZ)^g\oplus U\oplus U\left(N\right).
\end{equation}
The electric and magnetic charge lattices are different in general, but we still have
\[ \Lambda = \Lambda_e \oplus \Lambda_m. \]
The degeneracies of several CHL models were determined in \cite{DJS2} including order 2 models, and the degeneracies for additional models are worked out in \cite{NMW} using IIB$[X]$ (which is dual to IIA$[X]$ via \emph{T-duality} on a particular 1-cycle of $E$ depending on the particular translation in $E$  used to construct the CHL model $X$).  The charges in the IIB theory are also described in \cite[Table~3.1]{Cheng}.  For the order two CHL model, the results are consistent with the twisted DT partition function but not with the untwisted DT partition function.  We conclude this appendix by explaining that there is no contraction with physics.

As in Section~\ref{subsection_symplectic_auto} of the main paper, let
\[ P = \frac{1}{N} \sum_{i=0}^{N-1} g^{i} : H^{\ast}(S,\BQ) \to H^{\ast}(S,\BQ) \]
be the projection operator. Then we have\footnote{If $\alpha = P(\alpha')$
for some $\alpha' \in H^{\ast}(S,\BZ)$ then $\langle \alpha, \beta\rangle = \langle \alpha', \beta \rangle$
for any $\beta \in (H^{\ast}(S,\BZ))^g$.
Hence $P( H^{\ast}(S,\BZ) ) \subset \left( H^{\ast}(S,\BZ)^g \right)^{\ast}$.
The converse follows since $\Lambda_{K3}^{\ast} \to
\left( H^{\ast}(S,\BZ)^g \right)^{\ast}$ is surjective.}
\[ P( H^{\ast}(S,\BZ) ) \cong \left( H^{\ast}(S,\BZ)^g \right)^{\ast}. \]

To specify a CHL model we fix a primitive vector $v \in H^1(E)$ of square zero and
let $\delta = \frac{1}{N} v \in \frac{1}{N} H^1(E)$.
The duality group of the CHL model, denoted $G_{\text{dual}}$, contains the product\footnote{
The exact duality group of the CHL model has not yet been fully determined.
In \cite{PV2} it is argued that the duality group $G_{\text{dual}}$ should be strictly bigger than $\Gamma_1(N) \times C(\hat{g})$, as it should contain the Fricke involution $(Q,P)\mapsto (-P,NQ)$. We will only consider the product $\Gamma_1(N) \times C(\hat{g})$ here. 
}
\[ \Gamma_1(N) \times C(\hat{g}) \subset G_{\text{dual}} \]
where
\[ \Gamma_1(N) = \left\{ \begin{pmatrix}a&b\\c&d\end{pmatrix} \middle| c\equiv 0, a,d \equiv 1 \text{ modulo } N \right\} \]
and $C(\hat{g})$ is the centralizer of the pair $\hat{g} = (g, \delta)$
in $\mathrm{SO}(6,22;\BZ)$, i.e.
\[ C(\hat{g}) =
\{ \varphi \in \mathrm{SO}(6,22;\BZ)\, |\, \varphi(\delta) = \delta, \ \varphi \circ g = g \circ \varphi \} \]
where we have extended the action of $g$ on $\Lambda_{K3}$
to an action on $\Lambda$ by letting it act as the identity on $U \oplus U(N)$.

Let now $(Q,P) \in \Lambda$ be a pair of electro-magnetic charges such that $Q \wedge P$ is a primitive vector in the lattice $\Lambda_e\wedge \Lambda_m := \Lambda_m^* \wedge \Lambda_m$. The dyon degeneracy $\mathsf{d}(Q,P)$ is invariant under the duality group and
we need to understand the orbits of $(Q,P)$ under the duality group.  In the case of $S\times E$, we have that $\mathrm{SO}(6,22;\BZ)$ acts transitively on the set of $(P,Q)$ with fixed $(Q^2,\ Q \cdot P,\ P^2)$ and $Q \wedge P$ primitive, hence the conclusion that the degeneracies are of the form $\mathsf{d}(Q^2/2,P^2/2,Q\cdot P)$.  These triples are not preserved by $\mathrm{SL}(2,\BZ)$, but this additional part of the duality group gives relations between the degeneracies.  For example, in the $S\times E$ case, the matrix $S\in \mathrm{SL}(2,\BZ)$ acts on charges via $S(Q,P)=(-P,Q)$, implying the relation $\mathsf{d}(Q^2/2,P^2/2,Q\cdot P)=\mathsf{d}(P^2/2,Q^2/2,-Q\cdot P)$.  This can be verified in DT theory as the symmetry $\chi_{10}^{-1}(q,t,p)=\chi_{10}^{-1}(t,q,p^{-1})$, which makes sense after a change in stability condition corresponding to an analytic continuation of the expansion of $\chi_{10}^{-1}$.

In the CHL case, we claim that there are distinct charges $(Q,P)$ with $Q \wedge P$ primitive
and with the same $(Q^2,\ Q \cdot P,\ P^2)$ which are not related by a duality transformation.
Hence the degeneracies need not be of the form $\mathsf{d}(Q^2/2,P^2/2,Q\cdot P)$, and more care is needed in drawing conclusions from a calculation in a dual physical model.  

For example, define the \emph{residue} of $(Q,P)$ to be
the class of $Q$ in $\Lambda_m^{\ast} / \Lambda_m$ where we have identified $\Lambda_e \equiv \Lambda_m^{\ast}$, i.e.
\[ r(Q,P) = [ Q ] \in \Lambda_m^{\ast} / \Lambda_m. \]
Then for $g = \binom{a\ b}{c\ d} \in \Gamma_1(N)$ we have
\[
r\left( g (Q,P) \right)
=
[ dQ - bP ] = [ Q ] = r(Q,P)
\]
where we have used that $N \Lambda_m^{\ast} \subset \Lambda_m$.
Moreover, since every $h \in C(\hat{g})$ arises from an automorphism of the unimodular lattice $\Gamma^{22,6}$ the map $h$ acts by the identity on the discriminant $\Lambda_m^{\ast} / \Lambda_m$. Hence
\[ r\left( h (Q,P) \right) = r\left( hQ,hP \right) = [hQ] = [Q] = r(Q,P). \]
Moreover, in the case $N=2$ the residue distinguishes between twisted and untwisted classes:
a primitive $\gamma \in P(H_2(S,\BZ))$ is untwisted (or twisted) depending on whether
its residue $r(\gamma)$ vanishes (or not).
A basic question is whether the residue of $(Q,P)$ is indeed invariant under the full duality group $G_{\text{dual}}$? More generally, we can ask:

\vspace{3pt}
\noindent \textbf{Problem.} Determine the full set of invariants
of the pair $(Q,P)$ for $Q \wedge P$ primitive  under the duality group.

\vspace{7pt}
The orbits of $(Q,P)$ with $Q \wedge P$ primitive under the duality group should correspond to the deformation classes of a pair of a CHL model together with a fixed ample primitive class on the K3.
Hence understanding the set of invariants is the first step towards
identifying the Donaldson--Thomas invariants of all CHL models.





\vspace{5pt}
\noindent \emph{Acknowledgements:} We would like to thank Nikita Nekrasov and Max Zimet for helpful correspondence.  S.K. was supported by NSF grants DMS-1502170 and DMS-1802242, together with DMS-1440140 while at MSRI during Spring 2018.


\begin{thebibliography}{10}
\bibitem{AI}
H.~Aoki, T.~Ibukiyama,
{\em Simple graded rings of Siegel modular forms, differential operators and Borcherds products},
Internat. J. Math. {\bf 16} (2005), no. 3, 249--279. 


\bibitem{Bialynicki-Birula}
A.~Bia{\l}ynicki-Birula.
\newblock Some theorems on actions of algebraic groups.
\newblock {\em Ann. of Math. (2)}, 98:480--497, 1973.


\bibitem{Bryan-K3xE} J.~Bryan, {\em The Donaldson--Thomas theory of $K3 \times E$ via the topological vertex}, 
in Geometry of moduli, Abel Symposium 2017, to appear.
\href{http://arxiv.org/abs/1504.02920}{arXiv:1504.02920}.

\bibitem{Banana} J.~Bryan, {\em The Donaldson-Thomas partition function of the Banana manifold}, in preparation.

\bibitem{Bryan-Kool}
J. Bryan and M. Kool.
\newblock {Donaldson-Thomas invariants of local elliptic surfaces via the
  topological vertex}.
\newblock arXiv:math/1608.07369.

\bibitem{Bryan-Kool-Young}
J. Bryan, M. Kool, and B. Young.
\newblock Trace identities for the topological vertex.
\newblock {\em Selecta Math. (N.S.)}, 24(2):1527--1548, 2018.
\newblock arXiv:math/1603.05271.


\bibitem{CD}
Aradhita Chattopadhyaya, Justin R. David
Dyon degeneracies from Mathieu moonshine
Phys. Rev. D 96, 086020 (2017)
\href{https://arxiv.org/abs/1704.00434}{arXiv:1704.00434}

\bibitem{123}
J.H.~Bruinier, G.~van der Geer, G.~Harder, D.~Zagier,
{\em The 1-2-3 of modular forms},
Lectures from the Summer School on Modular Forms and their Applications held in Nordfjordeid, June 2004.
Edited by Kristian Ranestad. Universitext. Springer-Verlag, Berlin, 2008. x+266 pp.

\bibitem{Cheng} M.~Cheng, The Spectra of Supersymmetric States in String Theory, Ph.D.\ Thesis, University of Amsterdam 2008, \href{https://arxiv.org/pdf/0807.3099.pdf}{arxiv:0807.3099}

%
%


\bibitem{DJS} J.~David, D.~Jatkar, A.~Sen,
J.~David, D.~Jatkar, A.~Sen,
{\em Product representation of dyon partition function in CHL models},
J. High Energy Phys. 2006, no. {\bf 6}, 064, 32 pp. 

\bibitem{DJS2}
J.~David, D.~Jatkar, A.~Sen,
{\em Dyon spectrum in generic $N=4$ supersymmetric $\BZ_N$ orbifolds}
J. High Energy Phys. 2007, no. {\bf 1}, 016, 31 pp.



\bibitem{DS} J.~David, A.~Sen,
{\em CHL dyons and statistical entropy function from D1-D5 system},
J. High Energy Phys. 2006, no. 11, 072, 40 pp. 


\bibitem{DM2}
J.~Duncan, S.~Mack-Crane,
{\em The moonshine module for Conway's group},
Forum Math. Sigma {\bf 3} (2015), e10, 52 pp.


\bibitem{EOT}
T.~Eguchi, H.~Ooguri, Y.~Tachikawa,
{\em Notes on the K3 surface and the Mathieu group M24},
Exp. Math. {\bf 20} (2011), no. 1, 91--96. 

\bibitem{EZ}
M.~Eichler, D.~Zagier,
{\em The theory of Jacobi forms},
Progress in Mathematics, 55. Birkhäuser Boston, Inc., Boston, MA, 1985. v+148 pp.

\bibitem{Freed}
D.~Freed,
\emph{Five Lectures on Supersymmetry}, AMS, Providence, RI, 1999.


\bibitem{GHV2}
M.~Gaberdiel, S.~Hohenegger, R.~Volpato,
{\em Symmetries of K3 sigma models},
Commun. Number Theory Phys. {\bf 6} (2012), no. 1, 1--50. 

\bibitem{GPRV}
M.~Gaberdiel, D.~Persson, H.~Ronellenfitsch, R.~Volpato,
{\em Generalized Mathieu Moonshine},
Commun. Number Theory Phys. {\bf 7} (2013), no. 1, 145--223. 

\bibitem{Gan}
T.~Gannon,
{\em Much ado about Mathieu},
Adv. Math. {\bf 301} (2016), 322--358. 


\bibitem{GarS}
A.~Garbagnati, A.~Sarti,
{\em Symplectic automorphisms of prime order on K3 surfaces},
J. Algebra {\bf 318} (2007), no. 1, 323--350. 

\bibitem{GarS2}
A.~Garbagnati, A.~Sarti,
{\em Elliptic fibrations and symplectic automorphisms on K3 surfaces},
Comm. Algebra {\bf 37} (2009), no. 10, 3601--3631. 

\bibitem{GS}
B.~van Geemen, A.~Sarti,
{\em Nikulin involutions on K3 surfaces},
Math. Z. {\bf 255} (2007), no. 4, 731--753. 

\bibitem{GH}
V.~Gritsenko, K.~Hulek,
{\em Minimal Siegel modular threefolds},
Math. Proc. Cambridge Philos. Soc. {\bf 123} (1998), no. 3, 461--485. 

\bibitem{GN} 
V.~A. Gritsenko and V.~V. Nikulin, \textsl{Siegel automorphic form corrections
  of some {L}orentzian {K}ac-{M}oody {L}ie algebras},
\newblock Amer. J. Math. \textbf{ 119} (1997), 181--224.

\bibitem{GNS}
\'A.~Gyenge, A.~N\'emethi, B.~Szendr\H{o}i,
{\em Euler characteristics of Hilbert schemes of points on simple surface singularities},
\href{https://arxiv.org/abs/1512.06848}{arXiv:1512.06848}.

\bibitem{Huy}
D.~Huybrechts,
{\em Lectures on K3 surfaces},
Cambridge Studies in Advanced Mathematics, {\bf 158}. Cambridge University Press, Cambridge, 2016.

\bibitem{Huy2}
Huybrechts, Daniel
{\em On derived categories of K3 surfaces, symplectic automorphisms and the Conway group},
Development of moduli theory--Kyoto 2013, 387--405, 
Adv. Stud. Pure Math., {\bf 69}, Math. Soc. Japan, Tokyo, 2016. 

\bibitem{IO}
T.~Ibukiyama, F.~Onodera,
{\em On the graded ring of modular forms of the Siegel paramodular group of level 2},
Abh. Math. Sem. Univ. Hamburg {\bf 67} (1997), 297--305. 

\bibitem{JS}
D.~Jatkar, A.~Sen
{\em Dyon spectrum in CHL models},
J. High Energy Phys. 2006, no. {\bf 4}, 018, 32 pp. 

\bibitem{KY}
T.~Kawai, K.~Yoshioka,
{\em String partition functions and infinite products},
Adv. Theor. Math. Phys. {\bf 4} (2000), no. 2, 397--485. 


\bibitem{Koblitz}
N.~Koblitz,
{\em Introduction to elliptic curves and modular forms},
Second edition. Graduate Texts in Mathematics, 97. Springer-Verlag, New York, 1993.

\bibitem{KL}
Y.-H.~Kiem and J.~Li,
{\em Localizing virtual cycles by cosections},
J. Amer. Math. Soc. {\bf 26} (2013), no. 4, 1025--1050.

%

\bibitem{KV}
M.~Kapranov, E.~Vasserot,
{\em Kleinian singularities, derived categories and Hall algebras},
Math. Ann. {\bf 316} (2000), no. 3, 565--576. 

\bibitem{MacPherson-Annals74}
R.~D. MacPherson.
\newblock Chern classes for singular algebraic varieties.
\newblock {\em Ann. of Math. (2)}, 100:423--432, 1974.


\bibitem{MP}
D.~Maulik, R.~Pandharipande,
{\em A topological view of Gromov-Witten theory},
Topology {\bf 45} (2006), no. 5, 887--918. 

\bibitem{MPT}
D.~Maulik, R.~Pandharipande, R.~P.~Thomas,
{\em Curves on K3 surfaces and modular forms},
With an appendix by A. Pixton. 
J. Topol. {\bf 3} (2010), no. 4, 937--996. 

\bibitem{Mukai}
S.~Mukai,
{\em Finite groups of automorphisms of K3 surfaces and the Mathieu group},
Invent. Math. {\bf 94} (1988), no. 1, 183--221. 

\bibitem{HilbK3} G. Oberdieck, \emph{Gromov--Witten invariants of the Hilbert scheme of points of a K3 surface},
Geom. Topol. {\bf 22} (2018), no. 1, 323--437.

\bibitem{K3xE}
G.~Oberdieck and R.~Pandharipande,
{\em Curve counting on $K3\times E$, the
Igusa cusp form $\chi_{10}$, and descendent integration},
in K3 surfaces and their moduli, C.~Faber, G.~Farkas, and G.~van der Geer, eds., Birkhauser Prog. in Math. 315 (2016), 245--278.

\bibitem{K3xP1} G.~Oberdieck, {\em Gromov--Witten theory of $\text{K3} \times \p^1$ and quasi-Jacobi forms},
Int. Math. Res. Not., 
rnx267, \url{https://doi.org/10.1093/imrn/rnx267}

\bibitem{Red} G.~Oberdieck,
{\em On reduced stable pair invariants}
Math. Z. {\bf 289} (2018), no. 1-2, 323--353. 


\bibitem{OS}
G.~Oberdieck and J.~Shen, \emph{Reduced {D}onaldson-{T}homas invariants and the ring of dual numbers},
Proc. Lond. Math. Soc., to appear, \href{https://arxiv.org/abs/1612.03102}{arXiv:1612.03102} .

\bibitem{ObPix}
G.~Oberdieck and A.~Pixton,
in preparation.


\bibitem{ObPix2}
G.~Oberdieck, A.~Pixton,
{\em Holomorphic anomaly equations and the Igusa cusp form conjecture},
Invent. Math. {\bf 213} (2018), no. 2, 507--587.


\bibitem{FM1}
G.~Oberdieck and J.~Shen,
{\em Curve counting on elliptic Calabi--Yau threefolds
via derived categories},
J. Eur. Math. Soc. (JEMS), to appear,
\href{https://arxiv.org/abs/1608.07073}{arXiv:1608.07073}.

\bibitem{PT1}
R.~Pandharipande and R.~P.~Thomas,
\newblock {\em Curve counting via stable pairs in the derived category},
Invent. Math. {\bf 178} (2009), no. 2, 407--447.


\bibitem{NMW}
N.~Paquette, R.~Volpato, M.~Zimet,
{\em No more walls! A tale of modularity, symmetry, and wall crossing for 1/4 BPS dyons},
J. High Energy Phys. 2017, no. 5, 047, front matter+76 pp.

\bibitem{PV}
D.~Persson, R.~Volpato,
{\em Second-quantized Mathieu moonshine},
Commun. Number Theory Phys. {\bf 8} (2014), no. 3, 403--509. 

\bibitem{PV2}
D.~Persson, R.~Volpato,
{\em Fricke S-duality in CHL models}, Jour.\ High Energy Phys.\ 2015(12).

\bibitem{SSY}
D.~Shih, A.~Strominger and X.~Yin, 
{\em Recounting Dyons in $N=4$ String Theory}
J.\ High Energy Phys.\ 10(2006).

\end{thebibliography}
\end{document}